\documentclass[10pt]{amsart}

\usepackage{comment}
\usepackage[noadjust]{cite}
\usepackage{hyperref}
\usepackage[capitalise,noabbrev]{cleveref}
\usepackage{amsmath}
\usepackage{amssymb}
\usepackage{enumitem}
\usepackage{kbordermatrix}
\usepackage{multicol}

\usepackage{mathabx}

\usepackage{todonotes}

\usepackage[labelformat=simple]{subcaption}
\usepackage{tikz}
\usepackage{tkz-graph}
\usepackage{array}
\usepackage{mathabx}

\makeatletter
\newcommand{\thickhline}{%
    \noalign {\ifnum 0=`}\fi \hrule height 1pt
    \futurelet \reserved@a \@xhline
}
\newcolumntype{"}{@{\hskip\tabcolsep\vrule width 1pt\hskip\tabcolsep}}
\makeatother

\tikzstyle{VertexStyle} = [shape = circle, draw, fill]
\tikzset{pre/.style={-}}    
\tikzstyle{every node}=[circle, inner sep=0pt, minimum width=4pt]
\usetikzlibrary{backgrounds}
\captionsetup[subfigure]{labelfont=rm}

\DeclareMathOperator{\Asc}{Asc}
\DeclareMathOperator{\Cr}{Cr}
\DeclareMathOperator{\si}{si}
\DeclareMathOperator{\co}{co}

\newcommand{\GF}{\mathrm{GF}}
\newcommand{\ba}{\backslash}
\newcommand{\AG}{\mathit{AG}}

\newcommand{\TQ}{\mathit{TQ}}
\newcommand{\spikey}{spikey $3$-separator}

\newtheorem{theorem}{Theorem}[section]
\newtheorem{lemma}[theorem]{Lemma}

\newtheorem{conjecture}[theorem]{Conjecture}
\newtheorem{corollary}[theorem]{Corollary}

\setenumerate{label=\rm(\roman*),midpenalty=2}

\let\originalleft\left
\let\originalright\right
\renewcommand{\left}{\mathopen{}\mathclose\bgroup\originalleft}
\renewcommand{\right}{\aftergroup\egroup\originalright}

\newcommand{\ignore}[1]{}
\newcommand{\Z}{\mathbb{Z}}
 
\begin{document}

\title[Excluded minors for matroids representable over partial fields]{Computing excluded minors for classes of matroids representable over partial fields}
\author{Nick Brettell} 
\address{School of Mathematics and Statistics\\
  Victoria University of Wellington\\
  Wellington\\
  New Zealand}
\email{nick.brettell@vuw.ac.nz}
\author{Rudi Pendavingh} 
\address{Department of Mathematics and Computer Science\\
  Eindhoven University of Technology\\
  Eindhoven\\
  The Netherlands}
\email{R.A.Pendavingh@tue.nl}
\thanks{Supported by the ERC consolidator grant 617951. %
The first author was also supported by a Rutherford Foundation postdoctoral fellowship and the New Zealand Marsden Fund.}

\maketitle

\begin{abstract}
    We describe an implementation of a computer search for the ``small'' excluded minors for a class of matroids representable over a partial field.
    Using these techniques, we enumerate the excluded minors on at most $15$ elements for both the class of dyadic matroids, and the class of $2$-regular matroids.
    We conjecture that there are no other excluded minors for the class of $2$-regular matroids; whereas, on the other hand, we show that there is a $16$-element excluded minor for the class of dyadic matroids.
\end{abstract}

\section{Introduction}

A minor-closed class of matroids can be characterised by its \emph{excluded minors}: the minor-minimal matroids that are not in the class.
Finding an excluded-minor characterisation for a class of matroids representable over a certain field or fields is an area of much interest to matroid theorists (see \cite{GGK2000,HMvZ2011} for recent examples).
A class of matroids representable over a set of fields can be characterised by representability over a structure known as a \emph{partial field}.
Two particular tantalising classes of matroids representable over a partial field, for which excluded-minor characterisations are not yet known, are dyadic matroids and $2$-regular matroids.
In this paper, we describe an implementation of a computer search for the ``small'' excluded minors for a class of matroids representable over a partial field.
This approach was used to enumerate, by computer, the excluded minors on at most $15$ elements for the class of dyadic matroids, and for the class of $2$-regular matroids.

Our first result from this computation is the following:
\begin{theorem}
  The excluded minors for dyadic matroids on at most $15$ elements are
  $U_{2,5}$, $U_{3,5}$, $F_7$, $F_7^*$,
  $\AG(2,3)\ba e$, $(\AG(2,3)\ba e)^*$, $(\AG(2,3)\ba e)^{\Delta Y}$, $T_8$, $N_{1}$, $N_2$, and $N_{3}$.
\end{theorem}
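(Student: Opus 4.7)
The plan is to verify this theorem by a systematic computer enumeration, using the fact that any excluded minor for the dyadic matroids is either already an excluded minor for $\GF(3)$-representability or is $\GF(3)$-representable but fails to be dyadic. Since the excluded minors for $\GF(3)$-representability are known (Bixby and Seymour--Kahn) to be $U_{2,5}, U_{3,5}, F_7, F_7^*$, which accounts for the first four matroids in the list, the remainder of the work reduces to finding all excluded minors for the dyadic class inside the class of ternary matroids on at most $15$ elements.

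The enumeration proceeds by building ternary matroids inductively via single-element extensions and coextensions from a small base case, keeping one representative from each isomorphism class. Known structure theorems for excluded minors of minor-closed classes representable over partial fields (in particular, that they are $3$-connected apart from a small list of degenerate cases) should be used to restrict the search to $3$-connected matroids, drastically pruning the tree. For each candidate ternary matroid $M$, the implementation described in the paper can test whether $M$ is dyadic by attempting to lift a $\GF(3)$-representation to a representation over the dyadic partial field. A candidate $M$ is then flagged as a potential excluded minor precisely when $M$ itself is not dyadic but $M\ba e$ and $M/e$ are dyadic for every $e \in E(M)$.

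The main verification has two directions. For the ``$\supseteq$'' direction, each of the eleven matroids in the list must be certified: that it is not dyadic (impossibility of a representation), and that every single-element deletion and contraction is dyadic (exhibiting explicit representations). For the ``$\subseteq$'' direction, one must argue that the enumeration is exhaustive, i.e., that no ternary, non-dyadic, $3$-connected matroid on at most $15$ elements has all proper minors dyadic beyond the ones already listed. This requires a careful audit of the isomorphism-class generation and the dyadic test, together with pruning arguments that allow the tree to be cut whenever a previously-found excluded minor appears as a minor of the current candidate.

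The main obstacle is computational feasibility rather than mathematical subtlety: the number of $3$-connected ternary matroids up to $15$ elements is enormous, and a naive enumeration together with a naive representability test is hopeless. The key ingredients that make the computation tractable are efficient canonical forms for matroids up to isomorphism, the ability to recognise nondyadic matroids early by propagating the known excluded minors as forbidden substructures, and an efficient routine to test partial-field representability that exploits stabilisers and fundamental-element structures of the dyadic partial field. The correctness of the final theorem therefore rests on the correctness of these routines, which the paper will need to justify alongside the raw enumeration output.
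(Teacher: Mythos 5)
Your high-level framework (computer enumeration plus a minimality check) matches the paper's, but the specific search strategy you describe --- enumerate all $3$-connected ternary matroids on at most $15$ elements, pruned by forbidden substructures, and test each for being dyadic --- is not the one the paper uses, and as stated it has a genuine gap: you correctly identify feasibility as the crux but do not supply the structural results that resolve it. The paper never enumerates ternary matroids. It enumerates only the $3$-connected \emph{dyadic} matroids having an $\{F_7^-,(F_7^-)^*,P_8\}$-minor (about $6.8$ million on $15$ elements), and considers as candidates only the ternary single-element extensions of these. The justification has three ingredients you omit: (a) the excluded-minor characterisation of near-regular matroids implies that every excluded minor for the dyadic class not among the seven small known ones has an $\{F_7^-,(F_7^-)^*,P_8\}$-minor (\cref{nou25u35dy}); (b) these three matroids are strong $\mathbb{D}$-stabilizers, which is what makes generating linear extensions of a single representation (via the $\GF(11)$ proxy with confined cross ratios) provably exhaustive for the dyadic catalogue; and (c) Seymour's Splitter Theorem guarantees that every large excluded minor $M$, or its dual, has a single-element deletion that is $3$-connected, ternary, dyadic, and retains the stabilizer minor, hence lies in the catalogue --- so $M$ really does appear among the generated extensions. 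Your proposed catalogue, by contrast, would include all $3$-connected ternary matroids with no known excluded minor as a minor, i.e.\ (modulo the theorem itself) all $3$-connected ternary dyadic matroids, including every regular and ternary near-regular one; this is a far larger set and is the reason the naive search you describe does not terminate in practice.

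A second omission is the $n=15$ step. Even with the stabilizer restriction, generating all ternary single-element extensions at $15$ elements is too expensive; the paper instead generates \emph{splices} of pairs of $14$-element catalogued matroids over a common $13$-element minor, and the exhaustiveness of that step is not a consequence of the Splitter Theorem alone --- it requires the $N$-detachable pairs theorem together with an analysis ruling out spikey $3$-separators in large excluded minors (\cref{detachthm,spikeys,splicinglemma}), and a pass through $\Delta$-$Y$ equivalence classes. Nothing in your proposal addresses why two elements can be removed simultaneously while staying $3$-connected with the required minor, so even granting unlimited computation your argument for exhaustiveness at $n=15$ is incomplete unless you genuinely enumerate every $3$-connected ternary matroid, which returns you to the feasibility problem.
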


With the exception of $N_3$, these matroids were previously known \cite[Problem~14.7.11]
{oxley}.
However, even this list is incomplete: we also found a $16$-element excluded minor that we call $N_4$.
We describe $N_3$ and $N_4$ in \cref{secdyadic}.

Our second result is the following:
\begin{theorem}
  \label{thm2}
  The excluded minors for $2$-regular matroids on at most $15$ elements are
$U_{2,6}$, $U_{3,6}$, $U_{4,6}$, $P_6$,
$F_7$, $F_7^*$, $F_7^-$, $(F_7^-)^*$, $F_7^=$, $(F_7^=)^*$,
$\AG(2,3)\ba e$, $(\AG(2,3)\ba e)^*$, $(\AG(2,3)\ba e)^{\Delta Y}$, $P_8$, $P_8^-$, $P_8^=$, and $\TQ_8$.
\end{theorem}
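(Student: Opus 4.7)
The proof is essentially computational, applying the search algorithm described in the body of the paper to the 2-regular partial field. My plan would proceed in three phases: enumeration of candidate excluded minors, certification that each candidate is genuinely an excluded minor, and reconciliation with the stated list.

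For the enumeration phase, I would build candidates by incremental single-element extension and coextension, starting from small 2-regular matroids. At each step I would (i) test 2-regular representability of the new matroid using the partial-field representability procedure developed earlier in the paper; (ii) prune whenever a proper minor has been shown to be non-2-regular, since then the matroid in question cannot be minor-minimal outside the class; and (iii) apply a canonical-form isomorphism check to avoid re-exploring the same matroid. Because the class of 2-regular matroids is closed under duality and under $\Delta$-$Y$ exchange, I would also quotient by these symmetries, tracking orbits rather than individual matroids to reduce the work. The search terminates at 15 elements, producing a finite list of isomorphism classes.

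For certification, each candidate $M$ must be shown to be non-2-regular while every single-element deletion and contraction of $M$ is 2-regular. Most items in the conclusion — $U_{2,6},U_{3,6},U_{4,6},P_6$, the Fano matroid and its dual, the Fano relaxations $F_7^-,F_7^=$ and their duals, $\AG(2,3)\ba e$ with its dual and $\Delta$-$Y$ image, $P_8,P_8^-,P_8^=,\TQ_8$ — already appear as excluded minors for neighbouring classes in the partial-fields literature, so these verifications largely reduce to invoking known arguments or to direct computation on small examples; one then matches the computer output against the 17 listed matroids up to duality and $\Delta$-$Y$ equivalence. The main obstacle is not any individual step but establishing confidence that the enumeration is exhaustive: this rests on the correctness of the partial-field representability test, the completeness of the generation by single-element extensions and coextensions, and the soundness of the canonical-form isomorphism check, each of which has to be justified separately (and which, in a practical implementation, should be cross-validated by rediscovering the previously known excluded minors on at most 14 elements as a sanity check before trusting the 15-element output).
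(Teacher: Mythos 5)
Your overall shape---generate candidates by single-element extensions, filter by representability and by known smaller excluded minors, filter isomorphs, certify what remains---is the same as the paper's. But you have left the crux of the argument as an acknowledged but unfilled hole: you write that the completeness of the generation ``has to be justified separately,'' and that justification is precisely the mathematical content of the proof. Without it, the computation shows only that the 17 listed matroids \emph{are} excluded minors, not that there are no others on at most 15 elements. The paper supplies three ingredients you are missing. First, \cref{nou25u35} splits off the excluded minors with no $\{U_{2,5},U_{3,5}\}$-minor (they are exactly the eight matroids in its part (ii)), so the search may be confined to matroids with a $\{U_{2,5},U_{3,5}\}$-minor; this matters because the completeness argument is relative to a fixed set of \emph{strong stabilizers}. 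Second, exhaustiveness of single-element extension/coextension is obtained from Seymour's Splitter Theorem in the form of \cref{seysplitcorr} (wheels and whirls are excluded because they are near-regular), together with the fact that $U_{2,5}$ and $U_{3,5}$ are strong $\mathbb{U}_2$-stabilizers, which guarantees that every representation of the smaller matroid extends and hence that generating linear extensions of stored representations misses nothing. \Cref{onlyquaternary} then restricts attention to quaternary extensions for $n\ge 9$.

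Third, and most importantly, for $n\in\{14,15\}$ the paper does \emph{not} generate all single-element extensions (this is infeasible at that size); it generates \emph{splices}, i.e.\ two-element extensions $M'$ of $(n-2)$-element matroids in the class such that both $M'\ba e$ and $M'\ba f$ are in the class. The guarantee that every excluded minor (up to duality and $\Delta$-$Y$ equivalence, using \cref{deltaYexc}) arises this way is \cref{splicinglemma}, whose proof rests on the $N$-detachable pairs theorem (\cref{detachthm}) and on ruling out spikey $3$-separators in large excluded minors (\cref{spikeys}). Nothing in your proposal plays this role, so your plan as stated is either incomplete as a proof (no argument that the candidate list is exhaustive) or, if you fall back on generating all single-element extensions at $n=14,15$, it abandons the device that makes the computation possible. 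You would need to either import the splicing machinery or give an independent argument that every 14- or 15-element excluded minor has, up to duality and $\Delta$-$Y$ exchange, two elements whose removal one at a time and together stays $3$-connected in the class with a $\{U_{2,5},U_{3,5}\}$-minor.
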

\noindent
The matroids $P_8^-$ and $\TQ_8$ are described in \cref{sec2reg}, whereas the others will be well known to readers familiar with the excluded-minor characterisations for $\GF(4)$-representable matroids \cite{GGK2000} and near-regular matroids \cite{HMvZ2011} (see also \cite{oxley}).

In the original version of this paper, we conjectured that this is the complete list of excluded minors for this class.
In fact, in recent work (appearing while this paper was under review), Brettell, Oxley, Semple and Whittle~\cite{BOSW2023a,BOSW2023b} proved that an excluded minor for the class of $2$-regular matroids has at most 15 elements.
Combining this result with \cref{thm2}, one obtains an excluded-minor characterisation of the class of $2$-regular matroids, which is the culmination of a long research programme~\cite{bww3,BCOSW2018,CMWvZ2015,CCCMWvZ2013,CCMvZ2015}.

The structure of this paper is as follows.  In the next section, we review preliminaries. 
In \cref{preconfine}, we introduce confined partial-field representations and describe how a representation over a partial field can be encoded by a representation over a finite field, with particular subdeterminants.
In \cref{impl}, we describe the implementation of the computation.  Rather than presenting the code (which we intend to make freely available), we focus on describing the implementation details that enabled us to search up to matroids on 15 elements using computer resources that are (more or less) readily available.  
In \cref{secdyadic,sec2reg}, we present our results for dyadic matroids and $2$-regular matroids, respectively.

\section{Preliminaries}
\label{prelims}

\subsection{Partial fields}
\label{prepf}

A \textit{partial field} is a pair $(R, G)$, where $R$ is a commutative ring with unity, and $G$ is a subgroup of the group of units of $R$ such that $-1 \in G$.
Note that $(\mathbb{F}, \mathbb{F}^*)$ is a partial field for any field $\mathbb{F}$.
If $\mathbb{P}=(R,G)$ is a partial field, then 
we write $p\in \mathbb{P}$ when $p\in G\cup \{0\}$, and $P\subseteq \mathbb{P}$ when $P\subseteq G\cup \{0\}$.

For disjoint sets $X$ and $Y$, we refer to a matrix with rows labelled by elements of $X$ and columns labelled by elements of $Y$ as an \emph{$X \times Y$ matrix}.
Let $\mathbb{P}$ be a partial field, and let $A$ be an $X\times Y$ matrix with entries from $\mathbb{P}$. Then $A$ is a $\mathbb{P}$-\textit{matrix} if every subdeterminant of $A$ is contained in $\mathbb{P}$. If $X'\subseteq X$ and $Y'\subseteq Y$, then we write $A[X',Y']$ to denote the submatrix of $A$ with rows labelled by $X'$ and columns labelled by $Y'$.
 
\begin{lemma}[{\cite[Theorem 2.8]{PvZ2010b}}]
\label{pmatroid}
Let $\mathbb{P}$ be a partial field, and let $A$ be an $X\times Y$ $\mathbb{P}$-matrix, where $X$ and $Y$ are disjoint sets. Let
\begin{equation*}
\mathcal{B}=\{X\}\cup \{X\triangle Z : |X\cap Z|=|Y\cap Z|, \det(A[X\cap Z,Y\cap Z])\neq 0\}. 
\end{equation*}
 Then $\mathcal{B}$ is the family of bases of a matroid on $X\cup Y$.
\end{lemma}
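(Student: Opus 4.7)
My plan is to verify the matroid basis-exchange axiom for $\mathcal{B}$, with matrix pivoting as the key tool. First I would reformulate the condition more concretely. Each $B \subseteq X \cup Y$ of size $|X|$ corresponds to $Z := X \triangle B$ with $|X \cap Z| = |X \setminus B|$ and $|Y \cap Z| = |B \setminus X|$, so the equality $|X \cap Z| = |Y \cap Z|$ is automatic from $|B| = |X|$. Thus $\mathcal{B}$ consists exactly of the $|X|$-element subsets $B$ of $X \cup Y$ with $\det(A[X \setminus B, B \setminus X]) \neq 0$, under the convention that the empty determinant equals $1$ (which places $X$ in $\mathcal{B}$). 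Nonemptiness of $\mathcal{B}$ is therefore immediate.

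Next I would establish a pivot lemma. Given $x \in X$, $y \in Y$ with $A_{x,y} \neq 0$ (hence $A_{x,y} \in G$, so invertible in $R$), define the Schur-complement pivot $A^{xy}$, an $X' \times Y'$ matrix with $X' = (X \setminus\{x\}) \cup \{y\}$ and $Y' = (Y \setminus\{y\}) \cup \{x\}$, with the standard entries $(A^{xy})_{y,x} = 1/A_{x,y}$, $(A^{xy})_{y,y'} = -A_{x,y'}/A_{x,y}$, $(A^{xy})_{x',x} = A_{x',y}/A_{x,y}$, and $(A^{xy})_{x',y'} = A_{x',y'} - A_{x',y}A_{x,y'}/A_{x,y}$. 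By Sylvester's (Schur-complement) determinantal identity, for any equal-cardinality $X'' \subseteq X'$ and $Y'' \subseteq Y'$ one has
\[
\det(A^{xy}[X'',Y'']) \cdot A_{x,y} = \pm\det(A[X\cap (X'' \triangle Y''), Y\cap (X'' \triangle Y'')]),
\]
the right-hand side being a subdeterminant of $A$ and hence in $\mathbb{P}$. Since $A_{x,y}\in G$ is invertible in $G \cup \{0\}$, the quotient lies in $\mathbb{P}$, and so $A^{xy}$ is a $\mathbb{P}$-matrix. Moreover, the same identity shows that the family $\mathcal{B}$ is invariant under pivoting (once we re-interpret the distinguished basis as $X'$); the correspondence $B \leftrightarrow B \triangle \{x,y\}$ preserves the nonzero-subdeterminant condition.

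With the pivot lemma in hand, I would verify basis exchange. Given $B_1, B_2 \in \mathcal{B}$ and $e \in B_1 \setminus B_2$, iterated pivoting (choosing successive nonzero entries inside $A[X \setminus B_1, B_1 \setminus X]$, which is nonsingular by definition of $B_1 \in \mathcal{B}$) transforms $A$ into an $X' \times Y'$ $\mathbb{P}$-matrix $A'$ whose distinguished basis is $B_1$, with the same family $\mathcal{B}$. The condition $B_2 \in \mathcal{B}$ then reads $\det(A'[B_1 \setminus B_2, B_2 \setminus B_1]) \neq 0$; expanding along the row indexed by $e$ produces some $f \in B_2 \setminus B_1$ with $A'_{e,f} \neq 0$, and a further pivot on $(e,f)$ shows $(B_1 \setminus \{e\}) \cup \{f\} \in \mathcal{B}$.

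The main obstacle is the pivot lemma, specifically proving that all entries and subdeterminants of $A^{xy}$ remain in $\mathbb{P}$. A priori, the pivoted entries are quotients of elements of $R$, and $\mathbb{P} = G \cup \{0\}$ is not closed under arbitrary ring quotients. The resolution is precisely the Sylvester identity above: each such quotient is expressible as a subdeterminant of $A$ divided by the unit $A_{x,y} \in G$, and $G \cup \{0\}$ is closed under multiplication by inverses of elements of $G$. Once this closure argument is made explicit, the pivot lemma follows, and the rest of the proof is routine.
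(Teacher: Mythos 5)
The paper offers no proof of this lemma; it is quoted from Pendavingh and Van Zwam \cite{PvZ2010b}, and your argument follows essentially the same route as the proof there: reformulate $\mathcal{B}$ as the $|X|$-sets $B$ with $\det(A[X\setminus B,\,B\setminus X])\neq 0$, show pivoting preserves both the $\mathbb{P}$-matrix property and $\mathcal{B}$ via the Schur/Sylvester determinant identity, then get exchange by pivoting to $B_1$ and Laplace-expanding along the row of $e$. One correction: your identity $\det(A^{xy}[X'',Y''])\cdot A_{x,y}=\pm\det(A[X\cap(X''\triangle Y''),\,Y\cap(X''\triangle Y'')])$ is not right as stated --- already for the $1\times 1$ matrix $A=[a]$ with $X''=\{y\}$, $Y''=\{x\}$ it would give $1=\pm a$; the exponent of $A_{x,y}$ and the index set depend on whether $y\in X''$ and $x\in Y''$, the uniform formulation being $\det(A^{xy}[X'',Y''])=\pm A_{x,y}^{-1}\det(A[X\setminus B,\,B\setminus X])$ for $B=((X\setminus\{x\}\cup\{y\})\setminus X'')\cup Y''$. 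This is a bookkeeping slip, not a gap: the corrected identity still exhibits every subdeterminant of $A^{xy}$ as a unit multiple of a subdeterminant of $A$, which is all your argument uses.
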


For an $X\times Y$ $\mathbb{P}$-matrix $A$, we let $M[A]$ denote the matroid in \cref{pmatroid}, and say that $A$ is a \emph{$\mathbb{P}$-representation} of $M[A]$.
Note that this is sometimes known as a reduced $\mathbb{P}$-representation in the literature; here, all representations will be ``reduced'', so we simply refer to them as representations.
A matroid $M$ is $\mathbb{P}$-\textit{representable} if there exists some $\mathbb{P}$-matrix $A$ such that 
$M \cong M[A]$.
We refer to a matroid $M$ together with a $\mathbb{P}$-representation $A$ of $M$ as a \emph{$\mathbb{P}$-represented} matroid.

For partial fields $\mathbb{P}_1$ and $\mathbb{P}_2$, we say that a function
$\phi : \mathbb{P}_1 \rightarrow \mathbb{P}_2$ is a \emph{homomorphism} if
\begin{enumerate}
  \item $\phi(1) = 1$,
  \item $\phi(pq) = \phi(p)\phi(q)$ for all $p, q \in \mathbb{P}_1$, and
  \item $\phi(p) +\phi(q) = \phi(p +q)$ for all $p, q \in \mathbb{P}_1$ such that $p +q \in \mathbb{P}_1$.
\end{enumerate}
Let $\phi([a_{ij}])$ denote $[\phi(a_{ij})]$.
The existence of a 
homomorphism from $\mathbb{P}_1$ to $\mathbb{P}_2$ certifies that $\mathbb{P}_1$-representability implies $\mathbb{P}_2$-representability:

\begin{lemma}[{\cite[Corollary 2.9]{PvZ2010b}}]
  \label{homomorphisms}
  Let $\mathbb{P}_1$ and $\mathbb{P}_2$ be partial fields and let $\phi : \mathbb{P}_1 \rightarrow \mathbb{P}_2$ be a 
  homomorphism.
  If a matroid is $\mathbb{P}_1$-representable, then it is also $\mathbb{P}_2$-representable.
  In particular, 
  if $A$ is a $\mathbb{P}_1$-representation of a matroid $M$, then $\phi(A)$ is a $\mathbb{P}_2$-representation of $M$.
\end{lemma}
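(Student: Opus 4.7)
The plan is to reduce the lemma to the key identity
\[\det(\phi(A')) = \phi(\det(A'))\]
for every square submatrix $A'$ of $A$, and to prove this by induction on the size $k$ of $A'$ via a Schur-complement reduction rather than a cofactor expansion.

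Two preliminary observations about $\phi$ are useful: applying (iii) to $0+0=0$ forces $\phi(0)=0$, and $\phi(g)\phi(g^{-1}) = \phi(1) = 1$ forces $\phi(g) \in G_2$ for every $g \in G_1$, so $\phi$ sends nonzero elements to nonzero elements. Given the identity above, every subdeterminant of $\phi(A)$ lies in $\phi(\mathbb{P}_1) \subseteq \mathbb{P}_2$, so $\phi(A)$ is a $\mathbb{P}_2$-matrix; moreover $\det(\phi(A'))=0$ if and only if $\det(A')=0$, so the basis families of \cref{pmatroid} coincide for $A$ and $\phi(A)$, yielding $M[\phi(A)] = M[A]$. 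The base case $k=1$ of the induction is immediate.

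For $k \ge 2$, the naive cofactor expansion $\det(A') = \sum_j (-1)^{1+j} a_{1j}\det(A'_{1j})$ is problematic: each term lies in $\mathbb{P}_1$ by induction, but the intermediate partial sums need not, so axiom (iii) cannot be iterated. I would sidestep this by using the Schur complement. If every entry of $A'$ is zero, both sides vanish; otherwise permute rows and columns so that a nonzero pivot $a_{11} \in G_1$ sits in the top-left corner (both determinants change sign in the same way). Then $\det(A') = a_{11}\det(A'/\{1\})$, where $A'/\{1\}$ is the $(k-1)\times(k-1)$ matrix with entries $(A'/\{1\})_{ij} = a_{ij} - a_{i1}a_{1j}/a_{11}$. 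The crucial observation is that $A'/\{1\}$ is itself a $\mathbb{P}_1$-matrix: for any row set $S$ and column set $T$, a further Schur-complement computation gives $\det((A'/\{1\})[S,T]) = \det(A'[\{1\}\cup S, \{1\}\cup T])/a_{11}$, a quotient of two elements of $\mathbb{P}_1$ and hence in $\mathbb{P}_1$. In particular each entry of $A'/\{1\}$ is the difference of two elements of $\mathbb{P}_1$ whose difference is again in $\mathbb{P}_1$, so one application of axiom (iii) per entry yields $\phi(A'/\{1\}) = \phi(A')/\{1\}$. Combining the Schur-complement formula applied to both $A'$ and $\phi(A')$ with the inductive hypothesis then gives
\[\det(\phi(A')) = \phi(a_{11})\,\phi(\det(A'/\{1\})) = \phi(a_{11}\det(A'/\{1\})) = \phi(\det(A')),\]
where multiplicativity (ii) supplies the middle step.

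The main obstacle is exactly the one that the Schur-complement reduction is designed to evade: a direct inductive argument via cofactor expansion would require axiom (iii) to push $\phi$ through a $k$-term sum, which it supports only if every partial sum happens to lie in $\mathbb{P}_1$---a condition not guaranteed by the $\mathbb{P}_1$-matrix hypothesis on $A$. Replacing the Laplace sum by a single pivot multiplication together with a smaller $\mathbb{P}_1$-matrix whose entries and subdeterminants are all certified to lie in $\mathbb{P}_1$ reduces every use of (iii) to a two-term sum that stays inside $\mathbb{P}_1$, which is precisely what the axiom can handle.
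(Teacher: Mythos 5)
Your argument is correct, and it is essentially the proof of the cited result \cite[Corollary 2.9]{PvZ2010b} (the paper itself only quotes the statement): Pendavingh and Van Zwam likewise establish $\det(\phi(A'))=\phi(\det(A'))$ by showing that pivoting preserves the $\mathbb{P}$-matrix property and commutes with $\phi$, precisely because the Laplace expansion is unavailable in a partial field. The only cosmetic point is that your induction hypothesis should be stated for all $(k-1)\times(k-1)$ $\mathbb{P}_1$-matrices rather than for submatrices of $A$, since $A'/\{1\}$ is not a submatrix of $A$ --- which is exactly why you (correctly) verify that the Schur complement is itself a $\mathbb{P}_1$-matrix.
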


Representability over a partial field can be used to characterise representability over each field in a set of fields.  Indeed,
for any finite set of fields $\mathcal{F}$, there exists a partial field~$\mathbb{P}$ such that a matroid is 
$\mathcal{F}$-representable
if and only if it is $\mathbb{P}$-representable \cite[Corollary~2.20]{PvZ2010a}.

  Let $M$ be a matroid.
  Pendavingh and Van Zwam described~\cite[Section~4.2]{PvZ2010b} the canonical construction of a partial field $\mathbb{P}_M$ with the property that for every partial field $\mathbb{P}$, the matroid $M$ is $\mathbb{P}$-representable if and only if there exists a homomorphism $\phi : \mathbb{P}_M \rightarrow \mathbb{P}$ (see also \cite{BL21b}).
  We call the partial field $\mathbb{P}_M$ the \emph{universal partial field of $M$}.

Let $\mathbb{P}=(R,G)$ be a partial field. 
We say that $p \in \mathbb{P}$ is \emph{fundamental} if $1-p \in \mathbb{P}$.
We denote the set of fundamentals of $\mathbb{P}$ by $\mathfrak{F}(\mathbb{P})$.
For $p \in \mathbb{P}$, the set of \emph{associates} of $p$ is
$$\Asc(p) = \begin{cases}\left\{p, 1-p, \frac{1}{p}, \frac{1}{1-p}, \frac{p}{p-1}, \frac{p-1}{p}\right\} & \textrm{if $p \notin \{0,1\}$}\\ \{0,1\} & \textrm{if $p \in \{0,1\}$.}\end{cases}$$
For $P \subseteq \mathbb{P}$, we write $\Asc(P) = \bigcup_{p \in P}\Asc(p)$.
If $p \in \mathfrak{F}(\mathbb{P})$, then $\Asc(p) \subseteq \mathfrak{F}(\mathbb{P})$.

Let $A$ and $A'$ be $\mathbb{P}$-matrices.  We write $A \preceq A'$ if $A$ can be obtained from $A'$ by the following operations: multiplying a row or column by an element of $G$,
deleting a row or column,
permuting rows or columns, 
and pivoting on a non-zero entry.
The \emph{cross ratios} of $A$ are $$\Cr(A) = \left\{p : \begin{bmatrix}1 & 1 \\ p & 1\end{bmatrix} \preceq A \right\}.$$

Any other undefined terminology related to partial fields follows Pendavingh and Van Zwam \cite{PvZ2010a,PvZ2010b}.
We note that although we work only at the generality of partial fields, this theory has been generalised by Baker and Lorscheid~\cite{BL20,BL21}.

\subsection{Partial fields of note}

The \emph{dyadic} partial field is
$\mathbb{D} = \left(\mathbb{Z}\left[\frac{1}{2}\right], \left<-1,2\right>\right)$.
We say a matroid is \emph{dyadic} if it is $\mathbb{D}$-representable.
A matroid is dyadic if and only if it is both $\GF(3)$-representable and $\GF(5)$-representable.
Moreover, a dyadic matroid is representable over every field of characteristic not two \cite[Lemma~2.5.5]{vanZwam2009}.

The \emph{2-regular} partial field is
$$\mathbb{U}_2 = (\mathbb{Q}(\alpha, \beta),\left<-1,\alpha, \beta, 1-\alpha, 1-\beta,\alpha-\beta\right>),$$
where $\alpha$ and $\beta$ are indeterminates.
We say a matroid is \emph{2-regular} if it is $\mathbb{U}_2$-representable.
Note that $\mathbb{U}_2$ is the universal partial field of $U_{2,5}$ \cite[Theorem 3.3.24]{vanZwam2009}.
If a matroid is $2$-regular, then it is $\mathbb{F}$-representable for every field $\mathbb{F}$ of size at least four \cite[Corollary 3.1.3]{Semple1998}.
However, the converse does not hold; for example, $U_{3,6}$ is representable over all fields of size at least four, but is not $2$-regular~\cite[Lemma 4.2.4]{Semple1998}.

More generally, the \emph{$k$-regular} partial field is $$\mathbb{U}_k = (\mathbb{Q}(\alpha_1,\dots,\alpha_k), \left<\{x-y : x,y \in \{0,1,\alpha_1,\dotsc,\alpha_k\}\textrm{ and }x \neq y\}\right>),$$ where $\alpha_1,\dotsc,\alpha_k$ are indeterminates.
In particular, a matroid is \emph{near-regular} if it is $\mathbb{U}_1$-representable.

We also make some use of the following partial fields \cite{PvZ2010b,vanZwam2009}.
The \emph{sixth-root-of-unity} partial field is
$\mathbb{S} = \left(\mathbb{Z}\left[\zeta\right], \left<\zeta\right> \right)$, where $\zeta$ is a solution to $x^2 - x + 1 = 0$.
A matroid is $\mathbb{S}$-representable if and only if it is $\GF(3)$- and $\GF(4)$-representable.

The \emph{2-cyclotomic} partial field is
$$\mathbb{K}_2 = (\mathbb{Q}(\alpha),\left<-1,\alpha-1, \alpha, \alpha+1\right>),$$
where $\alpha$ is an indeterminate.
If a matroid is $\mathbb{K}_2$-representable, then it is representable over every field of size at least four; but the converse does not hold \cite[Lemma~4.14 and Section~6]{PvZ2010a}.
The class of $2$-regular matroids is a proper subset of the $\mathbb{K}_2$-representable matroids.

\sloppy
Finally, Pendavingh and Van Zwam introduced, for each $i \in \{1,\dotsc,6\}$, the \emph{Hydra-$i$} partial field $\mathbb{H}_i$~\cite{PvZ2010b}.
A $3$-connected quinary matroid with a $\{U_{2,5},U_{3,5}\}$-minor is $\mathbb{H}_i$-representable if and only if it has at least $i$ inequivalent $\GF(5)$-representations.

\subsection{Delta-wye exchange}
\label{predy}

Let $M$ be a matroid with a coindependent triangle $T=\{a,b,c\}$.
Consider a copy of $M(K_4)$ having $T$ as a triangle with $\{a',b',c'\}$ as the complementary triad labelled such that $\{a,b',c'\}$, $\{a',b,c'\}$ and $\{a',b',c\}$ are triangles.
Let $P_{T}(M,M(K_4))$ denote the generalised parallel connection of $M$ with this copy of $M(K_4)$ along the triangle $T$.
Let $M'$ be the matroid $P_{T}(M,M(K_4))\backslash T$ where the elements $a'$, $b'$ and $c'$ are relabelled as $a$, $b$ and $c$ respectively.
The matroid~$M'$ is said to be obtained from $M$ by a \emph{$\Delta$-$Y$ exchange} on the triangle~$T$.
Dually, $M''$ is obtained from $M$ by a \emph{$Y$-$\Delta$ exchange} on the triad $T^*=\{a,b,c\}$ if $(M'')^*$ is obtained from $M^*$ by a $\Delta$-$Y$ exchange on $T^*$. 

We say that matroids $M$ and $M'$ are \emph{$\Delta Y$-equivalent} if $M'$ can be obtained from $M$ by a (possibly empty) sequence of $\Delta$-$Y$ exchanges on coindependent triangles and $Y$-$\Delta$ exchanges on independent triads.

For a matroid~$M$, we use $\Delta(M)$ to denote the set of all matroids $\Delta Y$-equivalent to $M$; for a set of matroids $\mathcal{N}$, we use $\Delta(\mathcal{N})$ to denote $\bigcup_{N \in \mathcal{N}} \Delta(N)$.
We also use $\Delta^{(*)}(\mathcal{N})$ to denote $\bigcup_{N \in \mathcal{N}} \Delta(\{N,N^*\})$.

The following two results were proved by Oxley, Semple and Vertigan~\cite{OSV2000}, generalising the analogous results by Akkari and Oxley~\cite{AO1993} regarding the $\mathbb{F}$-representability of $\Delta Y$-equivalent matroids for a field $\mathbb{F}$.

\begin{lemma}[{\cite[Lemma 3.7]{OSV2000}}]
  \label{deltaYrep}
  Let $\mathbb{P}$ be a partial field, and let $M$ and $M'$ be $\Delta Y$-equivalent matroids.
  Then $M$ is $\mathbb{P}$-representable if and only if $M'$ is $\mathbb{P}$-representable.
\end{lemma}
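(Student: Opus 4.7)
The plan is to establish this in two reductions followed by an explicit matrix construction. First, since $\Delta Y$-equivalence is the equivalence relation generated by single $\Delta$-$Y$ and $Y$-$\Delta$ exchanges, it suffices by induction on the length of the witnessing sequence to prove the lemma for a single such exchange. Second, $\mathbb{P}$-representability is preserved by matroid duality (the transpose of a $\mathbb{P}$-representation of $M$, with appropriate sign adjustment, is a $\mathbb{P}$-representation of $M^*$), and a $Y$-$\Delta$ exchange on an independent triad is by definition dual to a $\Delta$-$Y$ exchange on a coindependent triangle. Hence it is enough to show: if $M$ is $\mathbb{P}$-representable and $M'$ is obtained from $M$ by a $\Delta$-$Y$ exchange on a coindependent triangle $T$, then $M'$ is $\mathbb{P}$-representable.

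Let $T = \{a, b, c\}$ and let $A$ be a $\mathbb{P}$-representation of $M$. Since $T$ is coindependent, the complement $E(M) \setminus T$ contains a basis $B$ of $M$, so I may take $A$ to be a $B \times (E(M) \setminus B)$ matrix in which the columns indexed by $T$ appear explicitly. Since $T$ is a triangle, these three columns span a rank-$2$ subspace, and using row and column scalings by elements of the unit group of $\mathbb{P}$ together with at most one pivot (all $\mathbb{P}$-matrix operations captured by $\preceq$), I would normalise $A$ so that its $T$-block agrees with the corresponding block of a fixed totally-unimodular representation of $M(K_4)$, whose entries lie in $\{0, \pm 1\}$.

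From this normalised $A$, I construct a matrix $A'$ as a matrix-level realisation of the identity $M' = P_T(M, M(K_4)) \backslash T$. Concretely, append three new rows indexed by $a', b', c'$ extending the above regular representation of $M(K_4)$, then pivot so that $\{a', b', c'\}$ become column-labels while $\{a, b, c\}$ become row-labels, and finally delete the columns indexed by the original $T$ and relabel $\{a', b', c'\}$ back to $\{a, b, c\}$. The candidate representation of $M'$ is the resulting matrix.

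The main obstacle is verifying that $A'$ is a $\mathbb{P}$-matrix and that $M[A'] = M'$. The crucial observation for the first point is that the new rows have all entries in $\{0, \pm 1\}$ and meet the original rows of $A$ only through the normalised $T$-block, so every subdeterminant of $A'$ expands, via cofactor expansion along the new rows, as $\pm 1$ times a subdeterminant of $A$ (or a product of such), and therefore lies in $\mathbb{P}$; in particular no forbidden sums appear. To conclude $M[A'] = M'$, I would compare the basis family yielded by \cref{pmatroid} with the basis family of $P_T(M, M(K_4)) \backslash T$, which is the standard combinatorial description of the $\Delta$-$Y$ exchange. This last comparison is purely matroidal and independent of $\mathbb{P}$, so it may be carried out over any field once and for all; that this is where the proof becomes delicate is why the representation of $M(K_4)$ is chosen totally unimodular, ensuring both that the cofactor cancellations produce only products (never illegitimate sums) of elements of $\mathbb{P}$ and that the combinatorics matches.
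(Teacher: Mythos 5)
The paper offers no proof of \cref{deltaYrep}: it is quoted verbatim from Oxley, Semple and Vertigan \cite{OSV2000}, so there is no in-paper argument to compare against. Your outline follows the standard construction from that literature --- reduce to a single exchange, dispose of $Y$-$\Delta$ by duality, and realise $P_T(M,M(K_4))\backslash T$ at the matrix level by appending a new basis row and columns for $a',b',c'$ --- so the architecture is the right one.

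Two steps, however, do not go through as written. First, the $T$-columns of $A$ cannot in general be made to ``agree with'' a block of a totally unimodular representation of $M(K_4)$: they are full columns $v_a,v_b,v_c\in\mathbb{P}^B$ whose supports are not controlled by scaling, and a pivot in a $T$-column would pull an element of $T$ back into the basis. What is both achievable and sufficient is weaker: since $T$ is a triangle, $v_c=\alpha v_a+\beta v_b$ where, by Cramer's rule, $\alpha$ and $\beta$ are ratios of nonzero subdeterminants of $[I\mid A]$ and hence units, so after column scaling one may assume $v_c=v_a-v_b$ (say); this unique representability of $U_{2,3}$ is what permits the gluing. Second, and more seriously, the claim that cofactor expansion along the new row produces ``$\pm 1$ times a subdeterminant of $A$ \dots\ no forbidden sums appear'' is false as stated: a square submatrix of $A'$ meeting the new row in both of the columns $a'$ and $b'$ expands as $\pm\det(X\mid v_a)\pm\det(X\mid v_b)$, a genuine two-term combination of subdeterminants of $A$, and total unimodularity of the $M(K_4)$ block does nothing to legitimise it. This combination lies in $\mathbb{P}$ only because, with the normalisation $v_c=v_a-v_b$ chosen compatibly with the signs in the appended row, it equals $\det(X\mid v_c)$ --- a single subdeterminant of the original $A$ that uses the deleted column $c$. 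That recombination is the heart of the verification and is precisely the step your sketch omits; once it is supplied (together with the routine check that $M[A']$ has the bases of $P_T(M,M(K_4))\backslash T$), the argument is complete.
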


\begin{lemma}[{\cite[Theorem 1.1]{OSV2000}}]
  \label{deltaYexc}
  Let $\mathbb{P}$ be a partial field, and let $M$ be an excluded minor for the class of $\mathbb{P}$-representable matroids.
  If $M'$ is $\Delta Y$-equivalent to $M$, then $M'$ is an excluded minor for the class of $\mathbb{P}$-representable matroids.
\end{lemma}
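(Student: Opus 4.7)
The plan is to reduce to the case of a single $\Delta$-$Y$ exchange by induction on the length of a sequence of exchanges witnessing $\Delta Y$-equivalence; the corresponding case of a single $Y$-$\Delta$ exchange on an independent triad then follows by dualising, since for reduced $\mathbb{P}$-representations duality is realised by negating and transposing the matrix, so $\mathbb{P}$-representability (and hence the property of being an excluded minor) is closed under duality. So assume $M'$ is obtained from $M$ by a single $\Delta$-$Y$ exchange on a coindependent triangle $T = \{a,b,c\}$ of $M$. By \cref{deltaYrep}, since $M$ is not $\mathbb{P}$-representable, neither is $M'$.

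It remains to verify that every proper minor of $M'$ is $\mathbb{P}$-representable. Since the class of $\mathbb{P}$-representable matroids is minor-closed, this reduces to showing that $M'\setminus e$ and $M'/e$ are $\mathbb{P}$-representable for each $e \in E(M')$. The key claim is that each such single-element minor $N'$ of $M'$ is $\Delta Y$-equivalent to a proper minor $N$ of $M$; granted this, $N$ is $\mathbb{P}$-representable (as $M$ is an excluded minor), so $N'$ is too by \cref{deltaYrep}.

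To establish the key claim, I would distinguish two cases according to whether $e \in T$. If $e \notin T$, the $\Delta$-$Y$ exchange commutes with the minor operation on $e$ in the expected sense: provided $T$ remains a coindependent triangle of $M\setminus e$ (respectively $M/e$), one has $M'\setminus e = \Delta_T(M\setminus e)$ and $M'/e = \Delta_T(M/e)$, and otherwise a direct calculation inside the generalised parallel connection $P_T(M,M(K_4))$ still exhibits the required $\Delta Y$-equivalent minor of $M$. If $e \in T$, recall that $T$ is a triad of $M'$, and unfold the defining formula $M' = P_T(M,M(K_4))\setminus T$ (followed by the relabelling of $\{a',b',c'\}$ as $\{a,b,c\}$); together with the explicit structure of $M(K_4)$, this lets one check that contracting the triad element $e$ of $M'$ corresponds to deleting $e$ in $M$, while deleting $e$ from $M'$ corresponds to contracting $e$ in $M$ followed by a $\Delta$-$Y$ exchange on the two-element residue of the $M(K_4)$ gadget.

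The main obstacle is the $e\in T$ case: the required matroid identities are not purely formal, and their verification demands careful bookkeeping of how the elements of $T$ interact with the auxiliary copy of $M(K_4)$ through the generalised parallel connection, using the fact that $\{a,b',c'\}$, $\{a',b,c'\}$ and $\{a',b',c\}$ are triangles while $\{a',b',c'\}$ becomes a triad after deleting $T$. Once these identities are secured, the conclusion is immediate from \cref{deltaYrep}.
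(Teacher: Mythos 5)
First, a point of comparison: the paper does not prove this lemma at all --- it is quoted directly as \cite[Theorem~1.1]{OSV2000} --- so the only fair comparison is with the argument of Oxley, Semple and Vertigan. Your overall strategy (reduce to a single $\Delta$-$Y$ exchange by induction, handle $Y$-$\Delta$ by duality, get non-representability of $M'$ from \cref{deltaYrep}, and then show every single-element deletion and contraction of $M'$ is representable) is indeed the outline of their proof. The problem is that your ``key claim'' --- that every single-element minor of $M'=\Delta_T(M)$ is $\Delta Y$-equivalent to a proper minor of $M$ --- is false as stated, and the mechanism you propose for the hardest case does not exist.

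Concretely, for $e\in T$ the contraction behaves as you say ($M'/e\cong M\ba e$, up to swapping the labels of the other two elements of $T$), but the deletion does not: writing $T=\{e,f,g\}$, the matroid $M'\ba e$ is a \emph{series extension} of $M\ba f\ba g$ --- deleting one leg of the $Y$ leaves the other two legs as a series pair standing in for the old triangle element $e$, as the graphic case $M(K_4)$ already shows. A series extension has the right number of elements but is neither a minor of $M$ nor $\Delta Y$-equivalent to one; $\Delta$-$Y$ exchanges act on coindependent triangles and preserve $3$-connectivity-type structure, and ``a $\Delta$-$Y$ exchange on the two-element residue'' is not a defined operation. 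To close this case you must separately invoke the (true, but unproved in your sketch) fact that $\mathbb{P}$-representability is preserved under series and parallel extensions. The same extra ingredient is needed in your ``otherwise'' branch for $e\notin T$: when $T$ fails to survive as a coindependent triangle of $M\ba e$ or $M/e$ (for instance when $e\in\mathrm{cl}_M(T)\setminus T$ and contraction collapses $T$ to rank one), $M'\ba e$ or $M'/e$ is again only a series--parallel modification of a minor of $M$, and this degenerate case analysis is exactly where the published proof does its real work; ``a direct calculation inside the generalised parallel connection'' is not a substitute for carrying it out.
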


\subsection{Excluded-minor characterisations}

We now recall Geelen, Gerards and Kapoor's excluded-minor characterisation of quaternary matroids~\cite{GGK2000}.
The matroid $P_8$ is illustrated in \cref{p8fig};
observe that $\{a,b,c,d\}$ and $\{e,f,g,h\}$ are disjoint circuit-hyperplanes.
Relaxing both of these circuit-hyperplanes results in the matroid $P_8^=$.

\begin{theorem}[{\cite[Theorem~1.1]{GGK2000}}]
  \label{gf4minors}
  A matroid is $\GF(4)$-representable if and only if it has no minor isomorphic to $U_{2,6}$, $U_{4,6}$, $P_6$, $F_7^-$, $(F_7^-)^*$, $P_8$, and $P_8^=$.
\end{theorem}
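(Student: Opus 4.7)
The plan is to prove both directions. The forward direction is a finite verification: since every $p\in\GF(4)$ satisfies $1-p\in\GF(4)$, we have $\mathfrak{F}(\GF(4))=\GF(4)$, and one checks directly by cross ratios that each of $U_{2,6}$, $U_{4,6}$, $P_6$, $F_7^-$, $(F_7^-)^*$, $P_8$, and $P_8^=$ fails to be $\GF(4)$-representable while every single-element deletion and contraction succeeds. The substance of the theorem is the converse: any excluded minor $M$ for the class of $\GF(4)$-representable matroids that has no minor isomorphic to one of these seven matroids must not exist.

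Assume such an $M$ exists. First I would reduce to the case that $M$ is $3$-connected. The class of $\GF(4)$-representable matroids is closed under direct sums and $2$-sums (partial-field representations combine naturally across small separations), so by the minor-minimality of $M$ neither a $1$-separation nor a $2$-separation is possible. Together with the absence of $U_{2,6}$, $U_{4,6}$, and $P_6$ as minors, which handles the low-rank and low-corank cases, I may assume $M$ is $3$-connected with $r(M),r^*(M)\ge 3$.

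The heart of the proof is a representation-lifting argument. Choose $e\in E(M)$ so that $M\backslash e$ (or dually $M/e$) remains $3$-connected; such an $e$ exists by a Seymour-style wheels-and-whirls/splitter argument. By minor-minimality, both $M\backslash e$ and $M/e$ are $\GF(4)$-representable; fix representations $A_1$ of $M\backslash e$ and $A_2$ of $M/e$. Now invoke Kahn's uniqueness theorem, which says that the $\GF(4)$-representation of a $3$-connected $\GF(4)$-representable matroid is unique up to column scaling and the Frobenius automorphism $x\mapsto x^2$. This rigidity lets me normalise $A_1$ and $A_2$ so that they agree on the common submatrix indexed by $E(M)\setminus\{e\}$. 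The task then is to extend this common submatrix by one further column (or row) to obtain a $\GF(4)$-representation of $M$ itself; the obstruction to doing so is a system of cofactor equations, one per basis of $M$ containing $e$, that the new column must simultaneously satisfy.

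The hard part will be showing that the only irreducible obstructions to this extension are the four ``large'' matroids in the list. Each failure pattern corresponds to a small local configuration around $e$---a triangle, a triad, or a pair of would-be circuit-hyperplanes involving $e$---and one must argue case by case that every such pattern forces one of $F_7^-$, $(F_7^-)^*$, $P_8$, or $P_8^=$ as a minor. The rank-$3$ obstructions pick up the non-$\GF(4)$ cross ratios witnessed by $F_7^-$ (or by $(F_7^-)^*$ in the dual deletion case), whereas in higher rank the only new obstruction is a pair of inconsistently relaxable $4$-element circuit-hyperplanes, which forces $P_8$ or $P_8^=$. The bulk of the technical work, and the genuine difficulty of the theorem, lies in this exhaustive case analysis and in the proof that no new higher-rank obstruction ever arises beyond the $P_8$ family; the surrounding material---connectivity reduction, splitter-type choice of $e$, and the uniqueness of representations---is standard scaffolding.
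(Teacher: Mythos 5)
This statement is not proved in the paper at all: it is quoted verbatim from Geelen, Gerards and Kapoor \cite{GGK2000}, so there is no internal proof to compare against, and any correct argument here amounts to reproving a long and difficult research paper. Your outline does gesture at the right ingredients (finite verification of the forward direction, reduction to the $3$-connected case via closure under $1$- and $2$-sums, uniqueness of $\GF(4)$-representations of $3$-connected quaternary matroids \`a la Kahn, and an attempt to extend a representation by one element), but as a proof it has genuine gaps. The decisive one is the final step: the claim that every obstruction to extending the representation ``forces one of $F_7^-$, $(F_7^-)^*$, $P_8$, or $P_8^=$'' is precisely the content of the theorem, and you assert it rather than derive it. In particular, nothing in your argument bounds $|E(M)|$, so the promised ``exhaustive case analysis'' is not a finite task as stated; Geelen, Gerards and Kapoor obtain such control only through the machinery of blocking sequences (or an equivalent connectivity argument), which measures how far the candidate matrix $A$ is from representing $M$ and shows that a large discrepancy yields one of the listed minors. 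That machinery, or a substitute for it, is entirely absent from your sketch.

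A secondary issue is the setup of the lifting step. You propose to reconcile a representation $A_1$ of $M\ba e$ with a representation $A_2$ of $M/e$; the standard (and workable) argument instead finds a \emph{pair} $e,f$ such that $M\ba e$, $M\ba f$, and $M\ba e\ba f$ are all $3$-connected with a suitable minor (a splitter-type fact of the kind this paper's \cref{splicinglemma} provides for its own computations), normalises the two representations to agree on $M\ba e\ba f$ using the stabilizer/uniqueness property, and splices them into a single candidate matrix. With your one-element version it is not clear what two objects are being ``normalised to agree,'' nor that the cofactor system you describe has the structure needed for the case analysis. Finally, the remark that absence of $U_{2,6}$, $U_{4,6}$, $P_6$ ``handles the low-rank and low-corank cases'' also needs an argument; rank-$3$ excluded minors are not dispatched by those three matroids alone (indeed $F_7^-$ has rank $3$).
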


Let $\AG(2,3) \ba e$ denote the matroid obtained from $\AG(2,3)$ by deleting an element (this matroid is unique up to isomorphism).
Let $(\AG(2, 3)\ba e)^{\Delta Y}$ denote matroid obtained from $\AG(2,3) \ba e$ by performing a single $\Delta$-$Y$ exchange on a triangle (again, this matroid is unique up to isomorphism).
Hall, Mayhew, and Van Zwam proved the following excluded-minor characterisation of the near-regular matroids~\cite{HMvZ2011}.

\begin{theorem}[{\cite[Theorem 1.2]{HMvZ2011}}]
  \label{nr_exminors}
  A matroid is near-regular if and only if it has no minor isomorphic to $U_{2,5}$, $U_{3,5}$, $F_7$, $F_7^*$, $F_7^-$, $(F_7^-)^*$, $\AG(2,3)\ba e$, $(\AG(2,3)\ba e)^*$, $(\AG(2, 3)\ba e)^{\Delta Y}$, and $P_8$.
\end{theorem}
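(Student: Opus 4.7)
The plan is to follow the two-step template familiar for excluded-minor theorems of this shape: verify the forward direction, then prove the harder converse.

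For the forward direction, I would show for each $N$ on the list that (a)~$N$ is not near-regular and (b)~every proper minor of $N$ is near-regular. For (a), it suffices to exhibit a field $\mathbb{F}$ of size at least $3$ over which $N$ is not representable: for example, $F_7$ fails over $\GF(3)$, $F_7^-$ fails over $\GF(5)$, and $\AG(2,3)\ba e$ and $P_8$ fail over $\GF(4)$ (via \cref{gf4minors}). Duality handles the dual matroids on the list, and \cref{deltaYrep} handles $(\AG(2,3)\ba e)^{\Delta Y}$. For (b), since each $N$ has at most eight elements, minor-minimality is a finite check: one exhibits an explicit $\mathbb{U}_1$-matrix representing every proper minor.

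For the converse, let $M$ be a matroid avoiding all listed excluded minors. Because representability over a partial field is closed under direct sum and $2$-sum and each listed matroid is $3$-connected, I reduce to $M$ being $3$-connected. Since $\{U_{2,5}, U_{3,5}, F_7, F_7^*\}$ are precisely the excluded minors for ternary matroids, $M$ is ternary. Combining this with \cref{gf4minors}, the remaining quaternary excluded minors $U_{2,6}, U_{4,6}, P_6$ are automatically ruled out by the absence of a $U_{2,5}$ or $U_{3,5}$ minor in $M$; $F_7^-, (F_7^-)^*, P_8$ are on our list and hence excluded; and $P_8^=$ can be excluded by a short combinatorial argument from the preceding. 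Thus $M$ is quaternary as well, so $M$ is $\mathbb{S}$-representable. The final task is to promote this $\mathbb{S}$-representation to a $\mathbb{U}_1$-representation. The remaining listed excluded minors $\AG(2,3)\ba e$, $(\AG(2,3)\ba e)^*$, and $(\AG(2,3)\ba e)^{\Delta Y}$ (together with their $\Delta Y$-equivalents, by \cref{deltaYexc}) are exactly the $\mathbb{S}$-representable obstructions to near-regularity; with them excluded, a lift to $\mathbb{U}_1$ can be constructed, whereupon \cref{homomorphisms} delivers the conclusion.

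The main obstacle is the lift from $\mathbb{S}$-representability to $\mathbb{U}_1$-representability. Stabilizer theory for near-regular matroids is typically anchored on a $U_{2,5}$ minor, but $M$ has none, so an alternative anchor within $M$ must be located and used to build the lift inductively, showing that the cross-ratio choices forced by simultaneous $\GF(3)$- and $\GF(4)$-representability are simultaneously realizable in $\mathbb{U}_1$. Carrying out this inductive case analysis, while ensuring compatibility with the $\Delta Y$-closure dictated by \cref{deltaYexc}, is where the original proof of Hall, Mayhew, and Van Zwam concentrates the bulk of its effort.
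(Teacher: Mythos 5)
First, a point of reference: the paper does not prove this statement; it is imported verbatim from Hall, Mayhew and Van Zwam \cite{HMvZ2011}, so there is no internal proof to compare against, and your proposal has to be judged against the original (long and technical) argument. Your forward direction is the right kind of finite check, but two of the witnessing fields are wrong: $F_7^-$ \emph{is} representable over $\GF(5)$ (it is representable over every field of characteristic other than two) --- the field it fails over is $\GF(4)$, via \cref{gf4minors}; and $\AG(2,3)\ba e$ \emph{is} representable over $\GF(4)$ (its universal partial field is $\mathbb{S}$) --- the field it fails over is $\GF(5)$. The paper itself records the correct facts in the proof of \cref{nou25u35}. These are repairable slips.

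The converse contains the genuine gap. After correctly reducing to a $3$-connected matroid $M$ that is both ternary and quaternary (hence $\mathbb{S}$-representable), you assert that $\AG(2,3)\ba e$, its dual, and $(\AG(2,3)\ba e)^{\Delta Y}$ ``are exactly the $\mathbb{S}$-representable obstructions to near-regularity'' and that ``with them excluded, a lift to $\mathbb{U}_1$ can be constructed.'' That assertion is precisely the theorem restricted to $\mathbb{S}$-representable matroids, so as written the step is circular: the lift is the entire content of the hard direction and no mechanism for producing it is given. The actual proof in \cite{HMvZ2011} does not lift an $\mathbb{S}$-representation at all. It argues contrapositively with an excluded minor $M$ for $\mathbb{U}_1$: using the strong-stabilizer machinery (anchored on $U_{2,4}$, which is a strong $\mathbb{U}_1$-stabilizer present in any $3$-connected non-binary candidate --- not on $U_{2,5}$, which need not occur), a carefully chosen deletion pair, and an analysis of ``incriminating'' entries in would-be representations, it bounds $|E(M)|$ by a small constant and then finishes with a finite, partly computer-assisted enumeration --- essentially the template that the present paper automates in \cref{impl}. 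Your sketch correctly locates where the difficulty lies, but without the size bound and the terminal finite check it does not constitute a proof.
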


\subsection{Splitter theorems}

Let $\mathcal{N}$ be a set of matroids.
We say that a matroid $M$ has an $\mathcal{N}$-minor if $M$ has an $N$-minor for some $N \in \mathcal{N}$.
In order to exhaustively generate the matroids in some class that are $3$-connected and have an $\mathcal{N}$-minor,
we use Seymour's Splitter Theorem extensively.

\begin{theorem}[Seymour's Splitter Theorem \cite{Seymour1980}]
  \label{seysplit}
  Let $M$ be a $3$-connected matroid that is not a wheel or a whirl, and let $N$ be a $3$-connected proper minor of $M$.
  Then there exists an element $e \in E(M)$ such that $M/e$ or $M\ba e$ is $3$-connected and has an $N$-minor.
\end{theorem}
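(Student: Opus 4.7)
The plan is to argue by contradiction on $|E(M)|$, using Tutte's Wheels-and-Whirls theorem as the engine. Recall that Wheels-and-Whirls asserts that if $M$ is $3$-connected and not a wheel or whirl, then some $e \in E(M)$ has the property that $M\ba e$ or $M/e$ is $3$-connected; however, this gives no information about whether an $N$-minor survives, and the whole content of the splitter theorem is to upgrade it so it does.

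Suppose the statement fails, and take a counterexample $(M,N)$ with $|E(M)|$ minimum. By Wheels-and-Whirls, pick $e \in E(M)$ such that $M\ba e$ or $M/e$ is $3$-connected; by matroid duality we may assume $M \ba e$ is. If $M \ba e$ has an $N$-minor we are done, so every element whose single-step removal preserves $3$-connectivity must be used by every $N$-minor of $M$. In particular, $N$ is a minor of $M/e$ but $M/e$ is not $3$-connected, so $M/e$ admits a $2$-separation $(A,B)$. Because $M$ itself is $3$-connected, $e$ must lie in the closure of both $A$ and $B$ in $M$; this is where the structural rigidity begins.

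The technical heart of the argument is then to exploit $(A,B)$. By a standard connectivity computation one shows that the side of the $2$-separation not containing the ``core'' of any fixed $N$-minor of $M/e$ contains an element $f$ that is either free enough to contract or cocircuit-free enough to delete, while preserving both $3$-connectivity of the resulting single-element minor of $M$ and the $N$-minor. Applying the minimality assumption to the resulting smaller matroid, or iterating this peeling, contradicts the choice of $(M,N)$. The dual argument (starting from $M/e$ being $3$-connected) is symmetric.

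The main obstacle is the case in which the $2$-separations of $M/e$ cascade: after identifying a plausible candidate $f$, the minor $M\ba f$ (or $M/f$) may fail to be $3$-connected because $f$ lay in yet another nested $2$-separation. The resolution is to choose $f$ at the end of a maximal fan-like sequence inside $(A,B)$, so that removing $f$ collapses at most one $2$-separation and the remaining side still carries the full $N$-minor. The hypothesis that $M$ is not a wheel or whirl is exactly what prevents such a fan from absorbing all of $E(M)$, ensuring the process terminates with a usable $f$.
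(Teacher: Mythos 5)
The paper does not prove this statement: it is quoted directly from Seymour's 1980 paper as an external tool, so there is no internal proof to compare against. Judged on its own terms, your proposal is an outline rather than a proof, and it contains both a concrete logical error and a missing core. The error: after choosing $e$ with $M\ba e$ $3$-connected and observing that $M\ba e$ has no $N$-minor, you conclude ``in particular, $N$ is a minor of $M/e$''. This does not follow. If $e$ lies in the ground set of every minor of $M$ isomorphic to $N$ -- which is exactly what your own preceding sentence asserts -- then $M/e$ has no $N$-minor either, so the premise from which you extract the $2$-separation of $M/e$ is unavailable. The standard proofs go the other way: they start from the nonempty set of elements whose deletion or contraction keeps an $N$-minor (nonempty because $N$ is a \emph{proper} minor) and then fight for $3$-connectivity, rather than starting from Wheels-and-Whirls and hoping the $N$-minor survives.

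The missing core: essentially all of the content of the Splitter Theorem lives in the two steps you label ``a standard connectivity computation'' and ``choose $f$ at the end of a maximal fan-like sequence''. Neither is routine. Showing that a usable $f$ exists on the correct side of the $2$-separation, that its removal preserves both $3$-connectivity and the $N$-minor, and that the iteration can only fail to terminate when $M$ is a wheel or a whirl, occupies several pages of case analysis in every published proof (Seymour's original argument, Coullard--Oxley, or the treatment in Oxley's book via Tutte's Triangle Lemma and fans). You have correctly located where the wheel/whirl hypothesis must enter, but identifying the shape of the argument is not the same as giving it; as written, the proposal is not a proof.
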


  We are primarily interested in matroids that are not near-regular, due to \cref{nr_exminors}.
  The next corollary follows from the observation that wheels and whirls are near-regular.
\begin{corollary}
  \label{seysplitcorr}
  Let $M$ be a $3$-connected matroid with a proper $N$-minor, where $N$ is not near-regular. 
  Then, for $(M',N') \in \{(M,N),(M^*,N^*)\}$, there exists an element $e \in E(M')$ such that $M'\ba e$ is $3$-connected and has an $N'$-minor.
\end{corollary}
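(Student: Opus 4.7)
The plan is to invoke \cref{seysplit} directly, using duality to handle the case when contraction (rather than deletion) produces the required 3-connected minor. The first step is to rule out the exceptional case of the Splitter Theorem: since the class of near-regular matroids is minor-closed and $M$ has an $N$-minor with $N$ not near-regular, $M$ itself is not near-regular. Because every wheel and every whirl is near-regular (wheels are regular, and whirls are $\mathbb{U}_1$-representable), $M$ is neither a wheel nor a whirl. Hence \cref{seysplit} applies to $M$ and $N$, and yields an element $e \in E(M)$ for which either $M \ba e$ or $M/e$ is $3$-connected and has an $N$-minor.

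In the first case, take $(M',N') = (M,N)$ and the conclusion holds with this $e$. In the second case, take $(M',N') = (M^*, N^*)$. Since $3$-connectivity is preserved under duality and $M^* \ba e = (M/e)^*$, the matroid $M' \ba e$ is $3$-connected; and since $M/e$ has an $N$-minor, its dual $M^* \ba e$ has an $N^*$-minor (as minors and duals commute in the expected way: if $N = M/e/X \ba Y$ then $N^* = M^* \ba e \ba X / Y$). Thus $(M',N') = (M^*,N^*)$ with the same element $e$ witnesses the claim.

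There is no substantial obstacle. The only point beyond mechanical application of \cref{seysplit} and duality is the fact noted in the preceding paragraph of the excerpt, namely that wheels and whirls are near-regular, which rules out the exceptional conclusion of the Splitter Theorem and is a standard fact about the class.
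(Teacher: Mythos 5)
Your proof is correct and matches the paper's intended argument exactly: the paper offers no formal proof, only the remark that the corollary follows from Seymour's Splitter Theorem together with the observation that wheels and whirls are near-regular, which is precisely the route you take (ruling out the exceptional case, then dualising when the Splitter Theorem yields a contraction rather than a deletion).
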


To reduce the number of extensions to consider, when generating potential excluded minors, we use splicing, as described in \cref{sec-splic}.
Since we only keep track of $3$-connected matroids with a particular $N$-minor, we require a guarantee of the existence of so-called $N$-detachable pairs~\cite{bww3}, in order to generate an exhaustive list of potential excluded minors.
%
%
Let $M$ be a $3$-connected matroid, and let $N$ be a $3$-connected minor of $M$.
A pair $\{a,b\} \subseteq E(M)$ is \emph{$N$-detachable} if either $M\ba a\ba b$ or $M/a/b$ is $3$-connected and has an $N$-minor. 
To describe matroids with no $N$-detachable pairs, we require a definition.
Let $P \subseteq E(M)$ be an exactly $3$-separating set of $M$ such that $|P| \ge 6$.
Suppose $P$ has the following properties:
\begin{enumerate}[label=\rm(\alph*)]
  \item there is a partition $\{L_1,\dotsc,L_t\}$ of $P$ into pairs such that for all distinct $i,j\in\{1,\dotsc,t\}$, the set $L_i\cup L_j$ is a cocircuit,
  \item there is a partition $\{K_1,\dotsc,K_t\}$ of $P$ into pairs such that for all distinct $i,j\in\{1,\dotsc,t\}$, the set $K_i\cup K_j$ is a circuit,
  \item $M / p$ and $M \ba p$ are $3$-connected for each $p \in P$,
  \item for all distinct $i,j\in\{1,\dotsc,t\}$, the matroid $\si(M / a / b)$ is $3$-connected for any $a \in L_i$ and $b \in L_j$, and
  \item for all distinct $i,j\in\{1,\dotsc,t\}$, the matroid $\co(M \ba a \ba b)$ is $3$-connected for any $a \in K_i$ and $b \in K_j$.
\end{enumerate}
Then we say $P$ is a \emph{\spikey} of $M$.

\begin{theorem}[{\cite[Theorem 1.1]{bww3}}]
  \label{detachthm}
  Let $M$ be a $3$-connected matroid, and let $N$ be a $3$-connected minor of $M$ such that $|E(N)| \ge 4$, and $|E(M)|-|E(N)| \ge 6$.
  Then either
  \begin{enumerate}
    \item $M$ has an $N$-detachable pair, 
    \item there is a matroid $M'$ obtained by performing a single $\Delta$-$Y$ or $Y$-$\Delta$ exchange on $M$ such that $M'$ has an $N$-minor and an $N$-detachable pair, or
    \item $M$ has a \spikey\ $P$,
      and if $|E(M)| \ge 13$, then at most one element of $E(M)-E(N)$ is not in $P$. 
  \end{enumerate}
\end{theorem}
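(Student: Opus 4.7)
The plan is to proceed by contradiction. I assume that $M$ has no $N$-detachable pair, and that no matroid $M'$ obtainable from $M$ by a single $\Delta$-$Y$ or $Y$-$\Delta$ exchange admits both an $N$-minor and an $N$-detachable pair; the goal is then to exhibit a \spikey\ $P \subseteq E(M)$ satisfying the stronger condition when $|E(M)| \ge 13$. Both hypothesis and conclusion are self-dual: clauses (1) and (3) are manifestly so, and for (2) one uses that $\Delta$-$Y$ and $Y$-$\Delta$ exchanges swap under matroid duality. So, without loss of generality, I may apply \cref{seysplit} to fix some $e \in E(M)$ for which $M \backslash e$ is $3$-connected and has an $N$-minor, and work from there.

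The first main step is to understand, for each candidate partner $f \in E(M) \setminus \{e\}$, why $\{e,f\}$ fails to be $N$-detachable. Two failure types arise: either neither $M \backslash e \backslash f$ nor $M \backslash e / f$ is $3$-connected---which, by Bixby-type lemmas, forces $f$ to sit inside a triangle, triad, or short fan that also meets $e$---or one of these matroids is $3$-connected but loses every $N$-minor, which, by the usual stabiliser-style arguments for preserving $N$-minors under element removal, forces $f$ into a tightly constrained 3-separating cluster. Repeating this for every starting element $e'$ produced by the Splitter Theorem populates $E(M) - E(N)$ with a rich supply of triangles, triads, and fan fragments.

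The second main step is to show that any local fan structure either yields an $N$-detachable pair outright, or can be dismantled by a single $\Delta$-$Y$ or $Y$-$\Delta$ exchange on one of its triangles or triads---either outcome contradicts the standing assumption. Hence every surviving obstruction belongs to a non-fan, spike- or swirl-like cluster. Aggregating these clustered elements produces a set $P$ which one verifies is exactly $3$-separating with $|P| \ge 6$; the paired structure that emerges---each $p \in P$ having a unique circuit partner and a unique cocircuit partner within $P$---delivers the two partitions $\{L_1,\dotsc,L_t\}$ and $\{K_1,\dotsc,K_t\}$. Conditions (c)--(e) of the definition of a \spikey\ are then checked using the single-element removability granted by Step~1 and the failure of pair-detachability assumed at the outset.

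Finally, I would establish the size clause by a pigeonhole-style argument: if two distinct elements $f_1,f_2$ of $E(M) - E(N)$ lay outside $P$, then, using $|E(M)| \ge 13$ to guarantee enough room, one of them can be recycled as the starting element in the whole analysis, forcing the other either into $P$ or into a configuration whose removal preserves an $N$-minor---either alternative contradicting the choice of $f_1,f_2$. The main obstacle will be the second failure type in Step~1: an element $f$ that cannot join $e$ not for connectivity reasons but because every removal kills every $N$-minor of $M \backslash e$. Handling this requires exploiting the multiplicity of available $N$-minors and reselecting the witnessing minor on the fly, while simultaneously closing off the $\Delta$-$Y$/$Y$-$\Delta$ escape route afforded by clause (2); it is in making this coordination rigorous---and keeping the case analysis finite---that the bulk of the technical work lies.
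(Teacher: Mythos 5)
The paper does not prove this theorem: it is imported verbatim from \cite[Theorem~1.1]{bww3}, and the authors' only original contribution here is the remark following the statement, namely that the explicitly described $3$-separators arising in case (iii) of the cited result satisfy conditions (a)--(e) of their definition of a \spikey\ (with (c) supplied by \cite[Lemma~5.3]{bww3}). Your proposal instead attempts to reprove the theorem from scratch; that is the content of a long, highly technical separate paper, and your sketch leaves essentially all of the hard work unaddressed. Concretely: in Step~1 you assert that when both $M \ba e \ba f$ and $M \ba e / f$ fail, ``Bixby-type lemmas'' force $f$ into a triangle, triad or fan meeting $e$ --- but Bixby's lemma controls single-element removals only, and classifying how a \emph{pair} can fail while also tracking the $N$-minor is precisely the multi-case structural analysis that occupies most of \cite{bww3}. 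In Step~2 the claim that every fan ``either yields an $N$-detachable pair outright, or can be dismantled by a single $\Delta$-$Y$ or $Y$-$\Delta$ exchange'' is a restatement of outcome (ii), not an argument for it. The aggregation of the surviving obstructions into a single exactly $3$-separating set $P$ carrying the interlocking circuit and cocircuit pairings of conditions (a) and (b) is likewise asserted rather than derived: nothing explains why the obstructions coalesce into one $3$-separator rather than several, nor why each element acquires exactly one circuit-partner and one cocircuit-partner within $P$.

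Two further local problems. Your opening ``without loss of generality\dots apply \cref{seysplit}'' silently assumes $M$ is not a wheel or a whirl, which is a hypothesis of the Splitter Theorem and is not granted here; that case must be disposed of separately. And the size clause in (iii) --- that for $|E(M)| \ge 13$ at most one element of $E(M)-E(N)$ lies outside $P$ --- is not a pigeonhole afterthought: it requires showing that elements outside the identified $3$-separator can be removed while preserving $3$-connectivity and the $N$-minor, which feeds back into the whole detachable-pair analysis. In short, your plan names plausible ingredients but carries out none of the steps at which the difficulty actually resides; the defensible route here is the one the paper takes, namely citing \cite{bww3} and verifying only that the $3$-separators described there meet the \spikey\ definition.
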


We note that in the statement of this theorem in \cite{bww3}, the precise structure of the $3$-separators that arise in case (iii) is described.
It is clear that when $|E(M)|-|E(N)| \ge 6$, each of these $3$-separators satisfy conditions (a) and (b) in the definition of a \spikey.
The fact that (c) holds for such a $3$-separator follows from \cite[Lemma~5.3]{bww3}, and it is easily checked that (d), and dually (e), also hold.

\subsection{Equivalence of \texorpdfstring{$\mathbb{P}$}{P}-matrices, and stabilizers}

Let $\mathbb{P} = (R,G)$ be a partial field, and let $A$ and $A'$ be $\mathbb{P}$-matrices.
We say that $A$ and $A'$ are \emph{scaling equivalent} if $A'$ can be obtained from $A$ by scaling rows and columns by elements of $G$.
If $A'$ can be obtained from $A$ by scaling, pivoting, permuting rows and columns, and also applying automorphisms of $\mathbb{P}$, then we say that $A$ and $A'$ are \emph{algebraically equivalent}.
We say that $M$ is \emph{uniquely representable over $\mathbb{P}$} if any two $\mathbb{P}$-representations of $M$ are algebraically equivalent.

Let $M$ and $N$ be $\mathbb{P}$-representable matroids, where $M$ has an $N$-minor.
Then \emph{$N$ stabilizes $M$ over $\mathbb{P}$} if for any scaling-equivalent $\mathbb{P}$-representations $A_1'$ and $A_2'$ of $N$ that extend to $\mathbb{P}$-representations $A_1$ and $A_2$ of $M$, respectively, $A_1$ and $A_2$ are scaling equivalent.

For a partial field~$\mathbb{P}$, let $\mathcal{M}(\mathbb{P})$ be the class of matroids representable over $\mathbb{P}$.
A matroid $N \in \mathcal{M}(\mathbb{P})$ is a \emph{$\mathbb{P}$-stabilizer} if, for any $3$-connected matroid $M \in \mathcal{M}(\mathbb{P})$ having an $N$-minor, the matroid $N$ stabilizes $M$ over $\mathbb{P}$.

Following Geelen et al.~\cite{GOVW1998}, we say that
a matroid $N$ \emph{strongly stabilizes $M$ over $\mathbb{P}$} if $N$ stabilizes $M$ over $\mathbb{P}$, and every
$\mathbb{P}$-representation of $N$ extends to a $\mathbb{P}$-representation of $M$.
We say that $N$ is a \emph{strong $\mathbb{P}$-stabilizer} 
if $N$ is a $\mathbb{P}$-stabilizer and $N$ strongly stabilizes every matroid in $\mathcal{M}(\mathbb{P})$ with an $N$-minor.

\section{Partial-field proxies} 
\label{preconfine}

In this section, we show that we can simulate a representation over a partial field by a representation over a finite field, where we have constraints on the subdeterminants appearing in the representation.
This has efficiency benefits for our computations, as we can utilise an existing implementation of finite fields, and avoid a full implementation of a partial field from scratch.

Let $\mathbb{P}$ be a partial field, let $F \subseteq \mathfrak{F}(\mathbb{P})$, let $M$ be a matroid, and
 let $A$ be a $\mathbb{P}$-matrix so that $M=M[A]$.
 We say that the matrix $A$ is  
 {\em $F$-confined} if $\Cr(A) \subseteq F \cup \{0,1\}$.
 If $A$ is an $F$-confined $\mathbb{P}$-matrix  and $\phi: \mathbb{P}\rightarrow \mathbb{P}'$ is a partial-field homomorphism, then $M[A]=M[\phi(A)]$ and $$\Cr(\phi(A)) \subseteq \phi(F),$$ so that $\phi(A)$ is an $\phi(F)$-confined representation over $\mathbb{P}'$.
We will show
 that under certain conditions on $\phi$ and $F$, any $\phi(F)$-confined representation over $\mathbb{P}'$ can be lifted to an $F$-confined representation over $\mathbb{P}$.

 The following is a reformulation of \cite[Corollary~3.8]{PvZ2010a} (see also \cite[Corollary~4.1.6]{vanZwam2009}) using the notion of $F$-confined partial-field representations.
To see this, take the restriction of $h$ to $\Cr(A)$ as the lift function.

\begin{theorem}[{Lift Theorem \cite{PvZ2010a}}]
  \label{lift}
  Let $\mathbb{P}$ and $\mathbb{P}'$ be partial fields, let $F \subseteq \mathfrak{F}(\mathbb{P}')$, let $A$ be an $F$-confined $\mathbb{P}'$-matrix, and let $\phi : \mathbb{P} \rightarrow \mathbb{P}'$ be a partial-field homomorphism.
  Suppose there exists a function $h: F \rightarrow \mathbb{P}$ such that
  \begin{enumerate}
    \item $\phi(h(p)) = p$ for all $p \in F$,
    \item if $1+1 \in \mathbb{P}'$, then $1+1 \in \mathbb{P}$, and $1+1 = 0$ in $\mathbb{P}'$ if and only if $1+1=0$ in $\mathbb{P}$,
    \item for all $p,q \in F$,
      \begin{itemize}
        \item if $p+q = 1$ then $h(p) + h(q) = 1$, and
        \item if $pq = 1$ then $h(p)h(q) = 1$; and,
      \end{itemize}
    \item for all $p,q,r \in F$, we have $pqr = 1$ if and only if $h(p)h(q)h(r) = 1$.
  \end{enumerate}
  Then there exists a $\mathbb{P}$-matrix $A'$ such that $\phi(A')$ is scaling-equivalent to $A$.
\end{theorem}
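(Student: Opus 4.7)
The plan is to deduce this statement directly from the earlier version of the Lift Theorem in \cite[Corollary~3.8]{PvZ2010a}, which assumes a lift function defined specifically on the cross ratios $\Cr(A)$ of the target matrix $A$ (rather than on an arbitrary set $F \subseteq \mathfrak{F}(\mathbb{P}')$). The content of the present reformulation is essentially that $F$-confinement, namely $\Cr(A) \subseteq F \cup \{0,1\}$, guarantees that a lift function defined on $F$ restricts to a lift function on $\Cr(A) \setminus \{0,1\}$, and the two boundary values $0$ and $1$ can always be supplied by hand.

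First I would define $h_0 \colon \Cr(A) \to \mathbb{P}$ by setting $h_0(p) = h(p)$ for $p \in F \cap \Cr(A)$, $h_0(0) = 0$, and $h_0(1) = 1$. This is unambiguous because $\Cr(A) \subseteq F \cup \{0,1\}$ by the $F$-confinement of $A$, and because any partial-field homomorphism sends $0 \mapsto 0$ and $1 \mapsto 1$, so hypothesis (i) continues to hold for the extended values.

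Next I would verify that $h_0$ satisfies the hypotheses of the original Lift Theorem. Hypotheses (i) and (ii) transfer immediately. For the sum-to-$1$ and product-to-$1$ parts of hypothesis (iii) on pairs $p,q \in \Cr(A)$: if either of $p, q$ lies in $\{0,1\}$, the corresponding identity $h_0(p) + h_0(q) = 1$ or $h_0(p)h_0(q) = 1$ is immediate from $h_0(0)=0$ and $h_0(1)=1$; otherwise both $p,q \in F$ and the hypothesis on $h$ applies. For the triple-product hypothesis (iv), any $p,q,r \in \Cr(A)$ with $pqr = 1$ must be nonzero, and any value equal to $1$ can be absorbed using $h_0(1)=1$, reducing to the case where $p,q,r \in F$ where hypothesis (iv) on $h$ applies; the reverse direction is handled symmetrically by applying $\phi$ and using hypothesis (i).

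Applying \cite[Corollary~3.8]{PvZ2010a} with the lift function $h_0$ then produces a $\mathbb{P}$-matrix $A'$ such that $\phi(A')$ is scaling-equivalent to $A$, which is the desired conclusion. The only real subtlety is the bookkeeping around the boundary values $0$ and $1$ in conditions (iii) and (iv); this is routine once one observes that $\phi$ fixes $0$ and $1$ and that the required identities hold trivially in those degenerate cases. Otherwise the argument is a direct translation between two formally equivalent ways of phrasing the lift hypothesis.
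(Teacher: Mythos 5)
Your proposal is correct and follows exactly the route the paper indicates: the paper justifies this theorem in one line by saying it is a reformulation of \cite[Corollary~3.8]{PvZ2010a} obtained by ``taking the restriction of $h$ to $\Cr(A)$ as the lift function,'' which is precisely your construction of $h_0$. Your additional bookkeeping for the values $0$ and $1$ is the routine verification the paper leaves implicit.
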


We are interested in the case where $\mathbb{P}'$ is a finite field $\mathbb{F}=\GF(q)$ for some prime power $q$. In this case, we obtain the following corollary:

\begin{corollary}
  \label{thm:proxy}
  Let $\mathbb{P}$ be a partial field, let $\mathbb{F}$ be a finite field,
  let $\phi : \mathbb{P} \rightarrow \mathbb{F}$ be a partial-field homomorphism,
  let $F = \phi(\mathfrak{F}(\mathbb{P}))$,
  and let $A$ be an $F$-confined $\mathbb{F}$-matrix.
  Suppose that the restriction of $\phi$ to $\mathfrak{F}(\mathbb{P})$ is injective,
  and 
  \begin{enumerate}
    \item for all $p, q\in \mathfrak{F}(\mathbb{P})$,  if $\phi(p)+\phi(q)=1$, then $p+q=1$; and
    \item for all $p,q,r\in \mathfrak{F}(\mathbb{P})$, if $\phi(p)\phi(q)\phi(r)=1$, then $pqr=1$; and 
    \item if $1=-1$ in $\mathbb{F}$, then $1=-1$ in $\mathbb{P}$.
  \end{enumerate}
  Then there exists a $\mathbb{P}$-matrix $A'$ such that $\phi(A')$ is scaling-equivalent to $A$.
\end{corollary}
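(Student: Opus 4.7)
The plan is to deduce the corollary directly from the Lift Theorem (\cref{lift}) applied with $\mathbb{P}'=\mathbb{F}$, by supplying an appropriate lift function $h\colon F\to\mathbb{P}$.  Any partial-field homomorphism sends fundamentals to fundamentals, so $F=\phi(\mathfrak{F}(\mathbb{P}))\subseteq\mathfrak{F}(\mathbb{F})$, which is the domain required by \cref{lift}.  The hypothesis that $\phi|_{\mathfrak{F}(\mathbb{P})}$ is injective, combined with $F$ being its image, makes this restriction a bijection $\mathfrak{F}(\mathbb{P})\to F$.  The first step is to define $h\colon F\to\mathbb{P}$ to be the inverse of this bijection; then $\phi(h(p))=p$ for all $p\in F$ (so condition~(i) of \cref{lift} is immediate), and $h(1)=1$ since $1\in\mathfrak{F}(\mathbb{P})$ and $\phi(1)=1$.

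Next, I would verify the remaining three conditions of \cref{lift} one by one from the numbered hypotheses of the corollary.  For condition~(iii): if $p,q\in F$ satisfy $p+q=1$, then $\phi(h(p))+\phi(h(q))=1$, and hypothesis~(1) upgrades this to $h(p)+h(q)=1$; if instead $pq=1$, then $\phi(h(p))\phi(h(q))\phi(h(1))=1$, and hypothesis~(2) (applied with the third argument equal to $1$) gives $h(p)h(q)h(1)=h(p)h(q)=1$.  For condition~(iv), the forward implication is another direct invocation of hypothesis~(2), while the reverse is immediate from applying the homomorphism $\phi$ to $h(p)h(q)h(r)=1$.  Condition~(ii) is the characteristic-matching clause: since $\mathbb{F}$ is a field, $1+1\in\mathbb{F}$ automatically; the ``only if'' direction of the biconditional $1+1=0$ in $\mathbb{F}\Leftrightarrow 1+1=0$ in $\mathbb{P}$ is exactly hypothesis~(3), and the ``if'' direction is immediate on applying $\phi$.

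The main obstacle I anticipate is careful bookkeeping around condition~(ii), which subtly also requires $1+1\in\mathbb{P}$ when $\mathbb{F}$ has odd characteristic.  This follows because $-1\in\mathbb{P}$ holds by definition of a partial field, and then injectivity of $\phi|_{\mathfrak{F}(\mathbb{P})}$ together with hypothesis~(3) transports enough characteristic information back to $\mathbb{P}$ to rule out the degenerate case.  Once all four hypotheses of \cref{lift} are in hand, the theorem produces a $\mathbb{P}$-matrix $A'$ such that $\phi(A')$ is scaling-equivalent to $A$, which is precisely the conclusion of the corollary.
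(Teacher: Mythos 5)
Your proof takes exactly the same route as the paper: apply \cref{lift} with $\mathbb{P}'=\mathbb{F}$ and take $h$ to be the inverse of the bijection $\phi|_{\mathfrak{F}(\mathbb{P})}\colon\mathfrak{F}(\mathbb{P})\to F$; the paper then simply asserts that conditions (i)--(iv) of \cref{lift} are ``easily seen'' to hold, so your more explicit verification is, if anything, more careful than the published argument. The one place where your reasoning is an assertion rather than a proof is condition (ii): the claim that $1+1\in\mathbb{P}$ (i.e.\ $2\in G\cup\{0\}$) follows from $-1\in\mathbb{P}$ together with injectivity of $\phi|_{\mathfrak{F}(\mathbb{P})}$ and hypothesis (3) does not actually go through as stated --- for instance the regular partial field $(\Z,\{1,-1\})$ with the natural map to $\GF(3)$ satisfies every hypothesis of the corollary yet has $1+1\notin\mathbb{P}$ --- but the paper's proof glosses over precisely the same point, and the issue lies with the paper's reformulation of the Lift Theorem rather than with your argument.
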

\begin{proof}
  We work towards applying \cref{lift} with $\mathbb{P}' = \mathbb{F}$.
  Since the restriction of $\phi$ to $\mathfrak{F}(\mathbb{P})$ is injective and $\phi(\mathfrak{F}(\mathbb{P}))=F$, there is a well-defined function $h : F \rightarrow \mathfrak{F}(\mathbb{P})$ where $h(f) = p$ when $\phi(p) = f$.  Now $h$ is the inverse of $\phi|_{\mathfrak{F}(\mathbb{P})}$, and thus it is easily seen that (i)--(iv) of \cref{lift} are satisfied by the function $h$.
\end{proof}


\begin{corollary} \label{cor:dyad} $M$ is dyadic if and only if $M$ has a $\{2,6,10\}$-confined representation over $\GF(11)$.\end{corollary}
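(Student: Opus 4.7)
The plan is to apply \cref{thm:proxy} with $\mathbb{P} = \mathbb{D}$ and $\mathbb{F} = \GF(11)$, after exhibiting a suitable partial-field homomorphism $\phi : \mathbb{D} \to \GF(11)$. The first step is to identify $\mathfrak{F}(\mathbb{D})$: since every nonzero element of $\mathbb{D}$ has the form $\pm 2^k$ for some integer $k$, a short case analysis of when $1 - \pm 2^k$ is again of this form gives $\mathfrak{F}(\mathbb{D}) = \{0,1,-1,2,1/2\}$. Since $2 \cdot 6 \equiv 1 \pmod{11}$, there is a unique ring homomorphism $\phi : \mathbb{Z}[1/2] \to \GF(11)$, and it satisfies $1/2 \mapsto 6$, $-1 \mapsto 10$, and $2 \mapsto 2$. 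Then $\phi(\mathfrak{F}(\mathbb{D})) = \{0,1,2,6,10\}$, so with $F = \{2,6,10\}$ we have $F \cup \{0,1\} = \phi(\mathfrak{F}(\mathbb{D}))$, matching the definition of $F$-confined with the set appearing in the statement of \cref{thm:proxy}.

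For the forward direction, if $A$ is a $\mathbb{D}$-representation of $M$, then by \cref{homomorphisms}, $\phi(A)$ is a $\GF(11)$-representation of $M$, and the observation preceding \cref{lift} gives $\Cr(\phi(A)) \subseteq \phi(\Cr(A)) \subseteq \phi(\mathfrak{F}(\mathbb{D})) = F \cup \{0,1\}$, so $\phi(A)$ is $F$-confined. The converse follows from \cref{thm:proxy} once its hypotheses are verified. Injectivity of $\phi|_{\mathfrak{F}(\mathbb{D})}$ is immediate because the images $\{0,1,10,2,6\}$ are five distinct elements of $\GF(11)$; hypothesis (iii) is vacuous since $1 \neq -1$ in $\GF(11)$. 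For hypothesis (i), the only pairs of images summing to $1 \pmod{11}$ are $\{0,1\}$, $\{2,10\}$, and $\{6,6\}$, whose preimage sums $0+1$, $2+(-1)$, and $1/2 + 1/2$ all equal $1$ in $\mathbb{D}$. For hypothesis (ii), one checks that the only multisets of three nonzero images with product $1 \pmod{11}$ are $\{1,1,1\}$, $\{1,10,10\}$, and $\{1,2,6\}$, which lift to products $1 \cdot 1 \cdot 1$, $1 \cdot (-1)(-1)$, and $1 \cdot 2 \cdot (1/2)$, each equal to $1$ in $\mathbb{D}$. Hence \cref{thm:proxy} produces a $\mathbb{D}$-matrix $A'$ with $\phi(A')$ scaling-equivalent to the given representation, so $M = M[A']$ is dyadic.

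The only real content here is the finite enumeration for conditions (i) and (ii); since $|\mathfrak{F}(\mathbb{D})| = 5$, there are only a handful of pairs and triples to check, and the verification is routine. I do not anticipate a genuine obstacle: the result is essentially the observation that $\phi$ behaves rigidly enough on fundamentals for \cref{thm:proxy} to apply, with the specific choice of target field $\GF(11)$ chosen precisely so that $2$ is invertible and so that $\phi$ separates the five fundamentals of $\mathbb{D}$.
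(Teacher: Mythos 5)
Your proof is correct and takes essentially the same route as the paper: identify $\mathfrak{F}(\mathbb{D})=\{0,1,-1,2,2^{-1}\}$, map into $\GF(11)$ via $2\mapsto 2$, and apply \cref{thm:proxy} after a finite check of its hypotheses (which the paper leaves implicit and you carry out explicitly, correctly). No discrepancies.
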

\proof Recall that $\mathfrak{F}(\mathbb{D}) \setminus \{0,1\} =\{-1,2, 2^{-1}\}$ \cite{vanZwam2009}.
Consider the partial-field homomorphism $d: \mathbb{D}\rightarrow \GF(11)$ defined by $d(2)=2$, $d(-1)=10$, $d(2^{-1})=6$. A finite check suffices to verify that the conditions of the theorem are satisfied for $(\mathbb{P}, \mathbb{F}, \phi) =  (\mathbb{D}, \GF(11), d)$, and that then $F = \{2,6,10\}$. The corollary follows.\endproof
A finite check reveals that we cannot take a smaller finite field $\mathbb{F}$ which admits a partial-field homomorphism $\phi: \mathbb{D}\rightarrow \mathbb{F}$ to take the role of $\GF(11)$ in this corollary.  For example, if we take $\mathbb{F}=\GF(7)$, then  $\phi(2)\phi(2)\phi(2)=1$, but $2\cdot 2\cdot 2\neq 1$.

Let $\mathbb{P}$ be a partial field.
For a finite field $\mathbb{F}$ and partial-field homomorphism $\phi : \mathbb{P} \rightarrow \mathbb{F}$, we say that $(\mathbb{F},\phi)$ is a \emph{proxy} for $\mathbb{P}$ if $\phi$ can be lifted in the sense of \cref{thm:proxy}.
For example, the proof of \cref{cor:dyad} shows that $(\GF(11),d)$ is a proxy for $\mathbb{D}$.

\Cref{fig:conff} lists several 
partial field proxies
(see \cite[Appendix~A]{PvZ2010b} for any partial fields undefined here).
These were found by an exhaustive search (by computer), trying each prime $p$, in order, until the desired homomorphism was found.
Note that, with the exception of $\mathbb{H}_4$ and $\mathbb{H}_5$, these are the smallest finite fields of prime order for which such a homomorphism exists (for these two partial fields, the search was time consuming, so we started it at a large prime).

\begin{table}[htb]
$\begin{array}{lll}
\text{Partial field }&\text{Finite Field }&\text{Partial field homomorphism}\\
\hline
\mathbb{S}& \GF(7) & \zeta\mapsto 3\\
 \mathbb{D}&\GF(11) & 2\mapsto 2\\
 \mathbb{G}&\GF(19) & \tau\mapsto 5\\
 \mathbb{U}_1& \GF(23) & \alpha\mapsto 5\\
 \mathbb{H}_2& \GF(29) & i\mapsto 12\\
 \mathbb{K}_2&\GF(73) & \alpha\mapsto 15\\
 \mathbb{H}_3& \GF(151) & \alpha\mapsto 4\\
 \mathbb{P}_4& \GF(197) & \alpha\mapsto 31\\
 \mathbb{U}_2&\GF(211) & \alpha\mapsto 4, \beta\mapsto 44\\
 \mathbb{H}_4& \GF(947) &\alpha\mapsto 272, \beta\mapsto 928\\
  \mathbb{H}_5& \GF(3527) &\alpha\mapsto 1249, \beta\mapsto 295, \gamma\mapsto 3517\\
\end{array}$
\caption{\label{fig:conff} Several proxies for partial fields.}
\end{table}

Each of the partial fields listed in Table \ref{fig:conff} has finitely many fundamentals. There necessarily exists a finite field proxy for such partial fields. To establish this, we will need the following fact.
\begin{lemma} \label{lem:fgfield} Let $R=\Z[X_1,\ldots, X_k]$, and let $J$ be a maximal ideal of $R$. Then $R/J$ is a finite field.
\end{lemma}

\proof  As $J$ is a maximal ideal of the ring $R$, $\mathbb{F}:=R/J$ is a field. 

Suppose that $\mathbb{F}$ is a field of characteristic $0$. Then the prime field $S$ of $\mathbb{F}$ is isomorphic to $\mathbb{Q}$.
$\mathbb{F}$ is finitely generated as an algebra over $\Z$, since  
 $$\mathbb{F}= \Z[X_1,\ldots, X_k]/J= \Z[a_1,\ldots, a_k]$$ where $a_i$ is the residue class of $X_i$ modulo $J$. Since $S\supseteq \Z$, $\mathbb{F}$ is also finitely generated as an algebra over the field $S$. By Zariski's Lemma \cite[Proposition 7.9]{AM1969}, it follows that $\mathbb{F}$ is a finite field extension of $S$.  So $\Z\subseteq S\subseteq \mathbb{F}$,  $\mathbb{F}$ is finitely generated as an algebra over $\Z$, and $\mathbb{F}$ is finitely generated as a module over $S$. By the Artin-Tate Lemma \cite[Proposition 7.8]{AM1969}, it then follows that $S\cong \mathbb{Q}$ is finitely generated as an algebra over $\Z$. Say, $\mathbb{Q}=\Z[t_1,\ldots, t_m]$ where $t_i=p_i/q_i$, with $p_i, q_i\in \Z$, and $q_i\neq 0$. 
Pick any prime $p$ that does not divide $q_i$ for any $i$. As $1/p\in \mathbb{Q}=\Z[t_1,\ldots, t_m]$, there is an integer polynomial $r\in \Z[X_1,\ldots, X_m]$ so that $1/p = r(t_1,\ldots, t_m)$. It follows that there exist integers $u,v\in \Z$ such that $1/p=u/v$ and $v$ is a power of $\prod_i q_i$. Then $v=up$, but $p$ does not divide $v$, a contradiction. 

So $\mathbb{F}$ is a field of characteristic $p>0$, that is, $p\in J$. Then 
$$\mathbb{F}=\Z[X_1,\ldots, X_k]/J=\GF(p)[X_1,\ldots, X_k]/J'=\GF(p)[b_1,\ldots, b_k],$$
 where $b_i$ is the residue class of $X_i$ modulo $J'$, and $J'\subseteq \GF(p)[X_1,\ldots, X_k]$ is $J$ modulo $p$.  So $\mathbb{F}$ is finitely generated as an algebra over $\GF(p)$. By Zariski's Lemma \cite[Proposition 7.9]{AM1969}, it follows that $\mathbb{F}$ is a finite field extension of $\GF(p)$. Then $\mathbb{F}=\GF(p^k)$ for some integer $k$, as required.
 \endproof
Lemma \ref{lem:fgfield} is perhaps not surprising to anyone familiar with the fundamentals of commutative algebra, but at the same time it is not elementary. We thank Rob Eggermont for providing us with a short proof (indeed, with three short proofs).

\begin{theorem} Let $\mathbb{P}$ be a partial field with  finitely many fundamentals. Then there exists a finite field $\mathbb{F}$ and homomorphism $\phi : \mathbb{P} \rightarrow \mathbb{F}$, so that  $(\mathbb{F},\phi)$ is a proxy for $\mathbb{P}$.
\end{theorem}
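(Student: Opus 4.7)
The plan is to construct $\mathbb{F}$ as a finite residue field of a suitable localization of the finitely generated $\Z$-algebra $R_0:=\Z[\mathfrak{F}(\mathbb{P})]\subseteq R$. Since $\mathfrak{F}(\mathbb{P})$ is closed under associates and therefore contains $1/f$ for every nonzero fundamental $f$, every nonzero fundamental is already a unit in $R_0$. Hence $R_0$ contains the multiplicative group generated by $\mathfrak{F}(\mathbb{P})\setminus\{0\}$, and in the partial fields under consideration this group is $G$; thus $\mathbb{P}\subseteq R_0$, and any ring homomorphism $R_0\to\mathbb{F}$ automatically restricts to a partial-field homomorphism $\mathbb{P}\to\mathbb{F}$.

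Next, I would encode the hypotheses of \cref{thm:proxy} as the non-vanishing in $\mathbb{F}$ of a finite list $T\subseteq R_0\setminus\{0\}$, consisting of: the differences $f-f'$ for distinct $f,f'\in\mathfrak{F}(\mathbb{P})$ (forcing injectivity of $\phi|_{\mathfrak{F}(\mathbb{P})}$); the elements $f+f'-1$ for $f,f'\in\mathfrak{F}(\mathbb{P})$ with $f+f'\ne 1$ in $R$; the elements $fgh-1$ for $f,g,h\in\mathfrak{F}(\mathbb{P})$ with $fgh\ne 1$ in $R$; and the element $2$ when $2\ne 0$ in $R$. Forming the localization $R_0[T^{-1}]$, and assuming $0$ is not a product of elements of $T$, this is a nonzero finitely generated $\Z$-algebra. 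Any maximal ideal $\mathfrak{M}$ of $R_0[T^{-1}]$ pulls back, via a presentation $R_0[T^{-1}]=\Z[Y_1,\ldots,Y_m]/I$, to a maximal ideal of $\Z[Y_1,\ldots,Y_m]$; by \cref{lem:fgfield} the residue field $\mathbb{F}:=R_0[T^{-1}]/\mathfrak{M}$ is a finite field. The composite ring homomorphism $\psi\colon R_0\to R_0[T^{-1}]\to\mathbb{F}$ sends every $t\in T$ to a unit of $\mathbb{F}$, so its restriction $\phi:=\psi|_{\mathbb{P}}$ is a partial-field homomorphism satisfying the hypotheses of \cref{thm:proxy}, making $(\mathbb{F},\phi)$ a proxy for $\mathbb{P}$.

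The main obstacle is justifying that $0$ is not a product of elements of $T$---equivalently, that $\prod_{t\in T}t$ is non-nilpotent in $R_0$. When $R$ is an integral domain (which covers all partial fields in \cref{fig:conff}), this is immediate. In full generality one must exploit the partial-field axioms (every nonzero element of $\mathbb{P}$ is a unit) together with the Jacobson property of finitely generated $\Z$-algebras (so that the nilradical equals the Jacobson radical); this, along with the related issue that $G$ need not a priori be generated by $\mathfrak{F}(\mathbb{P})\setminus\{0\}$, is where the argument requires the most care.
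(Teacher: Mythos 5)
Your proposal is essentially the paper's own proof: localizing $R_0$ at the finite set $T$ is precisely the paper's construction of adjoining one fresh variable per element to be inverted together with the relation $tX_t-1$, then passing to a maximal ideal and invoking \cref{lem:fgfield} to conclude the residue field is finite, with the conditions of \cref{thm:proxy} verified because each $t\in T$ maps to a unit. The two issues you flag at the end are dealt with in the paper only by assuming at the outset that $G$ is generated by $\mathfrak{F}(\mathbb{P})$ and $R=\Z[G]$, and by asserting that the ideal generated by the relations $tX_t-1$ is proper on the grounds that each generator uses its own variable --- which is exactly your non-nilpotence condition in disguise (immediate when $R$ is a domain, as for all partial fields considered here) --- so your treatment of that point is, if anything, the more careful one.
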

\proof Let $\mathbb{P}=(R,G)$ be a partial field such that $|\mathfrak{F}(\mathbb{P})|<\infty$. We may assume that $G$ is generated by $\mathfrak{F}(\mathbb{P})$ and that $R=\Z[G]$. 
Note that under these simplifying assumptions there is an ideal $I$ of $\Z[W]$, where $W:=\{W_f: f\in \mathfrak{F}(\mathbb{P})\}$, so that $R=\Z[W]/I$.

Consider the ring $S:=R[X, Y, Z]$ where $X, Y, Z$ are the collections of variables 
$$X:=\{X_{pq}: p, q\in \mathfrak{F}(\mathbb{P})\}\cup\{X_{11}\}, ~Y:=\{Y_{pq}: p, q\in \mathfrak{F}(\mathbb{P})\cup\{0\}, p+q\neq 1\}$$
and $Z:= \{Z_{pqr}: p, q, r\in \mathfrak{F}(\mathbb{P})\cup\{1\}, pqr\neq 1\}$.
Let $J'$ be the ideal of $S$ generated by
$$\{(p-q)X_{pq}-1: p, q\in \mathfrak{F}(\mathbb{P}), p\neq q\}$$
$$\{(p+q-1)Y_{pq}-1: \mathfrak{F}(\mathbb{P})\cup\{0\}, p+q\neq 1\}$$
  $$\{(pqr-1)Z_{pqr}-1: p, q,r\in \mathfrak{F}(\mathbb{P})\cup\{1\}, pqr\neq 1\}$$
 and the generator $2X_{11}-1$ if $1\neq -1$ in $\mathbb{P}$. Since each of the polynomials generating $J'$ uses a variable unique to that generator, the ideal $J'$ is proper, i.e. $1\not\in J'$.
 
Let $J$ be a maximal ideal of $S$ containing $J'$. As $S$ is commutative and $J$ is maximal,  $\mathbb{F}:=S/J$ is a field. Since $R=\Z[W]/I$, we have $S=R[X,Y,Z]=\Z[W,X,Y,Z]/I$ and $\mathbb{F}=S/J=\Z[W,X,Y,Z]/(I+J)$. Finally since $ \mathfrak{F}(\mathbb{P})$ is finite, each set of variables $W,X,Y,Z$ is finite. Then $\mathbb{F}$ is a finite field by Lemma \ref{lem:fgfield}.

Let $\phi: R\rightarrow \mathbb{F}$ be the restriction to $R$ of the natural ring homomorphism $\psi:S\rightarrow S/J=\mathbb{F}$. We verify that $(\mathbb{F},\phi)$ is a proxy for $\mathbb{P}$.
Since $\phi$ is a ring homomorphism, it is necessarily a partial field homomorphism. 
Moreover, $\phi$ is injective on $\mathfrak{F}(\mathbb{P})$, for if $\phi(p)=\phi(q)$ for some distinct $p, q\in \mathfrak{F}(\mathbb{P})$, then we get the contradiction 
$$-1=(\psi(p)-\psi(q))\psi(X_{pq})-1=\psi((p-q)X_{pq}-1)\in \psi(J)=\{0\}.$$
Second, if $p+q\neq 1$ but $\phi(p)+\phi(q)= 1$ then 
$$-1=(\psi(p)+\psi(q)-1)\psi(Y_{pq})-1=\psi((p+q-1)Y_{pq}-1)\in \psi(J)=\{0\},$$
a contradiction. Third, if $\phi(p)\phi(q)\phi(r)= 1$ when $pqr\neq 1$ we get 
$$-1=(\psi(p)\psi(q)\psi(r)-1)\psi(Z_{pqr})-1=\psi((pqr-1)Z_{pqr}-1)\in \psi(J)=\{0\},$$
a contradiction. Finally, if $1\neq -1$ in $\mathbb{P}$ then  $1\neq -1$ in $\mathbb{F}$, for otherwise we get the contradiction $-1=(\psi(1)+\psi(1))\psi(X_{11})-1=\psi(2X_{11}-1)\in \psi(J)=\{0\}$.
\endproof

\section{Implementation details}
\label{impl}

Our implementation of these computations was written using SageMath 8.1, making extensive use of the Matroid Theory library.
Computations were run in a virtual machine on an Intel Xeon E5-2690 v4 64-bit x86 microprocessor operating at 2.6GHz, with 4 cores and 23GB of memory available.

Let $\mathbb{P} \in \{\mathbb{D}, \mathbb{U}_2\}$; 
we want to find excluded minors of size at most $n$ for the class of $\mathbb{P}$-representable matroids $\mathcal{M}(\mathbb{P})$.
Let $\mathcal{N}$ be a set of strong $\mathbb{P}$-stabilizers such that each $N \in \mathcal{N}$ is not near-regular.
In what follows, we use $\widetilde{\mathcal{M}}_\mathcal{N}(\mathbb{P})$ to denote the set of all $3$-connected matroids in $\mathcal{M}(\mathbb{P})$ with an $\mathcal{N}$-minor.

We generate all matroids in $\widetilde{\mathcal{M}}_\mathcal{N}(\mathbb{P})$ of size at most $n$.
To find the excluded minors of size $n$, our basic approach is as follows. 
First, find all $3$-connected extensions of $(n-1)$-element matroids in $\widetilde{\mathcal{M}}_\mathcal{N}(\mathbb{P})$; second, filter out those isomorphic to an $n$-element matroid in $\widetilde{\mathcal{M}}_\mathcal{N}(\mathbb{P})$; finally, filter out those that contain, as a minor, an excluded-minor for $\mathcal{M}(\mathbb{P})$ of size at most $n-1$.

\subsection{Restricting to ternary or quaternary excluded minors}
\label{linrestrict}
As we are dealing with a partial field $\mathbb{P} \in \{\mathbb{D}, \mathbb{U}_2\}$, which has a partial-field homomorphism to either $\GF(3)$ or $\GF(4)$, the efficiency of the first step can be improved using the excluded-minor characterisations for ternary and quaternary matroids.

\begin{lemma}
  \label{onlyquaternary}
  Let $M$ be an excluded minor for the class of $2$-regular matroids.
  If $|E(M)| \ge 9$, then $M$ is 
  quaternary.
\end{lemma}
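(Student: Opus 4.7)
The plan is to use the excluded-minor characterisation of $\GF(4)$-representable matroids given by \cref{gf4minors}, together with the fact that every $2$-regular matroid is $\GF(4)$-representable (as stated just after the definition of $\mathbb{U}_2$, via \cite[Corollary~3.1.3]{Semple1998}, since $|\GF(4)|\ge 4$).

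Suppose, for contradiction, that $M$ is an excluded minor for the class of $2$-regular matroids with $|E(M)|\ge 9$, yet $M$ is not quaternary. By \cref{gf4minors}, $M$ has a minor $N$ isomorphic to one of
$$U_{2,6},\; U_{4,6},\; P_6,\; F_7^-,\; (F_7^-)^*,\; P_8,\; P_8^=.$$
Every proper minor of $M$ is $2$-regular, by minimality, and hence $\GF(4)$-representable. Since each of the matroids listed above is \emph{not} $\GF(4)$-representable, $N$ cannot be a proper minor of $M$; therefore $N=M$. But every matroid in this list has at most $8$ elements, contradicting $|E(M)|\ge 9$.

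Hence no such $M$ exists, and any excluded minor for the class of $2$-regular matroids on at least $9$ elements is quaternary. There is no real obstacle here; the argument is a direct two-line application of \cref{gf4minors} combined with the $\GF(4)$-representability of $2$-regular matroids.
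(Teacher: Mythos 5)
Your proposal is correct and follows essentially the same argument as the paper: both invoke \cref{gf4minors} to obtain a minor $N$ among the seven excluded minors for $\GF(4)$, then use the size bound $|E(N)|\le 8$ together with the fact that $2$-regular matroids are quaternary to reach a contradiction. The only difference is a cosmetic reordering of where the contradiction lands (you conclude $N=M$ and contradict the element count, while the paper concludes $N$ is a proper minor and contradicts $2$-regularity of $N$).
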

\begin{proof}
  Suppose $|E(M)| \ge 9$ and, towards a contradiction, that $M$ is not $\GF(4)$-representable.
  Then $M$ has a minor $N$ isomorphic to one of the seven excluded minors for $\GF(4)$ (see \cref{gf4minors}).
  Since each of these excluded minors has at most eight elements, $M$ contains $N$ as a proper minor.  But $M$ is an excluded minor, so $N$ is $2$-regular; a contradiction.
\end{proof}

The following lemma follows, in a similar manner, from the excluded-minor characterisation of ternary matroids.
\begin{lemma}
  \label{onlyternary}
  If $M$ is an excluded minor for dyadic matroids with $|E(M)| \ge 8$, then $M$ is ternary.
\end{lemma}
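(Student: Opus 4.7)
The plan is to mirror, essentially verbatim, the argument given for \cref{onlyquaternary}, replacing the role of the excluded minors for $\GF(4)$-representable matroids with the excluded minors for $\GF(3)$-representable matroids, and replacing $2$-regular by dyadic. The key ingredients I will use are: (a) dyadic matroids are $\GF(3)$-representable (this is stated in the excerpt, since a matroid is dyadic iff it is $\GF(3)$- and $\GF(5)$-representable); and (b) the Bixby--Seymour--Reid excluded-minor characterisation of ternary matroids, whose excluded minors are $U_{2,5}$, $U_{3,5}$, $F_7$, and $F_7^*$, each of which has at most $7$ elements.

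I would proceed by contradiction: assume $M$ is an excluded minor for dyadic matroids with $|E(M)| \ge 8$, and suppose for contradiction that $M$ is not ternary. Then $M$ has a proper minor $N$ isomorphic to one of $U_{2,5}$, $U_{3,5}$, $F_7$, $F_7^*$, since each of these has at most $7$ elements and $|E(M)| \ge 8$. Because $M$ is an excluded minor for the class of dyadic matroids and $N$ is a proper minor, $N$ is dyadic, and hence ternary. But $N$ is an excluded minor for the class of ternary matroids, so $N$ is not ternary, a contradiction.

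The only substantive content beyond the template for \cref{onlyquaternary} is the appeal to the excluded-minor characterisation of ternary matroids and the fact that dyadic implies ternary; neither requires any real work given the results already quoted in the excerpt. There is no obstacle of note: the proof is a straightforward two-line argument, and I expect it to run to just a few sentences.
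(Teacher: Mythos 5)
Your proof is correct and is exactly the argument the paper intends: the paper gives no explicit proof, stating only that the lemma ``follows, in a similar manner, from the excluded-minor characterisation of ternary matroids,'' and your adaptation of the proof of \cref{onlyquaternary} (using that dyadic matroids are ternary and that the excluded minors for $\GF(3)$ are $U_{2,5}$, $U_{3,5}$, $F_7$, $F_7^*$, each on at most $7$ elements) is precisely that argument.
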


By \cref{onlyquaternary,onlyternary}, at the first step of our procedure for finding excluded minors, we need only consider ternary or quaternary $3$-connected extensions of $(n-1)$-element matroids in $\widetilde{\mathcal{M}}_\mathcal{N}(\mathbb{P})$.
We can further reduce the number of potential excluded minors to consider using splicing, which we explain in \cref{sec-splic}.

\subsection{Generating \texorpdfstring{$\mathbb{P}$}{P}-representable matroids}
\label{gen-iso}

To simulate generating a $\mathbb{P}$-representable matroid, we use 
partial field proxies,
as described in \cref{preconfine}.
That is, we find a prime $p$, and partial-field homomorphism $\phi : \mathbb{P} \rightarrow \GF(p)$, such that a matroid is $\mathbb{P}$-representable if and only if it has a $\phi(\mathfrak{F}(\mathbb{P}))$-confined representation over $\GF(p)$ (see \cref{thm:proxy} and \cref{fig:conff}).
Then, to find $\mathbb{P}$-representable single-element extensions of a matroid with $\mathbb{P}$-representation $A$, we can find single-element extensions of $\phi(A)$ with a $\GF(p)$-representation whose cross ratios are in $\phi(\mathfrak{F}(\mathbb{P}))$.

For a class $\mathcal{M}(\mathbb{P})$ with a set of strong $\mathbb{P}$-stabilizers $\mathcal{N}$, 
we generate a representative $M$ of each isomorphism class in $\widetilde{\mathcal{M}}_\mathcal{N}(\mathbb{P})$ consisting of matroids of size at most $n$.
%

Suppose we have generated all matroids in $\widetilde{\mathcal{M}}_\mathcal{N}(\mathbb{P})$ of size at most $n-1$ (up to isomorphs).
Initially, if $n_0$ is the size of the smallest matroid in $\mathcal{N}$, then $n = n_0 +1$.
Let $M[A]$ be a $\mathbb{P}$-represented matroid.
We say that the $\mathbb{P}$-represented matroid $M[A | e]$, for some column vector $e$ with entries in $\mathbb{P}$, is a \emph{linear extension} of $M[A]$.
For each $(n-1)$ element $\mathbb{P}$-represented matroid, we generate all simple linear extensions (where the representations have the appropriate cross ratios; this functionality is provided by the function 
\texttt{LinearMatroid.linear\char`_extensions()} in SageMath).
Note that each of these simple matroids is in fact $3$-connected (by \cite[Proposition~8.2.7]{oxley}).
After closing this set under duality, and adding any $n$-element matroid in $\mathcal{N}$, the set consists of all $n$-element matroids in $\widetilde{\mathcal{M}}_\mathcal{N}(\mathbb{P})$,
%
by \cref{seysplitcorr} and
since each matroid in $\mathcal{N}$ is a strong $\mathbb{P}$-stabilizer.


\subsection{Isomorph filtering}
\label{sec-iso}

We use an isomorphism invariant, which can be efficiently computed, to distinguish matroids that can be easily identified as non-isomorphic.  Two matroids with different values for the invariant are non-isomorphic; whereas two matroids with the same value for the invariant require a full isomorphism check.
The isomorphism invariant we use is provided by the function \verb|BasisMatroid._bases_invariant()| in SageMath, and is based on the incidences of groundset elements with bases.
%

As $n$ increases, we have to deal with more matroids than can be loaded in memory at once.
Thus, to filter isomorphic matroids, we use a batched two-pass approach.
We consider the matroids in batches of an appropriate size so that an entire batch can be kept in memory at once.  
First, batch by batch, we compute a hash of the matroid invariant 
for each matroid in the batch, and write the matroids to disk, stored in $g$ groups, grouped by the hash modulo~$g$.
(The value of $g$ is chosen to ensure all matroids in a group can also be loaded in memory at once.)
Call the hash of the invariant the \emph{raw hash}, and call the hash modulo~$g$ the \emph{hash~mod}.
Then, in turn, we load each of the $g$ groups; that is, for each $i \in \{0,1,\dotsc,g-1\}$, we load all matroids whose hash~mod is $i$.  Within each group, isomorphs are filtered by isomorphism checking those matroids with the same raw hash.

\subsection{Minor checking}
\label{sec-minor}

Let $M$ and $N$ be matroids.
To check if $M$ has a minor isomorphic to $N$, we use a simple approach that avoids repetitive computations.
If $|E(N)| = |E(M)|$, then we check if $N$ is isomorphic to $M$; otherwise, for each single-element deletion and contraction of $M$, we recursively check if any of these matroids has an $N$-minor.
However, we cache the result of each minor check (keyed by the isomorphism class), and use cached results when available, to avoid repetition.
Full isomorphism checking is performed only when the isomorphism invariants match, as described in \cref{sec-iso}.

\subsection{Splicing}
\label{sec-splic}


Let $M'$ be a matroid, let $M_e$ be a single-element extension of $M'$ by an element~$e$, and let $M_f$ be a single-element extension of $M'$ by an element~$f$, where $e$ and $f$ are distinct.
Note that $M_e$ and $M_f$ may be isomorphic.
We say that $M$ is a \emph{splice of $M_e$ and $M_f$} if $M \ba e = M_f$ and $M \ba f = M_e$.


Suppose we wish to find the excluded minors of size $n$ for the class 
$\mathcal{M}(\mathbb{P})$.
In order to reduce the number of matroids to consider as potential excluded minors, rather than
generating all extensions of $(n-1)$-element matroids in $\widetilde{\mathcal{M}}_\mathcal{N}(\mathbb{P})$, we can instead generate splices of each pair of $(n-1)$-element matroids in $\widetilde{\mathcal{M}}_\mathcal{N}(\mathbb{P})$ that are extensions of some $(n-2)$-element matroid in $\widetilde{\mathcal{M}}_\mathcal{N}(\mathbb{P})$.
Note that the two matroids in such a pair may be isomorphic.
In order for this splicing process to be exhaustive, we require a guarantee that for any excluded minor $M$, there is (up to duality) some pair $e,f \in E(M)$ such that $M \ba e$, $M \ba f$, and $M \ba e \ba f$ are in $\widetilde{\mathcal{M}}_\mathcal{N}(\mathbb{P})$.  \Cref{detachthm} is such a guarantee when $M$ does not contain any spikey $3$-separators.  We work towards showing that spikey $3$-separators do not appear in an excluded minor $M$ when $M$ is large.

First, there is a subtlety worth noting.
Let $M_x$ and $M'$ be matroids with $E(M_x) = E(M') \cup \{x\}$, and
suppose $M' \cong M_x \ba x$.  Clearly $M'$ has a single-element extension, by an element $x$, that is isomorphic to $M_x$, but there may be more than one distinct extensions with this property, due to automorphisms of $M_x$.
To obtain all splices, 
it is not enough to consider just one of these extensions. 
%
For each $(n-2)$-element 
matroid $M' \in \widetilde{\mathcal{M}}_\mathcal{N}(\mathbb{P})$, and each
$(n-1)$-element matroid $M_x \in \widetilde{\mathcal{M}}_\mathcal{N}(\mathbb{P})$ such that $M_x \ba x \cong M'$ for some $x \in E(M_x)$,
we keep track of all single-element extensions of $M'$ to a matroid isomorphic to $M_x$; denote these extensions as $\mathcal{X}(M_x)$.  We also maintain, for each matroid $X \in \mathcal{X}(M_x)$, the isomorphism between $M_x \ba x$ and $X\ba x$.
Using this information, for each matroid $M'$,
and each (possibly isomorphic) pair $\{M_e,M_f\} \subseteq \widetilde{\mathcal{M}}_\mathcal{N}(\mathbb{P})$ such that $M_x \ba x \cong M'$ for $x \in \{e,f\}$, and each $X_e \in \mathcal{X}(M_e)$ and $X_f \in \mathcal{X}(M_f)$, we compute the splice of $X_e$ and $X_f$.
For simplicity, we refer to the set of all of these matroids as ``the splices of $M_e$ and $M_f$''.

The following generalises \cite[Lemma 7.2]{BCOSW2018}; as the proof is similar, we provide only a sketch.



\begin{lemma}
  \label{spikeys}
  Let $\mathbb{P}$ be a partial field, let $N$ be a non-binary $3$-connected strong $\mathbb{P}$-stabilizer, and let $M$ be an excluded minor for $\mathcal{M}(\mathbb{P})$, where $M$ has an $N$-minor.
  If $M$ has a \spikey\ $P$ such that at most one element of $E(M)-E(N)$ is not in $P$, then $|E(M)| \le |E(N)| + 5$.
\end{lemma}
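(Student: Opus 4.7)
The plan is to argue by contradiction: suppose $|E(M)| \ge |E(N)| + 6$. Combined with the hypothesis that at most one element of $E(M) - E(N)$ lies outside $P$, this forces $|P - E(N)| \ge 5$, so at least two distinct pairs $K_i, K_j$ of the circuit-partition of $P$ each contain an element outside $E(N)$. I would pick $a \in K_i - E(N)$ and $b \in K_j - E(N)$.

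By property (c) of a spikey $3$-separator, $M \setminus a$ and $M \setminus b$ are $3$-connected; by property (e), $\co(M \setminus a \setminus b)$ is $3$-connected. Each contains an $N$-minor (after dualising, if necessary, to convert an element that is contracted on the way to $N$ into a deleted one — the symmetry between properties (a)--(b) and (d)--(e) makes this legitimate). Since $M$ is an excluded minor for $\mathcal{M}(\mathbb{P})$, all three minors are $\mathbb{P}$-representable. Fixing a $\mathbb{P}$-representation $A_N$ of $N$, the strong $\mathbb{P}$-stabilizer property yields extensions $A_a$ of $M \setminus a$ and $A_b$ of $M \setminus b$; applying the stabilizer property to the $3$-connected common minor $\co(M \setminus a \setminus b)$ allows us, after rescaling, to assume that $A_a$ and $A_b$ agree on all columns indexed by $E(M) - \{a,b\}$.

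I would then splice $A_a$ and $A_b$ into a matrix $A$ on $E(M)$, taking the $a$-column from $A_b$, the $b$-column from $A_a$, and the remaining columns from their common part. Any subdeterminant of $A$ that avoids the $a$-column or the $b$-column is already a $\mathbb{P}$-subdeterminant of $A_a$ or of $A_b$. For the remaining subdeterminants, I would invoke property (b): the set $\{a, a', b, b'\} = K_i \cup K_j$ is a $4$-circuit of $M$, which forces a $\mathbb{P}$-linear dependence among the corresponding four columns whose coefficients are ratios of $3 \times 3$ subdeterminants visible inside $A_a$ or $A_b$. Using multilinearity to expand determinants along this dependence, the terms in which a column is repeated vanish and each remaining term reduces to a subdeterminant of $A_a$ or $A_b$, hence lies in $\mathbb{P}$. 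Thus $A$ is a $\mathbb{P}$-representation of $M$, contradicting that $M$ is an excluded minor.

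The main obstacle is the final step: showing that the implicit $4$-term dependence among the columns $a, a', b, b'$ has coefficients in $\mathbb{P}$, given that each of $A_a$ and $A_b$ only exhibits three of these four columns. This is where the non-binary hypothesis on $N$ is essential — a non-binary strong stabilizer provides sufficient cross-ratio diversity for the scaling freedom in strong stabilization to align $A_a$ and $A_b$ so that the dependence is genuinely a $\mathbb{P}$-relation and not merely one over the ambient ring. Additional bookkeeping is needed when $a'$ or $b'$ happens to lie in $E(N)$ (so that one of the four columns is already pinned down by $A_N$) and to handle the cosimplification in $\co(M \setminus a \setminus b)$, but the same gluing argument carries through.
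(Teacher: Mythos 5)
Your overall strategy matches the paper's: argue by contradiction from $|E(M)| \ge |E(N)|+6$, so $|P-E(N)|\ge 5$; locate $a \in K_i$ and $b \in K_j$ with $i \neq j$ so that (dualising if necessary) $M \ba a$, $M \ba b$ and $\co(M \ba a \ba b)$ are $3$-connected with an $N$-minor; then try to assemble a $\mathbb{P}$-representation of $M$ from representations of $M \ba a$ and $M \ba b$ that agree on the common minor, via the strong-stabilizer hypothesis. Up to that point you are following the same route as the paper, which is itself only a sketch deferring the endgame to the argument of \cite[Lemma 7.2]{BCOSW2018}.

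The gap is in that endgame, and it is where all the difficulty lives. First, even if your spliced matrix $A$ were shown to be a $\mathbb{P}$-matrix, that would not make it a $\mathbb{P}$-representation of $M$: a priori $M[A]$ satisfies only $M[A] \ba a = M \ba a$ and $M[A] \ba b = M \ba b$, and $M[A]$ may disagree with $M$ on sets containing both $a$ and $b$. This is precisely the obstruction every excluded-minor argument of this type must overcome, and you conclude ``$A$ is a $\mathbb{P}$-representation of $M$'' without addressing it. Second, your mechanism for controlling subdeterminants that use both the $a$- and $b$-columns is circular: you invoke the $4$-circuit $K_i \cup K_j = \{a,a',b,b'\}$ of $M$ to impose a linear dependence on the corresponding columns of $A$, but that dependence holds in $A$ only if $A$ already represents $M$ --- in $M\ba a$ the set $\{a',b,b'\}$ is independent, so neither $A_a$ nor $A_b$ exhibits the relation, and the glued matrix need not either. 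The paper escapes this by exploiting the remaining structure of the \spikey: $\{a,b\}$ also lies in a $4$-element cocircuit $C^* \subseteq P$, so any $u \in C^*-\{a,b\}$ is in a series pair of $M \ba a \ba b$, whence $M \ba a \ba b / u$ retains the $N$-minor, $\co(M \ba a \ba b / u)$ is $3$-connected, and $M/u$ is $3$-connected; these facts are exactly the hypotheses fed into \cite[Lemma 7.2]{BCOSW2018}, which carries out the actual contradiction. Your appeal to non-binarity of $N$ as supplying ``cross-ratio diversity'' is too vague to substitute for that argument.
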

\begin{proof}
  Since at most one element of $E(M)-E(N)$ is not in $P$, we have that $|P-E(N)| \ge 5$.
  By dualising, if necessary, we may assume that there are distinct elements $a,b \in P$ such that $M \ba a \ba b$ has an $N$-minor, with $a \in K_i$ and $b \in K_j$ for $i \neq j$, where $\{K_1,\dotsc,K_t\}$ is a partition of $P$ such that $K_{i'} \cup K_{j'}$ is a circuit for all distinct $i',j' \in \{1,\dotsc,t\}$.
  Now $M \ba a$, $M \ba b$ and $\co(M\ba a \ba b)$ are $3$-connected.

  By the definition of a \spikey, the pair $\{a,b\}$ is contained in a $4$-element cocircuit~$C^* \subseteq P$.
  Let $u \in C^*-\{a,b\}$.
  Then $u$ is in a series pair of $M \ba a \ba b$, so $M \ba a \ba b / u$ has an $N$-minor, and $\co(M \ba a \ba b/u)$ is $3$-connected.
  Moreover, $M / u$ is $3$-connected.
  The result then follows using the same argument as in \cite[Lemma 7.2]{BCOSW2018}.
\end{proof}

\begin{lemma}
  \label{splicinglemma}
  Let $\mathbb{P}$ be a partial field, and let $\mathcal{N}$ be a set of non-binary strong $\mathbb{P}$-stabilizers for $\mathcal{M}(\mathbb{P})$.
  Let $M$ be an excluded minor for $\mathcal{M}(\mathbb{P})$ such that $M$ has an $\mathcal{N}$-minor, $|E(M)| \ge 13$, and $|E(M)| \ge |E(N)| + 6$ for each $N \in \mathcal{N}$.
  Then there is a matroid $M'$ that is $\Delta Y$-equivalent to $M$ or $M^*$, and distinct elements $e,f \in E(M')$ such that for each $M'' \in \{M' \ba e \ba f, \, M' \ba e, \, M' \ba f\}$, the matroid $M''$ is $3$-connected, has an $\mathcal{N}$-minor, and $M'' \in \mathcal{M}(\mathbb{P})$.
\end{lemma}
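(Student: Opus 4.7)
The plan is to apply \cref{detachthm} to $M$ with respect to some $N \in \mathcal{N}$ for which $M$ has an $N$-minor, rule out the \spikey{} case via \cref{spikeys}, and upgrade the resulting $N$-detachable pair into the required triple of $3$-connected $\mathbb{P}$-representable matroids with $\mathcal{N}$-minors.

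First I would fix $N \in \mathcal{N}$ with $N$ a minor of $M$. Since $N$ is non-binary we have $|E(N)| \ge 4$, so the hypothesis $|E(M)| \ge |E(N)|+6$ ensures that \cref{detachthm} applies. In case (iii) of that theorem, since $|E(M)| \ge 13$, the \spikey{} $P$ would contain all but at most one element of $E(M) \setminus E(N)$, and \cref{spikeys} would then force $|E(M)| \le |E(N)|+5$, contradicting the hypothesis. Thus we land in case (i) or (ii), obtaining a matroid $M^{\circ}$ (equal to $M$, or produced from $M$ by a single $\Delta$-$Y$ or $Y$-$\Delta$ exchange) with an $N$-minor and an $N$-detachable pair $\{e,f\}$. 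By \cref{deltaYexc}, $M^{\circ}$ is itself an excluded minor for $\mathcal{M}(\mathbb{P})$, so $M^{\circ}$ is $3$-connected and every proper minor of $M^{\circ}$ is $\mathbb{P}$-representable.

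Next I would resolve the deletion/contraction dichotomy of the detachable pair. If $M^{\circ} \ba e \ba f$ is $3$-connected with an $N$-minor, put $M' := M^{\circ}$; otherwise $M^{\circ}/e/f$ is $3$-connected with an $N$-minor, and I take $M' := (M^{\circ})^{*}$. Then $M'$ is $\Delta Y$-equivalent to $M$ or $M^{*}$, is itself an excluded minor (again by \cref{deltaYexc}) and hence $3$-connected, and $M' \ba e \ba f$ is $3$-connected with an $\mathcal{N}$-minor. In the contraction branch, the $N$-minor of $M^{\circ}/e/f$ dualises to an $N^{*}$-minor of $M' \ba e \ba f$, which is an $\mathcal{N}$-minor under the convention—standard in this framework, and manifestly true for the stabilizer sets used for $\mathbb{D}$ and $\mathbb{U}_2$—that $\mathcal{N}$ is closed under matroid duality.

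The $\mathbb{P}$-representability of $M' \ba e$, $M' \ba f$ and $M' \ba e \ba f$ is automatic from $M'$ being an excluded minor, and the $\mathcal{N}$-minor condition is inherited by $M' \ba e$ and $M' \ba f$ from $M' \ba e \ba f$. The essential remaining task—and the main obstacle in the proof—is to show that the single-element deletions $M' \ba e$ and $M' \ba f$ are $3$-connected. I would argue by contradiction: suppose $M' \ba e$ has a $2$-separation $(X,Y)$ with $f \in X$ and $|X|,|Y| \ge 2$. Since $M'$ is simple and cosimple (consequences of $M'$ being $3$-connected), $f$ is not a coloop of $M' \ba e$, so $r(M' \ba e \ba f) = r(M' \ba e)$. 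When $f \in \mathrm{cl}_{M' \ba e}(X \setminus \{f\})$, the partition $(X \setminus \{f\}, Y)$ inherits $\lambda = 1$ in $M' \ba e \ba f$: if $|X| \ge 3$ this is a $2$-separation contradicting $3$-connectivity of $M' \ba e \ba f$, and if $|X|=2$ then $f$ is parallel in $M'$ to the unique element of $X \setminus \{f\}$, producing a $2$-circuit of $M'$ and contradicting $M'$'s $3$-connectivity. Otherwise $f \notin \mathrm{cl}_{M' \ba e}(X \setminus \{f\})$ and the same partition is a $1$-separation of $M' \ba e \ba f$, again impossible. A symmetric argument handles $M' \ba f$. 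This connectivity analysis closely parallels that used in \cite[Lemma~7.2]{BCOSW2018}.
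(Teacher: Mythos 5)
Your proposal is correct and takes essentially the same route as the paper's proof: apply \cref{detachthm}, rule out the \spikey\ case via \cref{spikeys}, pass to the dual to turn a contraction pair into a deletion pair, and use \cref{deltaYexc} for $\mathbb{P}$-representability of the minors. You simply make explicit two points the paper leaves implicit, namely the connectivity argument behind ``$M'\ba e$ and $M'\ba f$ are $3$-connected'' and the reliance on $\mathcal{N}$ being closed under duality.
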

\begin{proof}
  Let $N \in \mathcal{N}$ such that $M$ has an $N$-minor.
  By \cref{detachthm}, either there exists a matroid $M'$ that is $\Delta Y$-equivalent to $M$ or $M^*$ and a pair of elements $\{e,f\}$ such that either $M' \ba e \ba f$ is $3$-connected with an $N$-minor, or $M'$ has a \spikey~$P$.
  In the latter case, as $|E(M)| \ge 13$ there is at most one element of $E(M)-E(N)$ is not in $P$, so, by \cref{spikeys}, $|E(M)| \le |E(N)| + 5$; a contradiction.
  We deduce that there is a pair $\{e,f\}$ such that $M'\ba e \ba f$ is $3$-connected with an $N$-minor.
  It follows that $M'\ba e$ and $M'\ba f$ are $3$-connected with an $N$-minor.
  Moreover, since $M'$ is an excluded minor for the class $\mathcal{M}(\mathbb{P})$, by \cref{deltaYexc}, each of $M' \ba e$, $M'\ba f$, and $M'\ba e\ba f$ is in $\mathcal{M}(\mathbb{P})$.
\end{proof}

As described in \cref{linrestrict},
when $\mathbb{P} = \mathbb{D}$ or $\mathbb{P} = \mathbb{U}_2$,
we may restrict our attention to ternary or quaternary excluded minors respectively; so it suffices to find splices that are ternary or quaternary, respectively.



\subsection{Testing}

Implementations were tested before use.  In particular, the excluded-minor computation routines were checked using the known characterisation for $\GF(4)$ \cite{GGK2000}, and using the known excluded minors for $\GF(5)$-representable matroids on up to $9$ elements \cite{MR2008}.  The excluded minors for dyadic matroids on up to 13 elements have previously been computed by Pendavingh; our results were also consistent with those.
Regarding the generation of matroids in $\mathcal{M}(\mathbb{P})$, the matroids that we generated were consistent with known maximum-sized $\mathbb{P}$-representable matroids for $\mathbb{P} \in \{\mathbb{D}, \mathbb{U}_2\}$ \cite{Kung1990,Kung1988,Semple1998}. 
Our splicing implementation was tested by independently generating all (ternary/quaternary) matroids in $\widetilde{\mathcal{M}}_\mathcal{N}(\mathbb{P})$ with a pair $\{x,y\}$ such that $M\ba x \ba y \in \widetilde{\mathcal{M}}_\mathcal{N}(\mathbb{P})$, and ensuring that these are precisely the matroids obtained by splicing.

\section{Dyadic matroids}
\label{secdyadic}

In this section we present the results of the computation of the excluded minors for dyadic matroids on at most 15 elements.
The next lemma is a consequence of \cref{nr_exminors}, and the subsequent lemma is well known and easy to verify (see \cite[Proposition~3.1]{GOVW1998}, for example).
\begin{lemma}
  \label{nou25u35dy}
  Let $M$ be an excluded minor for the class of dyadic matroids.
  Then, either
  \begin{enumerate}
    \item $M$ 
        has a $\{F_7^-, (F_7^-)^*, P_8\}$-minor, or
    \item $M$ 
        is isomorphic to one of $U_{2,5}$, $U_{3,5}$, $F_7$, $F_7^*$, $\AG(2,3)\ba e$, $(\AG(2,3)\ba e)^*$, and $(\AG(2, 3)\ba e)^{\Delta Y}$.
  \end{enumerate}
\end{lemma}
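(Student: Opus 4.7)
The plan is to invoke the excluded-minor characterisation of near-regular matroids (\cref{nr_exminors}) together with the observation that every near-regular matroid is dyadic. Indeed, a near-regular matroid is representable over every field with at least three elements, and a matroid is dyadic if and only if it is both $\GF(3)$- and $\GF(5)$-representable; so near-regular implies dyadic. Consequently, since the excluded minor $M$ is not dyadic, it is not near-regular, and \cref{nr_exminors} yields a near-regular excluded minor $N$ that is a minor of $M$.

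Next, I would sort the ten matroids listed in \cref{nr_exminors} according to whether or not they are dyadic. The three matroids $F_7^-$, $(F_7^-)^*$, and $P_8$ are dyadic; this can be verified by exhibiting explicit $\mathbb{D}$-representations, which are well known (see, for example, \cite{oxley}). The remaining seven, namely $U_{2,5}$, $U_{3,5}$, $F_7$, $F_7^*$, $\AG(2,3)\ba e$, $(\AG(2,3)\ba e)^*$, and $(\AG(2,3)\ba e)^{\Delta Y}$, are \emph{not} dyadic: the first four fail even to be $\GF(3)$-representable (they are among the excluded minors for the class of ternary matroids), and the last three are ternary but not $\GF(5)$-representable, a standard fact about $\AG(2,3)\ba e$ and its $\Delta Y$-equivalents (and preserved under duality by self-duality of dyadicity).

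With this dichotomy in hand, the conclusion is immediate. If $N \in \{F_7^-, (F_7^-)^*, P_8\}$, then $M$ has an $N$-minor and we are in case (i). Otherwise, $N$ is one of the seven non-dyadic near-regular excluded minors; but every proper minor of $M$ is dyadic by the minimality of $M$, so $N$ cannot be a proper minor, forcing $M \cong N$ and placing us in case (ii).

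The only non-routine ingredient is the classification of which of the ten near-regular excluded minors are dyadic, which I expect to be the main (but still modest) obstacle. This is a finite case check that can be settled by writing down the standard ternary/dyadic representations (or noting non-representability over $\GF(3)$ or $\GF(5)$), all of which are available in the literature.
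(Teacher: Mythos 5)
Your proposal is correct and is essentially the argument the paper intends: the paper states the lemma as "a consequence of \cref{nr_exminors}" without writing out details, and the natural fleshing-out (near-regular implies dyadic, so $M$ contains one of the ten near-regular excluded minors; the seven non-dyadic ones cannot be proper minors of $M$) is exactly what you give, mirroring the paper's written proof of the analogous \cref{nou25u35} for $2$-regular matroids.
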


\begin{lemma}
  The matroids $F_7^-$, $(F_7^-)^*$, and $P_8$ are strong $\mathbb{D}$-stabilizers. 
\end{lemma}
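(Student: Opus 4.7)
The plan is to invoke the partial-field version of Whittle's Stabilizer Theorem, which reduces the strong-stabilizer property for a $3$-connected uniquely representable matroid to a finite check on its $3$-connected single-element extensions and coextensions. By duality (the definition of a strong $\mathbb{P}$-stabilizer is symmetric under $A \mapsto A^T$), $(F_7^-)^*$ is a strong $\mathbb{D}$-stabilizer if and only if $F_7^-$ is, so it suffices to treat $F_7^-$ and $P_8$.

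Fix $N \in \{F_7^-, P_8\}$. The first step is to show that $N$ is uniquely $\mathbb{D}$-representable. Starting from a standard-form representation, the sub-determinant constraints imposed by the bases of $N$ force all entries up to the permitted row/column scaling and automorphisms of $\mathbb{D}$. Using the proxy $(\GF(11),d)$ from \cref{cor:dyad}, this is equivalent to checking that any $\{2,6,10\}$-confined $\GF(11)$-representation of $N$ is scaling-equivalent to a fixed one; a small finite computation settles it for both matroids.

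The second step is to apply Whittle's Stabilizer Theorem to the class $\mathcal{M}(\mathbb{D})$: for a $3$-connected uniquely $\mathbb{P}$-representable matroid $N$, to conclude that $N$ is a strong $\mathbb{P}$-stabilizer it suffices to verify, for every $3$-connected $\mathbb{P}$-representable matroid $M$ obtained from $N$ by a single-element extension or coextension, that every $\mathbb{P}$-representation of $N$ extends uniquely to a $\mathbb{P}$-representation of $M$. Together with \cref{seysplit}, a routine induction then propagates both uniqueness and extendability from $N$ to every $3$-connected $M \in \mathcal{M}(\mathbb{D})$ with an $N$-minor, and the extension statement further passes to non-$3$-connected $M$ by splitting along $2$-separations. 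In proxy form, the single-step check becomes: enumerate all simple $\{2,6,10\}$-confined $\GF(11)$-linear extensions of the fixed representation of $N$, and for each resulting $3$-connected matroid verify that the extending column is determined up to scaling.

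The main obstacle is bookkeeping: the list of $3$-connected $\mathbb{D}$-representable extensions and coextensions of $P_8$ is noticeably larger than for $F_7^-$, and one must also work with the dual representations in the coextension cases. However, each individual verification is elementary linear algebra over $\GF(11)$ with the cross-ratio constraint $\Cr(A) \subseteq \{0,1,2,6,10\}$, and the whole enumeration is comfortably handled by the framework of \cref{impl}; indeed it reuses the same linear-extension routine that drives the main computation of the paper.
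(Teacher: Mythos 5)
The paper does not actually prove this lemma: it is dispatched with the remark that it is ``well known and easy to verify'', citing \cite[Proposition~3.1]{GOVW1998}. Your proposal instead sketches the verification itself, via the partial-field version of Whittle's Stabilizer Theorem together with unique $\mathbb{D}$-representability and the $(\GF(11),d)$ proxy of \cref{cor:dyad}. That is the standard route, and most of the ingredients are right: stabilizers are closed under duality, so $(F_7^-)^*$ does reduce to $F_7^-$; both $F_7^-$ and $P_8$ have universal partial field $\mathbb{D}$ (cf.\ \cref{u2upfs}) and are uniquely $\mathbb{D}$-representable; and since $\mathbb{D}$ has no non-trivial automorphisms, unique representability upgrades ``stabilizer'' to ``strong stabilizer'' essentially for free, because any two $\mathbb{D}$-representations of $N$ differ only by scaling and pivoting, operations that propagate to any extension.

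There is, however, a genuine gap in the reduction: you have mis-stated the Stabilizer Theorem. The finite check it licenses is not over the $3$-connected single-element extensions and coextensions of $N$ alone; one must also check every $3$-connected matroid $M \in \mathcal{M}(\mathbb{P})$ having elements $e,f$ with $M \backslash e / f \cong N$, i.e.\ the $3$-connected single-element extensions of single-element coextensions of $N$ (see Whittle's original theorem, or the partial-field version in van Zwam's thesis). This two-element layer is not implied by the one-element checks, and the theorem fails without it, so the enumeration you describe --- simple $\{2,6,10\}$-confined $\GF(11)$-linear extensions of the fixed representations of $F_7^-$ and $P_8$, plus the dual computation --- does not by itself establish the lemma. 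The repair is routine (add the $|E(N)|+2$ layer to the enumeration, which the machinery of \cref{impl} handles just as easily), but as written the check is incomplete and the conclusion does not follow.
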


The excluded minors for dyadic matroids are known to include 
the seven matroids listed in \cref{nou25u35dy}(ii),
as well as an $8$-element matroid known as $T_8$, a $10$-element matroid known as $N_{1}$, and a $12$-element matroid known as $N_2$ (see \cite[Problem~14.7.11]{oxley}).

We computed an exhaustive list of the excluded minors on at most $15$ elements, finding one more, previously unknown, excluded minor, on $14$ elements.
This matroid, which we call $N_3$, has a reduced $\GF(3)$-representation as follows:
$$\begin{bmatrix}
 1 & 2 & 0 & 0 & 1 & 2 & 2 \\
 2 & 2 & 2 & 0 & 1 & 1 & 2 \\
 0 & 2 & 0 & 0 & 1 & 1 & 2 \\
 0 & 0 & 0 & 0 & 2 & 1 & 2 \\
 1 & 1 & 1 & 2 & 1 & 2 & 2 \\
 2 & 1 & 1 & 1 & 2 & 1 & 1 \\
 2 & 2 & 2 & 2 & 2 & 1 & 0
\end{bmatrix}$$

\begin{theorem}
  \label{dyexprop}
  The excluded minors for dyadic matroids on at most $15$ elements are
  $U_{2,5}$, $U_{3,5}$, $F_7$, $F_7^*$, $\AG(2,3)\ba e$, $(\AG(2,3)\ba e)^*$, $(\AG(2,3)\ba e)^{\Delta Y}$, $T_8$, $N_{1}$, $N_2$, and $N_{3}$.
\end{theorem}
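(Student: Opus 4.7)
The plan is to combine the structural reductions established earlier with a computer-assisted exhaustive search, using the dyadic proxy from \cref{cor:dyad} to work entirely inside $\GF(11)$-representable matroids. By \cref{nou25u35dy}, any excluded minor for the dyadic matroids is either one of the seven matroids listed in (ii)—which we can confirm by direct inspection with \cref{thm:proxy}—or else contains a minor from $\mathcal{N} = \{F_7^-, (F_7^-)^*, P_8\}$, each of which is a $3$-connected strong $\mathbb{D}$-stabilizer and is not near-regular. So it suffices to search inside $\widetilde{\mathcal{M}}_\mathcal{N}(\mathbb{D})$ for any remaining excluded minors on $8,\ldots,15$ elements, and for each candidate outside $\mathcal{M}(\mathbb{D})$ verify minor-minimality.

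The first step is to generate, size by size from $|E(M)| = 8$ up to $15$, all isomorphism classes of matroids in $\widetilde{\mathcal{M}}_\mathcal{N}(\mathbb{D})$. Starting from ternary representations of the three stabilizers, I would produce all simple linear extensions of each $\{2,6,10\}$-confined $\GF(11)$-representation (these are automatically $3$-connected by the strong-stabilizer property and \cref{seysplitcorr}), close under duality, and filter isomorphs using the two-pass hashed procedure of \cref{sec-iso}. For each such $3$-connected dyadic matroid $M$ on $n-1$ elements I then enumerate the ternary $3$-connected single-element extensions $M'$ (invoking \cref{onlyternary} for $n\ge 9$), and this yields the candidate pool of potential excluded minors of size $n$.

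To trim this pool down to something manageable at $n=14,15$, I apply splicing as in \cref{sec-splic}: by \cref{splicinglemma}, every excluded minor $M$ with $|E(M)| \ge 13$ (and hence $|E(M)| \ge |E(N)| + 6$ for each $N \in \mathcal{N}$, since $|E(P_8)|=8$) is $\Delta Y$-equivalent to some $M'$ containing distinct $e,f$ with $M'\ba e\ba f$, $M'\ba e$, $M'\ba f$ all in $\widetilde{\mathcal{M}}_\mathcal{N}(\mathbb{D})$; together with \cref{deltaYexc}, this means we only need the splices of pairs of $(n-1)$-element extensions of a common $(n-2)$-element matroid. Each candidate $M$ is then tested for $\mathbb{D}$-representability by searching for a $\{2,6,10\}$-confined $\GF(11)$-representation; those that are non-dyadic and not isomorphic to any smaller excluded minor (via the cached minor-test of \cref{sec-minor}) are the new excluded minors at size $n$. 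Running this through $n=15$ recovers the seven classical small excluded minors together with $T_8$, $N_1$, $N_2$, and produces exactly one new matroid, $N_3$, with the $\GF(3)$-representation displayed above; the theorem then follows once we confirm (again via \cref{thm:proxy}) that $N_3$ is not dyadic and that deleting or contracting any single element yields a dyadic matroid.

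The main obstacle is practical rather than conceptual: controlling the combinatorial explosion at $n=14,15$, where the number of $3$-connected ternary extensions of the $\widetilde{\mathcal{M}}_\mathcal{N}(\mathbb{D})$-matroids is large enough that naive isomorph-filtering and representability testing are infeasible. The splicing reduction of \cref{splicinglemma} is essential here—without it the candidate count is out of reach—and correctness of the implementation hinges on handling splices carefully across automorphisms of the common minor $M'$, which is precisely what the bookkeeping with $\mathcal{X}(M_x)$ in \cref{sec-splic} is designed to do. Cross-validation against the $\GF(4)$ and $\GF(5)$ searches, and against Pendavingh's previous dyadic computation up to $13$ elements, provides independent evidence that the implementation is correct on all inputs smaller than the new terrain at $n=14,15$.
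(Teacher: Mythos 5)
Your proposal is correct and follows essentially the same route as the paper: reduce via \cref{nou25u35dy} to matroids with a $\{F_7^-,(F_7^-)^*,P_8\}$-minor, generate $\widetilde{\mathcal{M}}_\mathcal{N}(\mathbb{D})$ through the $\GF(11)$ proxy, take ternary single-element extensions for the middle sizes, and switch to splicing plus $\Delta Y$-equivalence for the largest sizes before filtering. The only divergences are harmless: the paper invokes splicing only at $n=15$ (using plain extensions up to $n=14$), and your parenthetical ``$|E(M)|\ge 13$ and hence $|E(M)|\ge|E(N)|+6$'' has the implication backwards, though the hypothesis of \cref{splicinglemma} is in fact met at the sizes $n\in\{14,15\}$ where you apply it.
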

\begin{proof}
  We exhaustively generated all $n$-element dyadic matroids that are not near-regular for $n \le 15$; see \cref{dytable}.

  By \cref{nou25u35dy}, the excluded minors on at most seven elements are $U_{2,5}$, $U_{3,5}$, $F_7$, and $F_7^*$.
  Let $8 \le n \le 14$, and suppose all excluded minors for dyadic matroids on fewer than $n$ elements are known.
  We generated all matroids that are ternary single-element extensions of some $(n-1)$-element dyadic matroid with a $\{F_7^-,(F_7^-)^*,P_8\}$-minor.
  From this list of potential excluded minors, we first filtered out those in our list of $n$-element dyadic matroids, and then also filtered out any matroids that contained, as a minor, any of the excluded minors for dyadic matroids on fewer than $n$ elements.  Each remaining matroid 
  is an excluded minor.
  On the other hand, if $M$ is an $n$-element excluded minor not listed in \cref{nou25u35dy}(ii), then, by \cref{nou25u35dy,onlyternary,seysplitcorr}, this collection of generated matroids contains at least one of $M$ and $M^*$.

  Now let $n = 15$, and again suppose all excluded minors on fewer than $n$ elements are known.
  We generated all $3$-connected ternary 
  splices of a (not-necessarily non-isomorphic) pair of $(n-1)$-element dyadic matroids that are each single-element extensions of an $(n-2)$-element $3$-connected dyadic matroid with a $\{F_7^-,(F_7^-)^*,P_8\}$-minor; call this collection of generated matroids $\mathcal{S}$.
  Since $n \ge |E(P_8)| + 6 = 14$, \cref{splicinglemma} implies that if $M$ is an $n$-element excluded minor, then, for some $M' \in \Delta^{(*)}(M)$, there exists a pair $\{e,f\} \subseteq E(M')$ such that $M' \ba e$, $M' \ba f$, and $M' \ba \{e,f\}$ are $3$-connected and have a $\{F_7^-,(F_7^-)^*,P_8\}$-minor.
  Thus $M' \in \mathcal{S}$.
  (For reference, $\mathcal{S}$ contained 20632781 pairwise non-isomorphic $15$-element rank-$7$ matroids, and 8840124 pairwise non-isomorphic $15$-element rank-$8$ matroids.)
  As before, from this list of potential excluded minors, we filtered out those matroids that were dyadic or contained, as a minor, any of the excluded minors for dyadic matroids on fewer than $n$ elements.
\end{proof}


\begin{table}[htb]
  \begin{tabular}{r|r r r r r r r r r r}
    \hline
$r \ba n$ & 7 & 8 &  9 &  10 &   11 &    12 &     13 &     14 &      15 \\
    \hline                                                              
        3 & 1 & 1 &  1 &     &      &       &        &        &         \\
        4 & 1 & 7 & 24 &  52 &   60 &    44 &     20 &      7 &       2 \\
        5 &   & 1 & 24 & 223 & 1087 &  3000 &   5065 &   5651 &    4553 \\
        6 &   &   &  1 &  52 & 1087 & 10755 &  57169 & 185354 &  398875 \\
        7 &   &   &    &     &   60 &  3000 &  57169 & 540268 & 2986648 \\ 
        8 &   &   &    &     &      &    44 &   5065 & 185354 & 2986648 \\
        9 &   &   &    &     &      &       &     20 &   5651 &  398875 \\ 
       10 &   &   &    &     &      &       &        &      7 &    4553 \\
       11 &   &   &    &     &      &       &        &        &       2 \\
       \hline                                                 
    Total & 2 & 9 & 50 & 327 & 2294 & 16843 & 124508 & 922292 & 6780156 \\
       \hline
  \end{tabular}
  \caption{The number of $3$-connected $n$-element rank-$r$ dyadic matroids with a $\{F_7^-,(F_7^-)^*,P_8\}$-minor, for $n \le 15$.}
  \label{dytable}
\end{table}

It turns out that the list of matroids in \cref{dyexprop} is not the complete list of excluded minors for dyadic matroids.
We also found an excluded minor with 16 elements; we call this matroid $N_4$.
The following is a reduced $\GF(3)$-representation of $N_4$:
$$\begin{bmatrix}
 1 & 0 & 1 & 1 & 1 & 1 & 2 & 1 \\
 0 & 2 & 0 & 0 & 1 & 0 & 0 & 1 \\
 1 & 0 & 2 & 1 & 0 & 1 & 2 & 1 \\
 1 & 0 & 1 & 0 & 0 & 0 & 1 & 0 \\
 1 & 1 & 0 & 0 & 0 & 1 & 0 & 0 \\
 1 & 0 & 1 & 0 & 1 & 1 & 0 & 1 \\
 2 & 0 & 2 & 1 & 0 & 0 & 2 & 1 \\
 1 & 1 & 1 & 0 & 0 & 1 & 1 & 0
\end{bmatrix}$$

We found this matroid by a computer search, as follows.
Observe that the matroids $T_8$, $N_1$, $N_2$, and $N_3$ are self-dual matroids on 8, 10, 12, and 14 elements respectively, and each has a pair of disjoint circuit-hyperplanes.
Starting with the 2986648 $3$-connected rank-8 dyadic non-near-regular matroids on 15 elements, 285488 of these matroids have a circuit-hyperplane whose complement is independent.  Of these, 4875 have at least one $3$-connected ternary extension to a matroid with disjoint circuit-hyperplanes.  There are 288076 such matroids, but 52 are dyadic and 288023 properly contain an excluded minor for dyadic matroids.  The one other matroid is $N_4$.

\begin{table}[htb]
  \begin{tabular}{ c c c c }
    \hline
    $M$ & $\mathbb{P}_M$ & 
                                               $\left|\Delta(M)\right|$ \\
    \hline
    $U_{2,5}$        & $\mathbb{U}_2$ & 
                                               2 \\
    $F_7$            & $\GF(2)$       & 
                                               2 \\
    $\AG(2,3) \ba e$ & $\mathbb{S}$   & 
                                               3 \\
    $T_8$            & $\GF(3)$       & 
                                               1 \\
    $N_1$            & $\GF(3)$       & 
                                               1 \\
    $N_2$            & $\GF(3)$       & 
                                               1 \\
    $N_3$            & $\GF(3)$       & 
                                               1 \\
    $N_4$            & $\GF(3)$ 
                                      & 
                                               1 \\
    \hline
  \end{tabular}
  \caption{Excluded minors for the class of dyadic matroids, and their universal partial fields. 
    We list one representative~$M$ of each $\Delta Y$-equivalence class $\Delta(M)$.%
  }
  \label{dyupfs}
\end{table}


Finally, using \cref{onlyternary,dyexprop}, we observe that with the exception of $U_{2,5}$ and $U_{3,5}$, each excluded minor for the class of dyadic matroids is not $\GF(5)$-representable, so is an excluded minor for the class of $\GF(5)$-representable matroids.
In \cref{dyupfs}, we provide the universal partial field for each of the known excluded minors.  The matroids with universal partial field $\GF(3)$ are representable only over fields with characteristic three.

\section{\texorpdfstring{$2$}{2}-regular matroids}
\label{sec2reg}

We now present the results of the computation of the excluded minors for $2$-regular matroids on at most 15
elements.
The next lemma is a consequence of \cite[Lemmas~5.7 and~5.25]{OSV2000}.

\begin{lemma}
  The matroids $U_{2,5}$ and $U_{3,5}$ are strong $\mathbb{U}_2$-stabilizers. 
\end{lemma}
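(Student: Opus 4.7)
The plan is to derive the lemma directly from the two cited results \cite[Lemmas 5.7 and 5.25]{OSV2000}, by verifying their hypotheses for the pair $(U_{2,5}, \mathbb{U}_2)$ and then transferring the conclusion for $U_{2,5}$ to its dual $U_{3,5}$ via matroid duality. Since the lemma is stated as a ``consequence'' of those results, the task is really to spell out how the generic stabilizer and strong-stabilizer criteria apply in this specific setting.

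For the $\mathbb{U}_2$-stabilizer part, I would use as input the fact (noted earlier in the excerpt) that $\mathbb{U}_2$ is the universal partial field of $U_{2,5}$. This implies that any two $\mathbb{U}_2$-representations of $U_{2,5}$ are algebraically equivalent, so uniqueness of representation (up to the relevant notion of equivalence) holds for the base matroid. Combined with the 3-connectivity of $U_{2,5}$, \cite[Lemma 5.7]{OSV2000} --- which gives a sufficient condition for a 3-connected matroid to stabilize a class of partial-field representable matroids --- then yields that $U_{2,5}$ is a $\mathbb{U}_2$-stabilizer.

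For the strong stabilizer property, the additional requirement is that every $\mathbb{U}_2$-representation of $U_{2,5}$ extends to a $\mathbb{U}_2$-representation of each 3-connected $M \in \mathcal{M}(\mathbb{U}_2)$ with a $U_{2,5}$-minor. This is precisely what \cite[Lemma 5.25]{OSV2000} delivers in this context: using Seymour's Splitter Theorem (\cref{seysplit}), one builds a chain of 3-connected matroids $U_{2,5} = M_0, M_1, \dotsc, M_k = M$ in which each $M_{i+1}$ is a single-element extension or coextension of $M_i$ that still contains a $U_{2,5}$-minor, and one inductively extends a representation of $M_i$ to $M_{i+1}$.

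To pass from $U_{2,5}$ to $U_{3,5}$, I would use that $U_{3,5} \cong U_{2,5}^*$ together with the duality of partial-field representability: if $[I \mid A]$ is a $\mathbb{P}$-representation of $M$, then $[I \mid -A^T]$ is a $\mathbb{P}$-representation of $M^*$, and this correspondence carries scaling equivalence, algebraic equivalence, and the extension property across the duality. Consequently both the stabilizer and strong-stabilizer properties transport from $U_{2,5}$ to $U_{3,5}$. The main delicate point is ensuring that the chain argument underlying \cite[Lemma 5.25]{OSV2000} applies uniformly on the whole class $\mathcal{M}(\mathbb{U}_2)$, since $\mathbb{U}_2$ is a genuine partial field with a non-trivial fundamental set rather than a field; this is exactly the hypothesis-checking that the authors encode by citing the two OSV lemmas in tandem.
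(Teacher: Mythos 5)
The paper offers no written proof here: the lemma is stated as a direct consequence of \cite[Lemmas~5.7 and~5.25]{OSV2000}, and your overall route --- invoke those two lemmas for $U_{2,5}$ and transfer to $U_{3,5}$ by duality --- is the same citation-based argument. The duality step is fine, since $A\mapsto -A^T$ carries $\mathbb{P}$-representations, scaling equivalence, and the extension property across, so $N$ is a strong $\mathbb{P}$-stabilizer if and only if $N^*$ is. Your description of the strong-stabilizer half (build a chain via \cref{seysplit} and extend representations inductively) is also consistent with how such results are proved.

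The one piece of original reasoning you supply, however, does not work as stated. You argue that because $\mathbb{U}_2$ is the universal partial field of $U_{2,5}$, the matroid $U_{2,5}$ is uniquely representable over $\mathbb{U}_2$, and that this (plus $3$-connectivity) yields the stabilizer property. Unique representability of $N$ is neither what the stabilizer definition asks for nor sufficient for it: the stabilizer property concerns whether a representation of the \emph{larger} matroid $M$ is pinned down, up to scaling, by the induced representation of its $N$-minor. For instance, $U_{2,4}$ is uniquely representable over $\GF(5)$ (all cross ratios in $\GF(5)\setminus\{0,1\}$ are associates), yet $U_{2,4}$ is not a $\GF(5)$-stabilizer --- which is precisely why $U_{2,5}$ and $U_{3,5}$ are needed as stabilizers in the quinary setting. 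The actual content of the cited stabilizer lemma is a verification, in the spirit of Whittle's Stabilizer Theorem, that $U_{2,5}$ stabilizes every $3$-connected single-element extension, coextension, and extension-coextension of itself over $\mathbb{U}_2$; no appeal to universality substitutes for that finite check. If you want a self-contained proof rather than a citation, that case analysis is the step you would need to carry out.
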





\begin{lemma}
  \label{nou25u35}
  Let $M$ be an excluded minor for the class of 
  $2$-regular matroids.
  Then, either
  \begin{enumerate}
    \item $M$ has a $\{U_{2,5}, U_{3,5}\}$-minor, or
    \item $M$ is isomorphic to one of $F_7$, $F_7^*$, $F_7^-$, $(F_7^-)^*$, $\AG(2,3)\ba e$, $(\AG(2,3)\ba e)^*$, $(\AG(2, 3)\ba e)^{\Delta Y}$, and $P_8$.
  \end{enumerate}
\end{lemma}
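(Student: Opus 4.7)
The proof strategy directly parallels that of \cref{nou25u35dy}. The crucial starting point is that every near-regular matroid is $2$-regular: by \cref{homomorphisms}, it suffices to exhibit a partial-field homomorphism $\mathbb{U}_1 \to \mathbb{U}_2$, and this is supplied by the map sending the indeterminate $\alpha_1$ of $\mathbb{U}_1$ to $\alpha \in \mathbb{U}_2$, under which $1-\alpha_1 \mapsto 1-\alpha \in \mathfrak{F}(\mathbb{U}_2)$ and $-1 \mapsto -1$, so the assignment extends to the group of units. Therefore no excluded minor $M$ for the class of $2$-regular matroids is near-regular.

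Next, I would apply \cref{nr_exminors} to $M$: it must have a minor $N$ isomorphic to one of the ten excluded minors for near-regular matroids. Under the hypothesis that $M$ has no $\{U_{2,5}, U_{3,5}\}$-minor, the minor $N$ must lie among the eight matroids listed in (ii). Since $M$ is an excluded minor, every proper minor of $M$ is $2$-regular, so if $N$ were a proper minor of $M$ then $N$ would itself be $2$-regular. Hence the lemma reduces to verifying that none of the eight matroids listed in (ii) is $2$-regular, which forces $N \cong M$.

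The remaining task is to check non-$2$-regularity of each of $F_7$, $F_7^*$, $F_7^-$, $(F_7^-)^*$, $\AG(2,3)\ba e$, $(\AG(2,3)\ba e)^*$, $(\AG(2,3)\ba e)^{\Delta Y}$, and $P_8$. For $F_7$ and $F_7^*$ this is immediate: \cref{fig:conff} supplies a partial-field homomorphism $\mathbb{U}_2 \to \GF(211)$, so every $2$-regular matroid is representable in some odd characteristic, whereas $F_7$ and $F_7^*$ are representable only in characteristic two. For the six remaining matroids, I would invoke prior work from the $2$-regular research programme, in particular Semple's thesis~\cite{Semple1998} and the subsequent sequence~\cite{bww3,BCOSW2018,CMWvZ2015,CCCMWvZ2013,CCMvZ2015}, where each of these is known to be an excluded minor for the $2$-regular class.

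The main obstacle is $F_7^-$, $(F_7^-)^*$, and $P_8$, each of which is representable over every field of size at least three, so no characteristic restriction suffices. Here the finer fundamental/cross-ratio structure of $\mathbb{U}_2$ is needed: alternatively to citing the literature, one can compute the universal partial field $\mathbb{P}_N$ of each such $N$ and verify, case-by-case, that no partial-field homomorphism $\mathbb{P}_N \to \mathbb{U}_2$ exists (equivalently, any attempted reduced $\mathbb{U}_2$-representation forces a cross ratio outside $\mathfrak{F}(\mathbb{U}_2)$). Once all eight matroids in (ii) are confirmed non-$2$-regular, $N \cong M$ and the lemma follows.
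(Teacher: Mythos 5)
Your overall strategy is the same as the paper's: note that near-regular implies $2$-regular, apply \cref{nr_exminors} to get one of the eight matroids in (ii) as a minor, and then argue that none of these eight is $2$-regular, so none can occur as a \emph{proper} minor of the excluded minor $M$. That skeleton is correct, and your homomorphism $\mathbb{U}_1 \to \mathbb{U}_2$ justifying the first step is fine.

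However, there is a concrete factual error in your treatment of $F_7^-$, $(F_7^-)^*$, and $P_8$: these are \emph{not} representable over every field of size at least three. Each is representable over a field $\mathbb{F}$ if and only if $\mathbb{F}$ has characteristic other than two; in particular none of them is $\GF(4)$-representable (indeed all three appear in the excluded-minor list for $\GF(4)$ in \cref{gf4minors}). So the characteristic argument you dismiss is exactly what settles these cases, and no appeal to universal partial fields or to the prior literature is needed: since a $2$-regular matroid is representable over every field of size at least four, and each of the eight matroids in (ii) fails to be representable over $\GF(4)$ or over $\GF(5)$ ($F_7$, $F_7^*$ over $\GF(5)$; $F_7^-$, $(F_7^-)^*$, $P_8$ over $\GF(4)$; $\AG(2,3)\ba e$ and its dual and $\Delta Y$-exchange over $\GF(5)$, the last two via \cref{deltaYrep}), none is $2$-regular. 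Your fallback via universal partial fields would also work (e.g.\ $\mathbb{P}_{F_7^-}=\mathbb{D}$ and there is no homomorphism $\mathbb{D}\to\mathbb{U}_2$ since $2$ is not a unit of $\mathbb{U}_2$), so the argument can be repaired, but as written the justification for the hardest-looking cases rests on a false premise and an unnecessary detour.
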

\begin{proof}
  Suppose that $M$ has no $\{U_{2,5},U_{3,5}\}$-minor.
  Since $M$ is not, in particular, near-regular, \cref{nr_exminors} implies that
  $M$ has a minor isomorphic to one of $F_7$, $F_7^*$, $F_7^-$, $(F_7^-)^*$, $\AG(2,3)\ba e$, $(\AG(2,3)\ba e)^*$, $(\AG(2,3)\ba e)^{\Delta Y}$, and $P_8$.

  It is well known that $F_7$ and $F_7^*$ are representable over a field $\mathbb{F}$ if and only if $\mathbb{F}$ has characteristic two; whereas $F_7^*$, $(F_7^-)^*$, and $P_8$ are representable over a field $\mathbb{F}$ if and only if $\mathbb{F}$ does not have characteristic two.
  Moreover, $\AG(2,3) \ba e$ is not $\GF(5)$-representable \cite[Proposition~7.3]{HMvZ2011}, and hence $(\AG(2,3)\ba e)^*$ and $(\AG(2,3)\ba e)^{\Delta Y}$ are also not $\GF(5)$-representable, the latter by \cref{deltaYrep}.
  Since each of these eight matroids is not representable over 
  either $\GF(4)$ or $\GF(5)$, we 
  deduce that $M$ does not contain one of these matroids as a proper minor, so (ii) holds, as required.
\end{proof}

By \cref{nou25u35}, in our search for excluded minors for the class of $2$-regular matroids, we can restrict our focus to matroids with a $\{U_{2,5},U_{3,5}\}$-minor.
%
The matroids $U_{2,6}$, $U_{4,6}$, $P_6$, $P_8$, and $P_8^=$ are not $2$-regular, as they are not $\GF(4)$-representable, 
by \cref{gf4minors}.
Let $F_7^=$ denote the matroid obtained by relaxing a circuit-hyperplane of the non-Fano matroid $F_7^-$, as illustrated in \cref{fanosfig}.
%
%
Recall that $P_8^=$ is obtained from $P_8$ by relaxing disjoint circuit-hyperplanes; let $P_8^-$ denote the matroid obtained by relaxing just one of a pair of disjoint circuit-hyperplanes of $P_8$.
It is known that 
$U_{3,6}$, $F_7^=$ and $(F_7^=)^*$ are not $2$-regular \cite[Lemmas~4.2.4 and~4.2.5]{Semple1998}; and neither is
$P_8^-$ \cite[Section 4.1]{COvZ2018}.
It turns out that all these matroids are excluded minors for the class of $2$-regular matroids. 

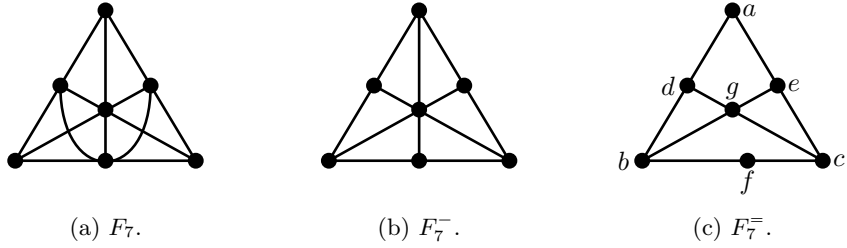
\begin{figure}
  \begin{subfigure}{0.32\textwidth}
    \centering
    \begin{tikzpicture}[rotate=90,yscale=0.8,line width=1pt]
      \tikzset{VertexStyle/.append style = {minimum height=5,minimum width=5}}
      \clip (-0.5,-4.5) rectangle (2.5,-.5);
      \draw (0,-1) -- (2,-2.5) -- (0,-4);
      \draw (1,-1.75) -- (0,-4);
      \draw (1,-3.25) -- (0,-1);
      \draw (0,-1) -- (0,-4);
      \draw (2,-2.5) -- (0,-2.5);
      \draw (1,-3.25) .. controls (-.35,-3.25) and (-.35,-1.75) .. (1,-1.75);

      \SetVertexNoLabel
      \Vertex[x=2,y=-2.5]{a2}
      \Vertex[x=0.68,y=-2.5]{a3}
      \Vertex[x=1,y=-3.25]{a4}
      \Vertex[x=1,y=-1.75]{a5}
      \Vertex[x=0,y=-2.5]{d}
      \Vertex[x=0,y=-1]{e}
      \Vertex[x=0,y=-4]{f}
    \end{tikzpicture}
    \caption{$F_7$.}
  \end{subfigure}
  \begin{subfigure}{0.32\textwidth}
    \centering
    \begin{tikzpicture}[rotate=90,yscale=0.8,line width=1pt]
      \tikzset{VertexStyle/.append style = {minimum height=5,minimum width=5}}
      \clip (-0.5,-4.5) rectangle (2.5,-.5);
      \draw (0,-1) -- (2,-2.5) -- (0,-4);
      \draw (1,-1.75) -- (0,-4);
      \draw (1,-3.25) -- (0,-1);
      \draw (0,-1) -- (0,-4);
      \draw (2,-2.5) -- (0,-2.5);

      \SetVertexNoLabel
      \Vertex[x=2,y=-2.5]{a2}
      \Vertex[x=0.68,y=-2.5]{a3}
      \Vertex[x=1,y=-3.25]{a4}
      \Vertex[x=1,y=-1.75]{a5}
      \Vertex[x=0,y=-2.5]{d}
      \Vertex[x=0,y=-1]{e}
      \Vertex[x=0,y=-4]{f}
    \end{tikzpicture}
    \caption{$F_7^-$.}
  \end{subfigure}
  \begin{subfigure}{0.32\textwidth}
    \centering
    \begin{tikzpicture}[rotate=90,yscale=0.8,line width=1pt]
      \tikzset{VertexStyle/.append style = {minimum height=5,minimum width=5}}
      \clip (-0.5,-4.5) rectangle (2.5,-.5);
      \draw (0,-1) -- (2,-2.5) -- (0,-4);
      \draw (1,-1.75) -- (0,-4);
      \draw (1,-3.25) -- (0,-1);
      \draw (0,-1) -- (0,-4);


      \Vertex[L=$a$,Lpos=0,LabelOut=true,x=2,y=-2.5]{a2}
      \Vertex[L=$g$,Lpos=90,LabelOut=true,x=0.68,y=-2.5]{a3}
      \Vertex[L=$e$,Lpos=0,LabelOut=true,x=1,y=-3.25]{a4}
      \Vertex[L=$d$,Lpos=180,LabelOut=true,x=1,y=-1.75]{a5}
      \Vertex[L=$f$,Lpos=-90,LabelOut=true,x=0,y=-2.75]{d}
      \Vertex[L=$b$,Lpos=180,LabelOut=true,x=0,y=-1]{e}
      \Vertex[L=$c$,Lpos=0,LabelOut=true,x=0,y=-4]{f}
    \end{tikzpicture}
    \caption{$F_7^=$.}
  \end{subfigure}
  \caption{Three of the excluded minors for $2$-regular matroids.}
  \label{fanosfig}
\end{figure}

\begin{figure}
  \begin{tikzpicture}[scale=0.39,line width=1pt]
    \tikzset{VertexStyle/.append style = {minimum height=5,minimum width=5}}
    \draw (-8.5,0.6) -- (6.5,0.6) -- (8.93,6.5) -- (-5.97,6.5) -- (-8.5,0.6);
    \draw (-8.5,0.6) -- (-8.5,-5) -- (6.5,-5) -- (6.5,0.6);

    \Vertex[Lpos=135,LabelOut=true,L=$d$,x=-0.4,y=3.1]{v1}
    \Vertex[Lpos=-85,LabelOut=true,L=$c$,x=1,y=2.3]{v4}
    \Vertex[LabelOut=true,L=$b$,x=3.2,y=3.2]{v6}
    \Vertex[LabelOut=true,L=$a$,x=2.4,y=5.7]{v7}

    \Vertex[Lpos=-60,LabelOut=true,L=$f$,x=-1.2,y=-1.6]{v2}
    \Vertex[Lpos= 85,LabelOut=true,L=$g$,x=-3.0,y=-1.1]{v0}
    \Vertex[Lpos=180,LabelOut=true,L=$h$,x=-4.5,y=-1.9]{v3}
    \Vertex[Lpos=0,LabelOut=true,L=$e$,x=-3.0,y=-4.3]{v5}
    \SetVertexNoLabel
    \tikzset{VertexStyle/.append style = {shape=rectangle,fill=white}}
    \Vertex[x=-6,y=0.6]{u0}
    \Vertex[x=-3,y=0.6]{u1}
    \Vertex[x=0.3,y=0.6]{u2}
    \Vertex[x=4,y=0.6]{u3}

    \Edge(u0)(v5)
    \Edges(v5,u1,v7,u3)
    \Edges(v5,u2,v7)
    \Edges(u1,v6)
    \Edges(u2,v3)

    \Edge(v1)(u3)
    \tikzset{EdgeStyle/.append style = {bend right=18}}
    \Edge(v6)(u0)
    \tikzset{EdgeStyle/.append style = {bend right=10}}
    \Edge(u0)(v2)

    \tikzset{EdgeStyle/.append style = {bend left=15}}
    \Edge(u3)(v3)
  \end{tikzpicture}
  \caption{$P_8$, an excluded minor for $2$-regular matroids.  Relaxing $\{e,f,g,h\}$ results in the matroid $P_8^-$; relaxing both $\{a,b,c,d\}$ and $\{e,f,g,h\}$ results in the matroid $P_8^=$.}
  \label{p8fig}
\end{figure}
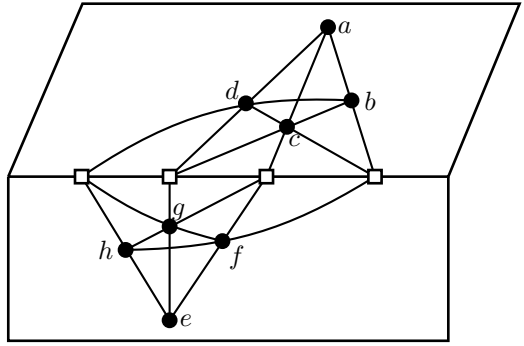

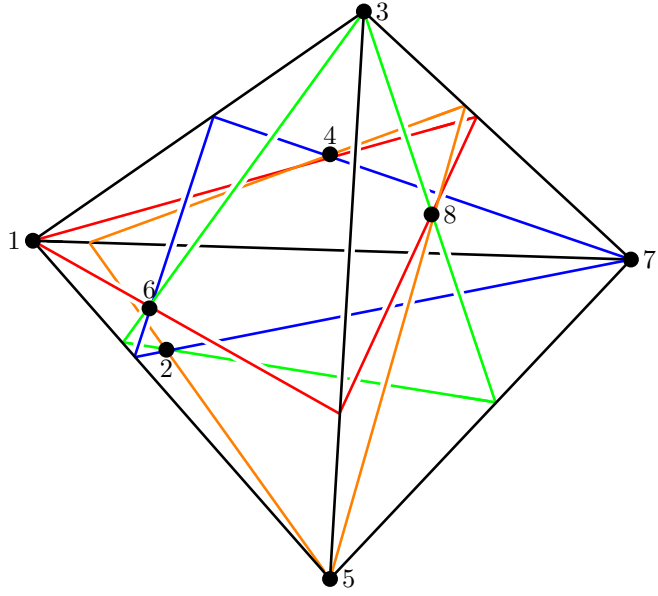
\begin{figure}
  \centering
  \begin{tikzpicture}[rotate=0,scale=0.5,line width=1pt]
    \tikzset{VertexStyle/.append style = {minimum height=5,minimum width=5}}
    \draw (13.4,-1.3) -- (-2.5,-0.8);

    \draw[color=green] (9.8,-5.1) -- (-.1,-3.5);
    \draw[color=orange] (-1,-0.85) -- (5.4,-9.8);
    \draw[color=red] (-2.5,-0.8) -- (9.3,2.5);
    \draw[color=orange] (9,2.8) -- (-1,-0.85);
    \draw[color=white,line width=4pt] (2.3,2.5) -- (0.2,-3.9);
    \draw[color=blue] (0.2,-3.9) -- (13.4,-1.3) -- (2.3,2.5);
    \draw[color=white,line width=4pt] (-.1,-3.5) -- (6.3,5.3) -- (9.8,-5.1);
    \draw[color=green] (-.1+.198,-3.5-.032) -- (-.1,-3.5); 
    \draw[color=green] (9.8,-5.1) -- (9.8-.99,-5.1+.16); 
    \draw[color=blue] (2.3,2.5) -- (0.2,-3.9);

    \draw[color=white,line width=4pt] (5.4,-9.8) -- (6.3,5.3);
    \draw[color=white,line width=4pt] (9.3,2.5) -- (5.65,-5.4) -- (-2.5,-0.8);
    \draw[color=red] (9.3-1.18,2.5-.33) -- (9.3,2.5); 
    \draw[color=white,line width=4pt] (5.4,-9.8) -- (9,2.8);
    \draw[color=orange] (9,2.8) -- (9-1.0,2.8-.365); 
    \draw[color=green] (-.1,-3.5) -- (6.3,5.3) -- (9.8,-5.1);
    \draw (-1.705, -0.825) -- (-2.5,-0.8); 
    \draw[color=red] (9.3,2.5) -- (5.65,-5.4) -- (-2.5,-0.8);
    \draw[color=orange] (5.4,-9.8) -- (9,2.8);
    \draw (-2.5,-0.8) -- (6.3,5.3) -- (13.4,-1.3);
    \draw (5.4,-9.8) -- (6.3,5.3);
    \draw (-2.5,-0.8) -- (5.4,-9.8) -- (13.4,-1.3);

    \Vertex[L=$1$,Lpos=180,LabelOut=true,x=-2.5,y=-0.8]{a1}
    \Vertex[L=$3$,LabelOut=true,x=6.3,y=5.3]{a3}
    \Vertex[L=$5$,LabelOut=true,x=5.4,y=-9.8]{a5}
    \Vertex[L=$7$,LabelOut=true,x=13.4,y=-1.3]{a7}

    \Vertex[L=$2$,Lpos=-90,LabelOut=true,x=1.05,y=-3.7]{a2}
    \Vertex[L=$4$,Lpos=90,LabelOut=true,x=5.4,y=1.5]{a4}
    \Vertex[L=$6$,Lpos=90,LabelOut=true,x=.6,y=-2.6]{a6}
    \Vertex[L=$8$,LabelOut=true,x=8.1,y=-.1]{a8}
  \end{tikzpicture}
  \caption{$\TQ_8$, another excluded minor for $2$-regular matroids.}
  \label{tp8fig}
\end{figure}

There is one more excluded minor for the class, that we now describe.
We denote this matroid $\TQ_8$, and let $E(\TQ_8) = \{0,1,\dotsc,7\}$.
The matroid $\TQ_8$ is a rank-$4$ sparse paving matroid with eight non-spanning circuits
$\big\{\{i, i+2, i+4, i+5\} : i \in \{0,1,\dotsc,7\}\big\}$, working modulo 8.
It is illustrated in \cref{tp8fig}.

\begin{theorem}
  \label{2regexminors}
  The excluded minors for $2$-regular matroids on at most $15$ elements are
$U_{2,6}$, $U_{3,6}$, $U_{4,6}$, $P_6$,
$F_7$, $F_7^*$, $F_7^-$, $(F_7^-)^*$, $F_7^=$, $(F_7^=)^*$,
$\AG(2,3)\ba e$, $(\AG(2,3)\ba e)^*$, $(\AG(2,3)\ba e)^{\Delta Y}$, $P_8$, $P_8^-$, $P_8^=$, and $\TQ_8$.
\end{theorem}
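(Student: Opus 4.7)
The overall strategy mirrors the proof of \cref{dyexprop}, with $\mathbb{D}$ replaced by $\mathbb{U}_2$ and the strong stabilisers $\{F_7^-,(F_7^-)^*,P_8\}$ replaced by $\{U_{2,5},U_{3,5}\}$. By \cref{nou25u35}, it suffices to show that (a) each of the matroids in the statement with a $\{U_{2,5},U_{3,5}\}$-minor is in fact an excluded minor for $\mathcal{M}(\mathbb{U}_2)$, and (b) no other such excluded minor on at most $15$ elements exists. The matroids in (a) are $U_{2,6}$, $U_{3,6}$, $U_{4,6}$, $P_6$, $F_7^=$, $(F_7^=)^*$, $P_8^-$, $P_8^=$, and $\TQ_8$. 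As noted in the discussion preceding the theorem, each of these fails to be $2$-regular (most from non-$\GF(4)$-representability together with \cref{gf4minors}; the remaining cases from the cited results in Semple's thesis and \cite{COvZ2018}, together with an argument for $\TQ_8$), and a finite check on proper minors (all of size at most $7$) confirms each is minor-minimally non-$2$-regular.

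The computational part proceeds inductively for $n = 6, 7, \dotsc, 15$. Using the proxy $(\GF(211),\phi)$ for $\mathbb{U}_2$ from \cref{fig:conff}, and the fact that $U_{2,5}$ and $U_{3,5}$ are strong $\mathbb{U}_2$-stabilisers, we exhaustively generate a representative of each isomorphism class in $\widetilde{\mathcal{M}}_{\{U_{2,5},U_{3,5}\}}(\mathbb{U}_2)$ of size at most $n$, via the methods of \cref{gen-iso}. Assuming all excluded minors on fewer than $n$ elements are known, for $6 \le n \le 12$ we generate all $3$-connected single-element extensions of the $(n-1)$-element matroids in this class, restricting to quaternary extensions once $n \ge 9$ by \cref{onlyquaternary}. \Cref{seysplitcorr} guarantees that every $n$-element excluded minor with a $\{U_{2,5},U_{3,5}\}$-minor appears, up to duality, in this collection. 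For $n \in \{13,14,15\}$, we have $|E(M)| \ge 13$ and $|E(M)| \ge |E(U_{2,5})|+6 = 11$, so \cref{splicinglemma} (combined with \cref{deltaYexc}) implies that every such excluded minor arises, up to duality and $\Delta Y$-equivalence, as a $3$-connected quaternary splice of a pair of $(n-1)$-element matroids in $\widetilde{\mathcal{M}}_{\{U_{2,5},U_{3,5}\}}(\mathbb{U}_2)$ sharing a common $(n-2)$-element deletion in the class.

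At each size $n$, the resulting candidate list is filtered by (i) removing those matroids that are $2$-regular, tested via the $\GF(211)$ proxy, and (ii) removing those that properly contain an already-identified smaller excluded minor, using the cached minor-checking of \cref{sec-minor}. Any matroid surviving both filters is an $n$-element excluded minor with a $\{U_{2,5},U_{3,5}\}$-minor. Running the procedure for $n$ up to $15$ produces exactly the matroids listed in the theorem and no others.

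The principal obstacle is computational scale. The proxy field $\GF(211)$ is an order of magnitude larger than the $\GF(11)$ used for dyadic matroids, and the number of $3$-connected matroids in $\widetilde{\mathcal{M}}_{\{U_{2,5},U_{3,5}\}}(\mathbb{U}_2)$ grows far more rapidly than in the dyadic case; the $n=15$ step in particular produces candidate sets of matroids too large to hold in memory at once. Carrying this through within the available resources depends on batched isomorph filtering via raw-hashed invariants (\cref{sec-iso}), caching of minor tests keyed by isomorphism class (\cref{sec-minor}), and the splicing reduction of \cref{sec-splic}, which is what makes the extension-based enumeration of potential excluded minors at $n=15$ tractable.
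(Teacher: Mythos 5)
Your proposal is correct and follows essentially the same approach as the paper: reduce to matroids with a $\{U_{2,5},U_{3,5}\}$-minor via \cref{nou25u35}, exhaustively generate candidates as (quaternary, for $n\ge 9$) single-element extensions of smaller matroids in $\widetilde{\mathcal{M}}_{\{U_{2,5},U_{3,5}\}}(\mathbb{U}_2)$ for small $n$ and as splices for large $n$, then filter out the $2$-regular matroids and those containing a smaller known excluded minor. The only deviation is the cutover point: the paper uses extensions through $n=13$ and splices only for $n\in\{14,15\}$, whereas you splice from $n=13$ onward, which is still legitimate since the hypotheses of \cref{splicinglemma} ($|E(M)|\ge 13$ and $|E(M)|\ge |E(U_{2,5})|+6=11$) are already met at $n=13$.
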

\begin{proof}
  We exhaustively generated all $n$-element $2$-regular matroids with a $\{U_{2,5},U_{3,5}\}$-minor for $n \le 15$; see \cref{2regtable}.

  By \cref{nou25u35}, any excluded minor has at least six elements.
  Let $6 \le n \le 15$, and suppose all excluded minors for $2$-regular matroids on fewer than $n$ elements are known.
  For $6 \le n \le 8$, we generated all single-element extensions of some $(n-1)$-element $2$-regular matroid with a $\{U_{2,5},U_{3,5}\}$-minor.
  By \cref{nou25u35,seysplitcorr}, if $M$ is an $n$-element excluded minor not listed in \cref{nou25u35}(ii), then this collection of generated matroids contains at least one of $M$ and $M^*$.
 %
  For $8 < n \le 13$, we generated all matroids that are quaternary single-element extensions of some $(n-1)$-element $2$-regular matroid with a $\{U_{2,5},U_{3,5}\}$-minor.
  For each of these potential excluded minors, we filtered out any matroids in the list of generated $2$-regular matroids, or any matroid containing, as a minor, one of the excluded minors for $2$-regular matroids on fewer than $n$ elements.
  Any matroid remaining after this process is an excluded minor.
  On the other hand, if $M$ is an $n$-element excluded minor not listed in \cref{nou25u35}(ii), then, by \cref{nou25u35,onlyquaternary,seysplitcorr}, the collection of generated potential excluded minors contains at least one of $M$ and $M^*$.

  Finally, let $n \in \{14,15\}$.
  We generated all $3$-connected quaternary 
  splices of a (not-necessarily non-isomorphic) pair of $(n-1)$-element $2$-regular matroids that are each single-element extensions of an $(n-2)$-element $3$-connected $2$-regular matroid with a $\{U_{2,5},U_{3,5}\}$-minor; call this collection of generated matroids $\mathcal{S}$.
  By \cref{splicinglemma}, if $M$ is an $n$-element excluded minor not listed in \cref{nou25u35}(ii), then, for some $M' \in \Delta^{(*)}(M)$, there exists a pair $\{e,f\} \subseteq E(M')$ such that $M' \ba e$, $M' \ba f$, and $M' \ba \{e,f\}$ are $3$-connected and have a $\{U_{2,5},U_{3,5}\}$-minor.
  Thus $M' \in \mathcal{S}$.
  (For reference, $\mathcal{S}$ consisted of 29383778 pairwise non-isomorphic $15$-element rank-$7$ matroids, and 12949820 pairwise non-isomorphic $15$-element rank-$8$ matroids.)
  As before, for each such potential excluded minor $M'$, we filtered out $M'$ if it is $2$-regular or if it contains, as a minor, any of the excluded minors for $2$-regular matroids on fewer than $n$ elements.
\end{proof}

\Cref{2regtable} records the number of pairwise non-isomorphic $n$-element rank-$r$ matroids that are $2$-regular but not near-regular, for $n \le 15$.
Note that the two $10$-element $2$-regular matroids of rank-$3$ are the maximum-sized $2$-regular matroids known as $T_3^2$ and $S_{10}$ \cite{Semple1998}.

\begin{table}[ht]
  \begin{tabular}{r|r r r r r r r r r r r}
    \hline
$r \ba n$ & 5 & 6 & 7 &  8 &   9 &  10 &   11 &    12 &     13 &      14 &      15 \\
    \hline
        2 & 1 &   &   &    &     &     &      &       &        &         &         \\
        3 & 1 & 1 & 2 &  4 &   3 &   2 &      &       &        &         &         \\
        4 &   &   & 2 & 17 &  62 & 113 &  132 &    89 &     45 &      14 &       5 \\
        5 &   &   &   &  4 &  62 & 502 & 2156 &  5357 &   8337 &    8685 &    6338 \\
        6 &   &   &   &    &   3 & 113 & 2156 & 18593 &  88191 &  258318 &  511593 \\
        7 &   &   &   &    &     &   2 &  132 &  5357 &  88191 &  732667 & 3637691 \\
        8 &   &   &   &    &     &     &      &    89 &   8337 &  258318 & 3637691 \\
        9 &   &   &   &    &     &     &      &       &     45 &    8685 &  511593 \\
       10 &   &   &   &    &     &     &      &       &        &      14 &    6338 \\
       11 &   &   &   &    &     &     &      &       &        &         &       5 \\
       \hline                                                    
    Total & 2 & 1 & 4 & 25 & 130 & 732 & 4576 & 29486 & 193146 & 1266701 & 8311254 \\
       \hline
  \end{tabular}
  \caption{The number of $3$-connected $2$-regular $n$-element rank-$r$ matroids with a $\{U_{2,5},U_{3,5}\}$-minor, for $n \le 15$.}
  \label{2regtable}
\end{table}

We conjecture that there are no excluded minors for the class of $2$-regular matroids on more than 15 elements.
\begin{conjecture}
  \label{2regexminorconj}
  A matroid $M$ is $2$-regular if and only if $M$ has no minor isomorphic to
$U_{2,6}$, $U_{3,6}$, $U_{4,6}$, $P_6$,
$F_7$, $F_7^*$, $F_7^-$, $(F_7^-)^*$, $F_7^=$, $(F_7^=)^*$,
$\AG(2,3)\ba e$, $(\AG(2,3)\ba e)^*$, $(\AG(2,3)\ba e)^{\Delta Y}$, $P_8$, $P_8^-$, $P_8^=$, and $\TQ_8$.
\end{conjecture}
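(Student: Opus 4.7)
The plan is to reduce the conjecture to a bound on the size of any excluded minor, and then invoke \cref{2regexminors}. Since \cref{2regexminors} already enumerates all excluded minors on at most $15$ elements, it suffices to show that any excluded minor $M$ for the class of $2$-regular matroids has $|E(M)| \le 15$. The conjecture then follows immediately.

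By \cref{nou25u35}, for $|E(M)| \ge 9$ we may assume $M$ has a $\{U_{2,5},U_{3,5}\}$-minor $N$, and by \cref{onlyquaternary}, $M$ is quaternary. Hence $M$ has a $\GF(4)$-representation $A$, which via \cref{homomorphisms} coincides with the image of any would-be $\mathbb{U}_2$-representation of $M$ under the natural homomorphism $\mathbb{U}_2 \rightarrow \GF(4)$. Since each of $U_{2,5}$ and $U_{3,5}$ is a strong $\mathbb{U}_2$-stabilizer, any $\mathbb{U}_2$-representation of the $N$-minor extends (essentially uniquely, up to scaling) toward a $\mathbb{U}_2$-representation of $M$; understanding precisely where this extension fails is the main representation-theoretic obstacle.

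Next I would apply \cref{detachthm} to the pair $(M,N)$: either $M$ (or a $\Delta Y$-equivalent matroid) admits an $N$-detachable pair, or $M$ contains a \spikey. The second case is immediately ruled out for $|E(M)| \ge 13$, since \cref{spikeys} then forces $|E(M)| \le |E(N)| + 5 = 10$. In the first case, an inductive argument is not quite enough on its own: one must analyze the \emph{fragile} elements of $M$ relative to $\{U_{2,5},U_{3,5}\}$, namely those whose deletion or contraction destroys either $3$-connectivity or the $N$-minor, and bound their number in terms of the representation-theoretic obstruction between $A$ and the lifted $\mathbb{U}_2$-representation over the $N$-minor.

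The hard part is this structural bound, which is precisely the content of the announced Brettell--Oxley--Semple--Whittle theorem~\cite{BOSW2023a,BOSW2023b}. That work culminates the program spanning \cite{bww3,BCOSW2018,CMWvZ2015,CCCMWvZ2013,CCMvZ2015}, and proceeds by a detailed classification of $3$-connected matroids that are fragile with respect to $\{U_{2,5},U_{3,5}\}$, together with a finite case analysis of how the $\mathbb{U}_2$-obstruction can be distributed among these fragile elements. The outcome is the bound $|E(M)| \le 15$ for every excluded minor $M$; combining this with \cref{2regexminors} yields the conjecture.
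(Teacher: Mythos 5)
The statement you have been asked to prove is stated in the paper as a \emph{conjecture}, precisely because the paper itself contains no proof of it; the authors only remark in the introduction that it would follow by combining \cref{2regexminors} with the announced, as-yet-unpublished bound of Brettell, Oxley, Semple and Whittle~\cite{BOSW2023a,BOSW2023b} that every excluded minor for the class of $2$-regular matroids has at most $15$ elements. Your proposal correctly identifies that this size bound is the only missing ingredient, but it does not establish it: at the decisive moment you write that ``the hard part is this structural bound, which is precisely the content of the announced Brettell--Oxley--Semple--Whittle theorem'' and simply cite that unpublished work. That is a restatement of the paper's own introductory remark, not a proof. The intermediate machinery you invoke does not close the gap either: \cref{detachthm} and \cref{spikeys} together yield \cref{splicinglemma}, which is a tool for exhaustively \emph{generating} candidate excluded minors of a fixed size $n$ (given the excluded minors of smaller size), and it gives no upper bound on $|E(M)|$; the induction you gesture at would have to run over all $n$ and therefore cannot terminate without the fragility analysis you defer.

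Two smaller points. First, your appeal to \cref{homomorphisms} for ``any would-be $\mathbb{U}_2$-representation of $M$'' is confused: an excluded minor $M$ has no $\mathbb{U}_2$-representation at all, so there is nothing to push forward; the relevant object in excluded-minor arguments of this type is the (unique up to equivalence) extension of a representation of the stabilizer minor, which fails to represent $M$. Second, even the claim that the eventual outcome of \cite{BOSW2023a,BOSW2023b} is the bound $|E(M)|\le 15$ is something you take on faith from the paper's introduction. In short, there is no route within the paper's results to a proof of \cref{2regexminorconj}, and your proposal does not supply one.
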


\begin{table}[htb]
  \begin{tabular}{ c c c c }
    \hline
    $M$ & $\mathbb{P}_M$ & $\max\{i : M \in \mathcal{M}(\mathbb{H}_i)\}$ & $\left|\Delta(M)\right|$ \\ 
    \hline
    $U_{2,6}$   & $\mathbb{U}_3$        & 6  & 3 \\
    $U_{3,6}$   & $\mathbb{P}_{U_{3,6}}$& 6  & 1 \\ 
    $F_7$       & $\GF(2)$              & -- & 2 \\
    $F_7^-$     & $\mathbb{D}$          & 2  & 2 \\
    $F_7^=$     & $\mathbb{K}_2$        & 2  & 2 \\ 
$\AG(2,3)\ba e$ & $\mathbb{S}$          & -- & 3 \\
    $P_8$       & $\mathbb{D}$          & 2  & 1 \\
    $P_8^-$     & $\mathbb{K}_2$        & 2  & 1 \\ 
    $P_8^=$     & $\mathbb{H}_4$        & 4  & 1 \\
    $\TQ_8$     & $\mathbb{K}_2$        & 2  & 1 \\ 
    \hline
  \end{tabular}
  \caption{The excluded minors for $2$-regular matroids on at most 15 elements, their universal partial fields, and how many inequivalent $\GF(5)$-representations they have. 
    We list one representative~$M$ of each $\Delta Y$-equivalence class $\Delta(M)$.%
  }
  \label{u2upfs}
\end{table}

We also calculated the universal partial fields for each excluded minor for the class of $2$-regular matroids, as shown in \cref{u2upfs}.  The only as-yet-undefined partial field is:
\begin{multline*}
  \mathbb{P}_{U_{3,6}} = (\mathbb{Q}(\alpha,\beta,\gamma,\delta), \langle-1, \alpha, \beta, \gamma, \delta, \alpha-1, \beta-1, \gamma-1, \delta-1, \\ \alpha-\beta, \gamma-\delta, \beta-\delta, \alpha-\gamma, \alpha\delta-\beta\gamma, \alpha\delta-\beta\gamma-\alpha+\beta+\gamma-\delta\rangle),
\end{multline*}
where $\alpha$, $\beta$, $\gamma$, and $\delta$ are indeterminates.
Note that there are no partial-field homomorphisms from $\mathbb{U}_3$ or $\mathbb{H}_4$ to $\GF(4)$, from $\mathbb{D}$ to fields of characteristic two, or from $\mathbb{S}$ to $\GF(5)$.
Thus, of the 17 matroids appearing in \cref{2regexminors} (and \cref{u2upfs}), all but $U_{3,6}$, $F_7^=$, $(F_7^=)^*$, $P_8^-$ and $\TQ_8$ are not representable over either $\GF(4)$ or $\GF(5)$.
On the other hand, 
we have the following:


\begin{lemma}
  \label{special4guys}
  The matroids $U_{3,6}$, $F_7^=$, $(F_7^=)^*$, $P_8^-$ and $\TQ_8$ are
  $\mathbb{K}_2$-representable, and
  representable over all fields of size at least four.
\end{lemma}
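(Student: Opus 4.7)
The plan is to reduce the lemma to showing $\mathbb{K}_2$-representability for each of the five matroids, since $\mathbb{K}_2$-representability implies representability over every field of size at least four (as noted in \cref{prelims}). For the three matroids $F_7^=$, $P_8^-$, and $\TQ_8$, \cref{u2upfs} records $\mathbb{K}_2$ as their universal partial field, so $\mathbb{K}_2$-representability is immediate from the definition of the universal partial field (applied to the identity homomorphism). For $(F_7^=)^*$, I would invoke closure of $\mathbb{K}_2$-representability under duality: given a reduced $\mathbb{K}_2$-representation $A$ of $F_7^=$, the transpose $A^{\top}$ is a reduced $\mathbb{K}_2$-representation of $(F_7^=)^*$.

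The one substantive case is $U_{3,6}$. The plan is to exhibit an explicit $3\times 3$ $\mathbb{K}_2$-matrix whose entries, nine $2\times 2$ minors, and determinant are all units of $\mathbb{K}_2$; by \cref{pmatroid}, together with the observation that every $3$-subset of a $6$-element ground set is a basis of $U_{3,6}$, any such matrix represents $U_{3,6}$. Equivalently, by \cref{homomorphisms}, it suffices to map the indeterminates $\alpha,\beta,\gamma,\delta$ of $\mathbb{P}_{U_{3,6}}$ to elements of $\mathbb{K}_2$ such that each of the fourteen generators listed in the definition of $\mathbb{P}_{U_{3,6}}$ maps to a unit. Writing $t$ for the indeterminate of $\mathbb{K}_2$, a natural candidate is the assignment $(\alpha, \beta, \gamma, \delta) = (t,\, t+1,\, t^{-1},\, (t+1)t^{-1})$, corresponding to the reduced representation
\[
A = \begin{bmatrix} 1 & 1 & 1 \\ 1 & t & t+1 \\ 1 & t^{-1} & (t+1)t^{-1} \end{bmatrix}.
\]

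The main obstacle is choosing the bottom rows of $A$ so that \emph{all} fourteen subdeterminant expressions simultaneously factor into units of $\mathbb{K}_2$; many natural symmetric choices fail because expressions like $\alpha\delta - \beta\gamma$ or $\alpha\delta - \beta\gamma - \alpha + \beta + \gamma - \delta$ tend to introduce irreducible factors such as $t^2+1$ or $t^2 - t - 1$ that are not units of $\mathbb{K}_2$. For the candidate above, however, direct computation gives $\det A = (t^2-1)/t$ and each of the other thirteen generators of $\mathbb{P}_{U_{3,6}}$ simplifies to a product of $\pm 1, t, t-1, t+1$ and their inverses, so once the assignment is in hand the verification is elementary.
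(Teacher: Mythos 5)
Your proposal is correct and follows essentially the same route as the paper: reduce to $\mathbb{K}_2$-representability (which gives representability over all fields of size at least four), invoke the universal-partial-field computations in \cref{u2upfs} for $F_7^=$, $P_8^-$ and $\TQ_8$ (with duality handling $(F_7^=)^*$), and exhibit a partial-field homomorphism $\mathbb{P}_{U_{3,6}}\rightarrow\mathbb{K}_2$ for $U_{3,6}$. Your particular assignment $(\alpha,\beta,\gamma,\delta)\mapsto(t,\,t+1,\,t^{-1},\,(t+1)t^{-1})$ differs from the paper's choice but checks out: all fourteen generators, including $\alpha\delta-\beta\gamma-\alpha+\beta+\gamma-\delta$, map to units of $\mathbb{K}_2$.
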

\begin{proof}
  It suffices to show that each of these matroids is $\mathbb{K}_2$-representable, and
  this follows directly from the universal partial fields calculations given in \cref{u2upfs}.

  Alternatively,
  observe that
$$\begin{bmatrix}
  1 & 1 & 1 \\
  1 & \alpha & \beta \\
  1 & \gamma & \delta \\
\end{bmatrix}$$
is a 
$\mathbb{P}_{U_{3,6}}$-representation of $U_{3,6}$, and 
let $\phi : \mathbb{P}_{U_{3,6}} \rightarrow \mathbb{K}_2$ be given by $\phi(\alpha) = -\alpha$, $\phi(\beta) = -1/\alpha$, $\phi(\gamma) = (\alpha-1)/\alpha$, $\phi(\delta) = 1-\alpha$.
It is easily verified that $\phi$ is a partial-field homomorphism.
%
It is also easy to check that the following are reduced
$\mathbb{K}_2$-representations for $F_7^=$, $\TQ_8$, and $P_8^-$, respectively (labelled as in \cref{fanosfig,tp8fig,p8fig}, where for $P_8^-$, we relax $\{e,f,g,h\}$).

\begin{multicols}{2}
$$\kbordermatrix{
  & d & e & f & g \\
a & 1 & 1 & 0 & 1 \\
b & 1 & 0 & 1 & 1 \\
c & 0 & 1 & \alpha & 1
}$$
$$\kbordermatrix{
  & 8 & 6 & 4 & 2\\
1 & 0 & \alpha & 1 & 1\\
7 & 1 & 0 & \alpha & \alpha - 1\\
5 & 1 & \alpha & 0 & \alpha \\
3 & 1 & \alpha - 1 & 1 & 0
}$$
\end{multicols}
$$\kbordermatrix{
  & d & e & g & h\\
a & 1 & 1 & 1 & \alpha+1\\
b & 1 & 0 & \alpha+1 & \alpha+1 \\
c & 1 & -\alpha & 1 & 0\\
f & 0 & 1 & 1 & 1
}$$
%
%
%
\end{proof}


\begin{corollary}
  Let $M$ be an excluded minor for the class of matroids representable over all fields of size at least four.
  Suppose that \cref{2regexminorconj} holds, or $|E(M)| \le 15$.
  Then, either
  \begin{enumerate}
    \item $M$ has a proper $\{U_{3,6}, F_7^=, (F_7^=)^*, P_8^-, \TQ_8\}$-minor, or
    \item $M$ is isomorphic to one of $U_{2,6}$, $U_{4,6}$, $P_6$, $F_7$, $F_7^*$, $F_7^-$, $(F_7^-)^*$, $\AG(2,3)\ba e$, $(\AG(2,3)\ba e)^*$, $(\AG(2, 3)\ba e)^{\Delta Y}$, $P_8$, and $P_8^=$.
  \end{enumerate}
\end{corollary}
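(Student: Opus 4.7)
The plan is to reduce directly to the classification of excluded minors for $2$-regular matroids in \cref{2regexminors}, which is available either by hypothesis (when $|E(M)| \le 15$) or by assuming \cref{2regexminorconj}. Let $\mathcal{C}$ denote the class of matroids representable over every field of size at least four. Since every $2$-regular matroid lies in $\mathcal{C}$ (as noted in the preliminaries, via Semple's result), any excluded minor $M$ for $\mathcal{C}$ fails to be in $\mathcal{C}$, and hence is not $2$-regular. Therefore $M$ has some minor $N$ drawn from the list of $17$ excluded minors for $2$-regular matroids.

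The key step is a case split on whether $N$ is a proper minor of $M$ or equal to $M$. If $N$ is a proper minor of $M$, then since every proper minor of an excluded minor for $\mathcal{C}$ lies in $\mathcal{C}$, we have $N \in \mathcal{C}$. I would then identify precisely which of the $17$ candidates lie in $\mathcal{C}$: \cref{special4guys} places $U_{3,6}$, $F_7^=$, $(F_7^=)^*$, $P_8^-$, and $\TQ_8$ in $\mathcal{C}$, and the remaining twelve are not in $\mathcal{C}$ by representability facts already recorded in this paper (namely: $U_{2,6}$, $U_{4,6}$, $P_6$, $F_7^-$, $(F_7^-)^*$, $P_8$, and $P_8^=$ fail to be $\GF(4)$-representable; $F_7$ and $F_7^*$ are only representable in characteristic two and so fail over $\GF(5)$; and $\AG(2,3)\ba e$, its dual, and its $\Delta Y$-equivalent relative all fail to be $\GF(5)$-representable, the last by \cref{deltaYrep}). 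This confirms that (i) holds in this case.

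In the remaining case, $N = M$, so $M$ is itself one of the $17$ excluded minors for $2$-regular matroids. Since $M \notin \mathcal{C}$, the same partition of the $17$ matroids into those in $\mathcal{C}$ and those outside $\mathcal{C}$ forces $M$ to be one of the twelve listed in (ii). Conversely, each matroid in (i) or (ii) is genuinely an excluded minor for $\mathcal{C}$: its proper minors are $2$-regular, hence in $\mathcal{C}$, while the matroid itself is not in $\mathcal{C}$, so this completes the argument.

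The only real obstacle is the bookkeeping that partitions the $17$ excluded minors for $2$-regular matroids into those lying in $\mathcal{C}$ and those not lying in $\mathcal{C}$; this is essentially handed to us by \cref{special4guys} together with the universal-partial-field data in \cref{u2upfs}. Everything else is a routine consequence of the definition of an excluded minor combined with the containment of the class of $2$-regular matroids in $\mathcal{C}$.
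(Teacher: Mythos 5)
Your proposal is correct and follows essentially the same route as the paper, which leaves the corollary as an immediate consequence of the surrounding discussion: the containment of the $2$-regular matroids in the class, \cref{2regexminors} (or \cref{2regexminorconj}), and the partition of the $17$ excluded minors into the five that are representable over all fields of size at least four (\cref{special4guys}) and the twelve that fail over $\GF(4)$ or $\GF(5)$. Your closing ``converse'' paragraph is unnecessary for the statement as given (the corollary does not assert that the matroids in (ii) are themselves excluded minors for the class), but it is harmless and in fact correct.
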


Finally, we remark on the number of inequivalent $\GF(5)$-representations that the excluded minors for $2$-regular matroids possess.
As there is a partial-field homomorphism from $\mathbb{U}_3$ to $\mathbb{H}_5$~\cite{vanZwam2009}, and
$\phi : \mathbb{P}_{U_{3,6}} \rightarrow \mathbb{U}_3$ given by $\phi(\alpha) = \frac{\alpha-1}{\alpha}$, $\phi(\beta) = \frac{\gamma-1}{\gamma}$, $\phi(\gamma) = \frac{1-\alpha}{\beta-\alpha}$, and $\phi(\delta) = \frac{1-\gamma}{\beta-\gamma}$
is a partial-field homomorphism, the matroids $U_{2,6}$ and $U_{3,6}$ have precisely six inequivalent $\GF(5)$-representations.
For $\mathbb{P} \in \{\mathbb{D}, \mathbb{K}_2\}$, there is a partial-field homomorphism from $\mathbb{P}$ to $\mathbb{H}_2$ but none from $\mathbb{P}$ to $\mathbb{H}_3$~\cite{vanZwam2009}, so $F_7^-$, $F_7^=$, $P_8$, $P_8^-$, and $\TQ_8$ have precisely two inequivalent $\GF(5)$-representations.
As the universal partial field of $P_8^=$ is $\mathbb{H}_4$, the matroid $P_8^=$ has precisely four inequivalent $\GF(5)$-representations.

\bibliographystyle{abbrv}
\bibliography{lib}

\begin{thebibliography}{10}

\bibitem{AO1993}
S.~Akkari and J.~Oxley.
\newblock Some local extremal connectivity results for matroids.
\newblock {\em Combinatorics, Probability and Computing}, 2(04):367--384, 1993.

\bibitem{AM1969}
M.~F. Atiyah and I.~G. Macdonald.
\newblock {\em Introduction to commutative algebra}.
\newblock Addison-Wesley Publishing Co., Reading, Mass.-London-Don Mills, Ont.,
  1969.

\bibitem{BL21}
M.~Baker and O.~Lorscheid.
\newblock Lift theorems for representations of matroids over pastures.
\newblock arXiv:2107.00981, 2021.

\bibitem{BL21b}
M.~Baker and O.~Lorscheid.
\newblock The moduli space of matroids.
\newblock {\em Advances in Mathematics}, 390:107883, 2021.

\bibitem{BL20}
M.~Baker and O.~Lorscheid.
\newblock Foundations of matroids, part 1: Matroids without large uniform
  minors.
\newblock {\em Memoirs of the AMS}, to appear.

\bibitem{BCOSW2018}
N.~Brettell, B.~Clark, J.~Oxley, C.~Semple, and G.~Whittle.
\newblock Excluded minors are almost fragile.
\newblock {\em Journal of Combinatorial Theory, Series B}, 140:263--322, 2020.

\bibitem{BOSW2023a}
N.~Brettell, J.~Oxley, C.~Semple, and G.~Whittle.
\newblock Excluded minors are almost fragile {II}: essential elements.
\newblock {\em Journal of Combinatorial Theory, Series B}, 163:272--307, 2023.

\bibitem{BOSW2023b}
N.~Brettell, J.~Oxley, C.~Semple, and G.~Whittle.
\newblock The excluded minors for $2$- and $3$-regular matroids.
\newblock {\em Journal of Combinatorial Theory, Series B}, 163:133--218, 2023.

\bibitem{bww3}
N.~Brettell, G.~Whittle, and A.~Williams.
\newblock {$N$}-detachable pairs in $3$-connected matroids~{III}: the theorem.
\newblock {\em Journal of Combinatorial Theory, Series B}, 153:223--290, 2022.

\bibitem{CCCMWvZ2013}
C.~Chun, D.~Chun, B.~Clark, D.~Mayhew, G.~Whittle, and S.~H.~M. van Zwam.
\newblock {Computer-verification of the structure of some classes of fragile
  matroids}.
\newblock arXiv:2206.13036, 2013.

\bibitem{CCMvZ2015}
C.~Chun, D.~Chun, D.~Mayhew, and S.~H.~M. van Zwam.
\newblock {Fan-extensions in fragile matroids.}
\newblock {\em Electronic Journal of Combinatorics}, 22(2):2.30, 2015.

\bibitem{CMWvZ2015}
B.~Clark, D.~Mayhew, S.~H.~M. van Zwam, and G.~Whittle.
\newblock {The structure of $\{U_{2,5}, U_{3,5}\}$-fragile matroids}.
\newblock {\em SIAM Journal on Discrete Mathematics}, 30(3):1480--1508, 2016.

\bibitem{COvZ2018}
B.~Clark, J.~Oxley, and S.~H.~M. van Zwam.
\newblock {Relaxations of GF$(4)$-representable matroids}.
\newblock {\em The Electronic Journal of Combinatorics}, 25(2):2.53, 2018.

\bibitem{GOVW1998}
J.~Geelen, J.~Oxley, D.~Vertigan, and G.~Whittle.
\newblock {Weak Maps and Stabilizers of Classes of Matroids}.
\newblock {\em Advances in Applied Mathematics}, (21):305--341, 1998.

\bibitem{GGK2000}
J.~F. Geelen, A.~M.~H. Gerards, and A.~Kapoor.
\newblock {The excluded minors for GF$(4)$-representable matroids}.
\newblock {\em Journal of Combinatorial Theory, Series B}, 79(2):247--299,
  2000.

\bibitem{HMvZ2011}
R.~Hall, D.~Mayhew, and S.~H.~M. van Zwam.
\newblock {The excluded minors for near-regular matroids}.
\newblock {\em European Journal of Combinatorics}, 32(6):802--830, 2011.

\bibitem{Kung1990}
J.~P. Kung.
\newblock Combinatorial geometries representable over {GF}$(3)$ and {GF}$(q)$.
  {I. The} number of points.
\newblock {\em Discrete \& Computational Geometry}, 5(1):83--95, 1990.

\bibitem{Kung1988}
J.~P. Kung and J.~G. Oxley.
\newblock {Combinatorial geometries representable over GF$(3)$ and GF$(q)$. II.
  Dowling geometries}.
\newblock {\em Graphs and Combinatorics}, 4(1):323--332, 1988.

\bibitem{MR2008}
D.~Mayhew and G.~F. Royle.
\newblock Matroids with nine elements.
\newblock {\em Journal of Combinatorial Theory, Series B}, 98(2):415--431,
  2008.

\bibitem{oxley}
J.~Oxley.
\newblock {\em {Matroid Theory}}, volume~21 of {\em Oxford Graduate Texts in
  Mathematics}.
\newblock Oxford University Press, New York, second edition, 2011.

\bibitem{OSV2000}
J.~Oxley, C.~Semple, and D.~Vertigan.
\newblock Generalized {$\Delta$-$Y$} exchange and $k$-regular matroids.
\newblock {\em Journal of Combinatorial Theory, Series B}, 79(1):1--65, 2000.

\bibitem{PvZ2010b}
R.~A. Pendavingh and S.~H.~M. van Zwam.
\newblock {Confinement of matroid representations to subsets of partial
  fields}.
\newblock {\em Journal of Combinatorial Theory, Series B}, 100(6):510--545,
  2010.

\bibitem{PvZ2010a}
R.~A. Pendavingh and S.~H.~M. van Zwam.
\newblock {Lifts of matroid representations over partial fields}.
\newblock {\em Journal of Combinatorial Theory, Series B}, 100(1):36--67, 2010.

\bibitem{Semple1998}
C.~A. Semple.
\newblock {\em {$k$-Regular Matroids}}.
\newblock {Ph.D.}, Victoria University of Wellington, 1998.

\bibitem{Seymour1980}
P.~D. Seymour.
\newblock {Decomposition of regular matroids}.
\newblock {\em Journal of Combinatorial Theory, Series B}, 28(3):305--359,
  1980.

\bibitem{vanZwam2009}
S.~H.~M. van Zwam.
\newblock {\em {Partial Fields in Matroid Theory}}.
\newblock {Ph.D.}, Eindhoven University of Technology, 2009.

\end{thebibliography}

\end{document}